\newcommand*{\AN}[1]{(#1)}
\newcommand*{\BL}[1]{[#1]}
\newenvironment{proof}[1][Proof:]{\begin{trivlist} 
 		\item[\hskip \labelsep {\bfseries #1}]}{\end{trivlist}} 
\newcommand{\qed}{\nobreak \ifvmode \relax \else \ifdim\lastskip<1.5em \hskip-\lastskip \hskip1.5em plus0em minus0.5em \fi \nobreak \vrule height0.75em width0.5em depth0.25em\fi} 
\def\0{\bf \0}
\def\A{{\bf A}}
\def\D{{\bf D}}
\def\H{{\bf H}}
\def\I{{\bf I}}
\def\0{{\bf 0}}
\def\R{\mathbb{R}}
\def\S{{\bf S}}
\def\T{{\bf T}}
\def\X{{\bf X}}
\def\Z{{\bf Z}}
\def\a{{\bf a}}
\def\b{{\bf b}}
\def\c{{\bf c}}
\def\d{{\bf d}}
\def\e{{\bf e}}
\def\g{{\bf g}}
\def\p{{\bf p}}
\def\q{{\bf q}}
\def\r{{\bf r}}
\def\s{{\bf s}}
\def\v{{\bf v}}
\def\w{{\bf w}}
\def\x{{\bf x}}
\def\y{{\bf y}}
\def\z{{\bf z}}
\def\Tr{{\rm T}}
\def\T{{\rm T}}
\def\diag{{\rm diag}}
\newtheorem{algorithm}{Algorithm}[section]
\newtheorem{theorem}{Theorem}[section]
\newtheorem{lemma}{Lemma}[section]
\newtheorem{proposition}{Proposition}[section]
\newtheorem{remark}{Remark}[section]
\begin{document}

\title{A polynomial time infeasible interior-point arc-search algorithm for convex optimization}

\author{Yaguang Yang}

%\date{Received: date / Accepted: date}
% The correct dates will be entered by the editor

\maketitle

\begin{abstract}
This paper proposes an infeasible interior-point algorithm for
the convex optimization problem using arc-search techniques. 
The proposed algorithm simultaneously selects the centering 
parameter and the step size, aiming at optimizing the 
performance in every iteration. Analytic formulas for the 
arc-search are provided to make the arc-search method 
very efficient. The convergence of the algorithm is proved 
and a polynomial bound of the algorithm is established.
The preliminary numerical test results indicate that the algorithm
is efficient and effective.

\vspace{0.2in}
\noindent
{\bf keywords:} Infeasible interior-point algorithm; Arc-search; Convex optimization

\vspace{0.1in}
\noindent
{MSC class:  90C51 90C25}
\end{abstract}

\section{ Introduction}
Because of the great success of the interior-point methods for linear 
programming (LP) problems \cite{wright97}, the methods
have been extended to more optimization problems, such
as linear complementarity problem \cite{kmy89}, convex
quadratic optimization problem \cite{ma89}, 
semidefinite programming problem \cite{aliz91}, convex 
nonlinear optimization problem \cite{ye87}, and non-convex
nonlinear programming problem \cite{ettz96} and many
references therein.

There are two types of the interior-point methods based on the 
property of the starting points of the algorithm. The
``feasible'' interior-point method starts with a feasible initial point
and is much easier to analyze the convergence properties but 
it needs an expensive ``phase-I'' process to find a feasible 
staring point. The ``infeasible'' interior-point method
does not need a feasible initial point which is computationally
attractive but its convergence analysis is much more difficult 
and it needs more demanding assumptions in the convergence
analysis \cite{wright97}. For decades, people have realized
\cite{lms91,lms92,yang17} that infeasible interior-point method
is a better strategy than feasible interior-point method for 
LPs if an initial point is not available. 

Another proven strategy of the interior-point methods is to use 
the central path to guide a series iterates to an optimal solution.
Computing the central path of an optimization problem, however,
is very expensive. Most path-following type interior-point 
algorithms use line segment to approximate the central path and
search the optimizer along this line segment. Clearly, this is not a
good strategy because the central path is a curve. Therefore, 
this author proposed an arc-search technique for interior-point 
method for LPs \cite{yang09}. The main idea in the arc-search 
technique is to efficiently and robustly approximate the central 
path using an arc of part of an ellipse and to search the 
next iterate along the arc. Since the central path is geometrically
a high dimensional curve, the arc can fit the central path better than a line.

Recently, researchers have applied arc-search techniques to 
different optimization problems. For example, an arc-search 
interior-point algorithm proposed in \cite{yy18} shows that 
it has better polynomial bound and its numerical test is more
attractive than a line-search type interior-point algorithm for LPs.
In \cite{yang18}, this author showed that an interior-point algorithm
using the arc-search technique achieves the best polynomial bound
%$\mathcal{O}(\sqrt{n}\log{1/\epsilon})$
for all interior-point methods, feasible or infeasible, and is 
numerically competitive to the well-known Mehrotra’s algorithm.
Researchers have applied the arc-search technique to the
linear complementarity problem~\cite{kheirfam17}, convex 
quadratic programming~\cite{yang11,zhl21}, symmetric 
programming~\cite{ylz17}, semidefinite programming
\cite{zyzlh19,kheirfam21}, and nonlinear programming 
problem \cite{yiy21}. All these results showed that the 
arc-search method performs better than the counterpart,
the line search method.

In this paper, we extend the arc-search techniques to the 
convex nonlinear optimization problem for which various 
line-search interior point algorithms have been developed in 
\cite{aabbdk19,agj00,Bubeck15,cb21,fu08,hertog12,jarre92,kz93,Monteiro94},
because many application problems can be formulated as
a convex nonlinear optimization problem \cite{bv14,llp17,taylor15}.
Although a polynomial bound has been proved for a feasible interior-point
algorithm for the convex nonlinear optimization problem \cite{kz93}, 
to our best knowledge, there is no polynomial bound for infeasible
interior-point algorithms for the convex nonlinear optimization problem 
because the latter is much more difficult \cite{wright97}.
We propose an arc-search infeasible interior-point algorithm for 
the convex nonlinear optimization problem and discuss the 
convergence property. We show that this algorithm
converges under mild conditions in a polynomial bound of
$O(n^{1.5} \log(1/\epsilon))$. 
%\red{To verify the numerical 
%performance of the proposed arc-search algorithm, we conducted 
%numerical experiments on the CUTEst problems~\cite{gould2015cutest}.
%The results showed that the proposed algorithm required 
%fewer iterations than a line-search algorithm. }

The remainder of the paper is organized as follows.
Section~\ref{sec:description} introduces the problem to be discussed.
Section~\ref{sec:algorithm} describes the proposed arc-search
algorithm. Section~\ref{sec:convergence} discusses
its convergence properties. Section \ref{sec:test} contains the
materials about the Matlab implementation and preliminary
numerical test results. Section~\ref{sec:conclusion} 
summarizes the conclusion of the paper. A method about
optimal selection of the centering parameter and the step 
size at the same time is provided in Appendix \ref{sec:selection}. 

\section{Problem description}\label{sec:description}

In the remainder of the paper, we use a superscript $\T$ for the 
transpose of a vector or a matrix, and we use a tuple to denote 
a stacked vectors, for example,
$(\x,\y)$ stands for $[\x^{\T},\y^{\T}]^{\T}$.
For a vector $\x \in \R^n$, we denote by $\X \in \R^{n \times n}$  
a diagonal matrix whose diagonal elements are $\x$,
and by $\min(\x)$ and $\max(\x)$ the minimum and 
maximum values of $\x$ respectively. For two vectors 
$\x \in \R^n$ and $\y \in \R^n$, we use $\x \circ \y \in \R^n$
to denote the element-wise product of $\x$ and $\y$.
Let $\R_+^n$ ($\R_{++}^n$)  denote the space of nonnegative vectors 
(positive vectors, respectively), and $\e$  denote a vector of all ones with 
appropriate dimension. We will use superscript $k$ for the vector 
iteration count and subscript $k$ for the scalar iteration count, 
for example, $\x^k$ is the value of the vector variable $\x$ at
iteration $k$, and $\mu_k$ is the value of the scalar variable $\mu$
at iteration $k$.
% similarly $\bar{\x}$ is the optimal value of the
%vector $\x$ and $\mu_*$ is the optimal value of the variable $\mu$.

We consider the following convex programming problem with linear constraints:
\begin{align}
\begin{array}{rcl}
\min &:& f(\x) \\
\textrm{s.t.} &:& \A_E \x = \b_E, \\
& &  \A_I \x \ge \b_I,
\end{array}
\label{CP}
\end{align}
where $f: \R^n \rightarrow \R$ is a nonlinear convex function of 
$\x \in \R^n$, which is differentiable up to the third order; 
$\A_E \in \R^{m \times  n}$, $\A_I \in \R^{p \times n}$, 
$m<n$, $\b_E  \in \R^{m}$, and $\b_I  \in \R^{p}$ are given 
constant matrices and vectors; and the decision variable vector
is $\x$. We assume that the row of $\A_E$ is full rank, which is 
standard because we can remove dependent rows in finite
operations bounded by a polynomial of $m$ and $n$. 

Following the treatment of \cite{yiy21}, we convert the inequality 
constraints $\A_I \x \ge \b_I$ into equality constraints by introducing 
a slack vector $\s \ge \0$ as follows:
\begin{align}
\begin{array}{rcl}
\min &:&  f(\x)  \\
\textrm{s.t.} &:& \A_E \x = \b_E, \\
& &  \A_I \x - \s = \b_I, \hspace{0.1in} \s \ge \0.
\end{array}
\label{CP1}
\end{align}
Let Lagrangian multipliers of system (\ref{CP1}) be denoted by
$\y \in \R^m, \w \in \R_+^p$ and $\z \in \R_+^p$, and let the tuple 
$\v  = (\x,\y,\w,\s,\z) \in \R^{n+m+3p} $ to represent the decision variables 
and multipliers. Then, the Lagrangian function of (\ref{CP1}) is given by
\begin{equation*}
L(\v)=f(\x)+\y^{\T}(\A_E \x - \b_E)-\w^{\T}( \A_I \x - \s - \b_I)-\z^{\T}\s.
\label{lagrangian1}
\end{equation*}
Hence, we have the gradients of Lagrangian with respect to $\x$ and $\s$
given as follows:
\begin{equation}
\nabla_{\x} L(\v)=\nabla f(\x)+\A_E^{\T} \y-\A_I^{\T}  \w,
\hspace{0.1in} \nabla_{\s} L(\v)=\w-\z.
\label{dlagrangian}
\end{equation}
Let $\mu$ be the duality measure defined as
\begin{equation}
\mu = \frac{\s^{\T} \z}{p}.
\label{mu}
\end{equation}
The KKT conditions of (\ref{CP1}) are
\begin{align}
\g(\v)  =  \0,  \
(\w, \s, \z) \in \R_+^{3p}, 
\label{KKT1}
\end{align}
where $\g:\R^{n+m+3p} \to \R^{n+m+3p}$ is defined by 
\begin{equation}
\g(\v) = \left[ \begin{array}{c}
\nabla_{\x} L(\v)  \\
\A_E \x - \b_E  \\
\A_I \x - \s - \b_I \\
\w-\z  \\
\Z \s
\end{array} \right]
\approx  \left[ \begin{array}{c}
\H \x +\A_E^{\T} \y -\A_I^{\T} \w   \\
\A_E \x - \b_E  \\
\A_I \x - \s - \b_I \\
\w-\z  \\
\Z \s
\end{array} \right]
= 
\left[ \begin{array}{c}
\r_C  \\
\r_E  \\
\r_I \\
\w-\z  \\
p\mu \e
\end{array} \right],
\label{defineG}
\end{equation}
and 
\begin{subequations}
\begin{gather}
\r_C^k=\H \x^k+\A_E^{\T} \y^k-\A_I^{\T}\w^k,\\
\r_E^k=\A_E \x^k -\b_E, \\
\r_I^k=\A_I \x^k - \s^k - \b_I
\end{gather}
\label{residuals}
\end{subequations}
are the approximated residual of the gradient of the Lagrangian function
at $\v^k$ as defined in (\ref{dlagrangian}), the residual 
of the equality constraints, and the residual of the inequality constraints, 
respectively. The last row of (\ref{defineG}) requires that the
iterate follows the central path as closer as possible.

\begin{remark}
Please note that $\r_C^k$ is not defined as a strict (but an 
approximate) residual of the gradient of the Lagrangian function 
at $\v^k$. This modification is for the purpose to obtain a 
convergent algorithm. 
%In addition, if $\r_C^k \rightarrow \0$, 
%since $\H$ is positive definite, the gradient of the Lagrangian 
%function dose go to zero, i..e, the KKT conditiions will hold.
\end{remark}

In view of (\ref{dlagrangian}), we have 
$\nabla_{\x}^2 L(\v)=\nabla_{\x}^2 f(\x):=\H_{\x}$ which is
a positive definite matrix depending on $\x$ because
$f(\x)$ is a nonlinear convex function. To simplify the notation, 
we will write $\H$ for $\H_{\x}$ but $\H$ needs to be updated
in every iteration. The Jacobian of $\g$ is given by
\begin{align*}
\g' (\v) = \left[ \begin{array}{ccccc}
\nabla_{\x}^2 f(\x) & \A_E^{\T} & -\A_I^{\T} & \0 & \0 \\
\A_E &\0 & \0 & \0 & \0  \\
\A_I &\0 & \0 & -\I & \0 \\
\0  & \0  &  \I  &  \0  &  -\I  \\
\0 & \0 & \0  &  \Z & \S 
\end{array} \right]
= \left[ \begin{array}{ccccc}
\H & \A_E^{\T} & -\A_I^{\T} & \0 & \0 \\
\A_E &\0 & \0 & \0 & \0  \\
\A_I &\0 & \0 & -\I & \0 \\
\0  & \0  &  \I  &  \0  &  -\I  \\
\0 & \0 & \0  &  \Z & \S 
\end{array} \right]. 
\label{firstJacobian}
\end{align*}
Let $\a_i$ be the $i$th row of $\A_I$, $b_i$ be the 
$i$th element of $\b_I$, $i \in \{1,\ldots,p\}$, 
$\a_j$ be the $j$th row of $\A_E$,  $b_j$ be the 
$j$th element of $\b_E$, $j \in \{1,\ldots,m\}$.
Let 
\begin{equation*}
I(\x)=\left\{i \in \{1,\ldots,p\} : \a_i \x =b_i \right\}
\label{index}
\end{equation*}
be the index set of active inequality constraints at $\x \in \R^n$.
It is easy to check that the following properties hold for
problem (\ref{CP}).

\begin{proposition}
Assume that (a) $\A_E$ is full rank, (b) the constraints set of system (\ref{CP})
is not empty, (c) $f(\x)$ is differentiable up to the third order and is 
locally Lipschitz continuous at optimal solution $\bar{\x}$, then system 
(\ref{CP}) has the following properties.
\begin{itemize}
\item[(P1)] There exists $\bar{\v}=(\bar{\x},\bar{\y},\bar{\w},\bar{\s},\bar{\z})$, 
an optimal solution and its associate multipliers of (\ref{CP1}), i.e., 
KKT conditions (\ref{KKT1}) has a solution.
\item[(P2)] $\g(\x)$ are differentiable up to the second order. 
In addition, $\g(\x)$ is locally Lipschitz continuous at $\bar{\x}$.
\end{itemize}
\label{prop1}
\hfill \qed
\end{proposition}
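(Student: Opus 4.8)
The plan is to establish the two properties (P1) and (P2) essentially by reducing each to a standard fact and then checking the hypotheses (a)--(c) supply exactly what is needed.

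For (P1), I would argue that the existence of a KKT point for (\ref{CP1}) follows from the existence of an optimal solution $\bar{\x}$ of (\ref{CP}) together with the fact that a suitable constraint qualification holds. The key observations are: first, since all constraints in (\ref{CP}) are affine (linear equalities $\A_E \x = \b_E$ and linear inequalities $\A_I \x \ge \b_I$), the Abadie constraint qualification is automatically satisfied at every feasible point, so the KKT conditions are necessary for optimality; second, assumption (b) guarantees the feasible set is nonempty, and since $f$ is convex and differentiable, under the (implicitly needed) attainment of a minimizer $\bar{\x}$ one obtains multipliers $(\bar{\y},\bar{\w})$ with $\nabla f(\bar{\x}) + \A_E^{\T}\bar{\y} - \A_I^{\T}\bar{\w} = \0$, $\bar{\w} \ge \0$, and complementarity $\bar{\w}^{\T}(\A_I\bar{\x} - \b_I) = 0$. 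Setting $\bar{\s} = \A_I\bar{\x} - \b_I \ge \0$ and $\bar{\z} = \bar{\w}$ then gives a full tuple $\bar{\v} = (\bar{\x},\bar{\y},\bar{\w},\bar{\s},\bar{\z})$ with $(\bar{\w},\bar{\s},\bar{\z}) \in \R_+^{3p}$ satisfying $\g(\bar{\v}) = \0$, which is precisely (\ref{KKT1}). I would also note that assumption (a), $\A_E$ full rank, is what makes the multiplier $\bar{\y}$ well-defined (unique), although it is not strictly needed for mere existence.

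For (P2), the claim is really just a differentiation bookkeeping argument applied to the explicit formula (\ref{defineG}) for $\g(\v)$. The first four block rows of $\g$ are $\nabla_{\x}L(\v) = \nabla f(\x) + \A_E^{\T}\y - \A_I^{\T}\w$, the affine maps $\A_E\x - \b_E$ and $\A_I\x - \s - \b_I$, and the linear map $\w - \z$; the fifth row is the bilinear map $\Z\s$. All but the first row are polynomials (degree $\le 2$) in $\v$, hence $C^\infty$; the first row is $C^2$ because $f$ is assumed differentiable up to the third order, so $\nabla f \in C^2$. Therefore $\g \in C^2$. For the local Lipschitz continuity of $\g$ at $\bar{\x}$ (more precisely at $\bar{\v}$), I would observe that the affine and bilinear blocks are locally Lipschitz everywhere, and the only block that requires an external hypothesis is $\nabla f(\x)$, whose local Lipschitz continuity at $\bar{\x}$ is exactly assumption (c). Combining the blocks, a tuple of locally Lipschitz functions is locally Lipschitz, giving the conclusion.

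The main obstacle — or really the only subtlety worth flagging — is the precise interface between the hypotheses as literally stated and what the proof actually uses. Hypothesis (c) asserts $f$ is locally Lipschitz at $\bar{\x}$, but what is needed for (P2) is that $\nabla f$ (not $f$) is locally Lipschitz; I would interpret (c) as intended to say $\nabla f$ is locally Lipschitz at $\bar{\x}$, which is the natural reading given that $f \in C^1$ already makes $f$ itself locally Lipschitz trivially. A second small gap is that (b) guarantees nonemptiness of the feasible set but not attainment of the infimum; I would either assume the minimizer $\bar{\x}$ exists as part of the setup (it is named $\bar{\x}$ in the hypothesis, so this seems intended) or note that attainment is what is really being posited. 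Once these readings are fixed, both parts are routine.
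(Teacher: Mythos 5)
Your proof is correct; note that the paper itself gives no proof of this proposition (it is dismissed as ``easy to check''), and your argument --- KKT necessity from the linearity of the constraints for (P1), and block-by-block smoothness/Lipschitz bookkeeping on $\g$ for (P2) --- is precisely the standard reasoning the paper implicitly relies on. The two caveats you flag (nonemptiness of the feasible set does not by itself give attainment of the minimum, and it is $\nabla f$ rather than $f$ whose Lipschitz continuity is relevant --- though that already follows from $f\in C^3$) are fair criticisms of the statement as written, not gaps in your argument.
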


For the convergence analysis, we make the following
assumptions for Problem (\ref{CP}).

\vspace{0.05in}
\noindent
{\bf Assumptions:}
\begin{itemize}
\item[(A1)]  The set
$\{ \a_j : j =1, \ldots, m\} \cup \{ \a_i  : i \in I(\bar{\x}) \}$
is linearly independent.
\item[(A2)] For all $\boldsymbol\zeta  \in \R^n \backslash \{0\}$,
%$\eta$ in the null space of the active constraints gradients}, 
we have $\boldsymbol\zeta^{\T} \nabla_{\x}^2 L(\bar{\v})  \boldsymbol\zeta >0$.
\item[(A3)]  For each
$i \in \{ 1, \ldots, p \}$, we have $\bar{z}_i+\bar{s}_i >0$
and $\bar{z}_i \bar{s}_i =0$.
\end{itemize}
Here (A1) is the linear independence
constraint qualification (LICQ); (A2) is the second order sufficient
conditions, which is true because Problem (\ref{CP}) is a convex
optimization problem; and (A3) is strict complementarity. All these properties
are standard and used in convergence analysis in \cite{ettz96,nw06}.
These properties assure the nonsingularity 
of the Jacobian matrix at the optimal solution $\bar{\v}$.

\begin{theorem}\label{nonsingular}
If Conditions (P1), (A1), (A2), and (A3) hold, then the Jacobian 
matrix $\g'(\bar{\v})$ is nonsingular.
\end{theorem}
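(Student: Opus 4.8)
The plan is to prove nonsingularity directly, by showing that the homogeneous system $\g'(\bar{\v})\,(\Delta\x,\Delta\y,\Delta\w,\Delta\s,\Delta\z) = \0$ admits only the trivial solution. Reading off the five block rows of the Jacobian displayed above, this system is $\H\Delta\x + \A_E^{\T}\Delta\y - \A_I^{\T}\Delta\w = \0$, together with $\A_E\Delta\x = \0$, $\A_I\Delta\x - \Delta\s = \0$, $\Delta\w - \Delta\z = \0$, and $\Z\Delta\s + \S\Delta\z = \0$. The last three rows are immediately usable: the fourth gives $\Delta\w = \Delta\z$, the third gives $\Delta\s = \A_I\Delta\x$, and the fifth is componentwise $\bar{z}_i\,\Delta s_i + \bar{s}_i\,\Delta z_i = 0$ for each $i \in \{1,\dots,p\}$.

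First I would use strict complementarity (A3) to partition $\{1,\dots,p\}$ by the active set $I(\bar{\x})$. Since $\bar{\s} = \A_I\bar{\x} - \b_I$ at the KKT point, $\bar{s}_i = 0$ exactly when $i \in I(\bar{\x})$; by (A3), $i \in I(\bar{\x})$ is precisely where $\bar{z}_i > 0$, and off the active set $\bar{z}_i = 0 < \bar{s}_i$. Substituting into $\bar{z}_i\,\Delta s_i + \bar{s}_i\,\Delta z_i = 0$ yields $\Delta s_i = 0$ (equivalently $\a_i\Delta\x = 0$) for $i \in I(\bar{\x})$, and $\Delta z_i = 0$, hence $\Delta w_i = 0$, for $i \notin I(\bar{\x})$.

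Next I would multiply the first block row on the left by $\Delta\x^{\T}$. The term $\Delta\x^{\T}\A_E^{\T}\Delta\y$ vanishes by $\A_E\Delta\x = \0$, and $\Delta\x^{\T}\A_I^{\T}\Delta\w = \sum_i (\a_i\Delta\x)\,\Delta w_i$ vanishes term by term, since the summand is zero for $i \notin I(\bar{\x})$ (as $\Delta w_i = 0$) and zero for $i \in I(\bar{\x})$ (as $\a_i\Delta\x = 0$). Thus $\Delta\x^{\T}\H\Delta\x = 0$; because $\H = \nabla_{\x}^2 f(\bar{\x}) = \nabla_{\x}^2 L(\bar{\v})$ is positive definite by (A2), this forces $\Delta\x = \0$, and then $\Delta\s = \A_I\Delta\x = \0$.

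Finally I would eliminate $\Delta\y$ and $\Delta\w$. With $\Delta\x = \0$ the first row reduces to $\A_E^{\T}\Delta\y - \A_I^{\T}\Delta\w = \0$, and since $\Delta w_i = 0$ off the active set this is a linear relation among the vectors $\{\a_j : j = 1,\dots,m\} \cup \{\a_i : i \in I(\bar{\x})\}$, which are linearly independent by the LICQ assumption (A1). Hence $\Delta\y = \0$ and $\Delta w_i = 0$ for $i \in I(\bar{\x})$ as well, so $\Delta\w = \0$ and $\Delta\z = \Delta\w = \0$; all increments vanish and $\g'(\bar{\v})$ is nonsingular. The one point needing care is the index bookkeeping in this last step: strict complementarity (A3) must first be used to collapse $\A_I^{\T}\Delta\w$ onto the active rows, for otherwise (A1) cannot be invoked; this coupling of (A2) for the $\x$-block and of (A3) then (A1) for the multiplier blocks is the crux of the argument.
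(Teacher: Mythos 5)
Your proposal is correct and follows essentially the same route as the paper's own proof: use the fourth and fifth rows with strict complementarity (A3) to kill the cross term $\Delta\s^{\T}\Delta\w$, multiply the first row by $\Delta\x^{\T}$ and invoke (A2) to get $\Delta\x=\0$, then collapse $\A_I^{\T}\Delta\w$ onto the active rows and apply (A1) to finish. The only cosmetic difference is that you carry the componentwise conclusions ($\Delta s_i=0$ on the active set, $\Delta z_i=0$ off it) directly into the quadratic form, whereas the paper first packages them as $\hat{\d}^{\T}\hat{\c}=0$; the logic is identical.
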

\begin{proof} 
Let $(\hat{\a},\hat{\b},\hat{\c},\hat{\d},\hat{\e})$ be a constant vector that satisfies
\begin{equation}
\left[ \begin{array}{c}
\nabla_{\x}^2 L(\bar{\v}) \\
\A_E \\
\A_I  \\
\0 \\ 
\0 
\end{array} \right] \hat{\a} +
\left[ \begin{array}{c}
\A_E^{\T} \\
\0 \\
\0 \\
\0 \\ 
\0
\end{array} \right] \hat{\b} +
\left[ \begin{array}{c}
-\A_I^{\T} \\
\0 \\
\0 \\
\I \\ 
\0 
\end{array} \right] \hat{\c} +
\left[ \begin{array}{c}
\0 \\
\0 \\
-\I  \\
\0 \\ 
\bar{\Z}  
\end{array} \right] \hat{\d} +
\left[ \begin{array}{c}
\0 \\
\0  \\
\0 \\
-\I  \\
\bar{\S}
\end{array} \right] \hat{\e} =\0.
\label{independent}
\end{equation}
To show the nonsigularity of $\g'(\bar{\v})$,
it is enough to show that  (\ref{independent}) 
holds only if $(\hat{\a},\hat{\b},\hat{\c},\hat{\d},\hat{\e})=\0$. 
First, the fourth row indicates
that $\hat{\c}=\hat{\e}$, therefore, the last row leads to:
\begin{equation}
\bar{z}_i\hat{d}_i+\bar{s}_i\hat{e}_i = \bar{z}\hat{d}_i+\bar{s}_i\hat{c}_i = 0
\label{zdse}
\end{equation}
for each $i \in \{1, \ldots, p\}$.
Therefore, we can derive from (A3) that
\begin{equation*}
\hat{\d}^{\T}  \hat{\c} =0.
\label{dTcIsZero}
\end{equation*}
Actually, for each $i \in \{1, \ldots, p\}$,
either $\bar{z}_i$ or $\bar{s}_i$ is positive. 
Thus, if $\bar{z}_i > 0$, (A3) implies $\bar{s}_i = 0$, 
therefore we know $\hat{d}_i = 0$ due to (\ref{zdse});
Similarly, if $\bar{s}_i > 0$, (A3) implies $\bar{z}_i = 0$,
we know $\hat{c}_i = 0$ due to (\ref{zdse}).
From the second and third rows of (\ref{independent}), 
we have
\begin{equation}
\A_E \hat{\a} =\0, ~~~~~ \A_I \hat{\a} - \hat{\d}=\0.
\label{rows23}
\end{equation}
Multiplying $\hat{\a}^{\T}$ from the left of the first row of
(\ref{independent}) and using (\ref{rows23}), we have
\begin{equation*}
\hat{\a}^{\T} \nabla_{\x}^2 L(\bar{\v}) \hat{\a}
+ \hat{\a}^{\T} \A_E^{\T}  \hat{\b} - \hat{\a}^{\T} \A_I^{\T}  \hat{\c}
= \hat{\a}^{\T} \nabla_{\x}^2 L(\bar{\v}) \hat{\a} - \hat{\d}^{\T}  \hat{\c} 
=\hat{\a}^{\T} \nabla_{\x}^2 L(\bar{\v}) \hat{\a} =0.
\label{aIsZero1}
\end{equation*}
In view of (A2),  we conclude $\hat{\a}=\0$. Then,
it follows from (\ref{rows23}) that $\hat{\d}=\0$,
therefore, we know $\bar{s}_i \hat{c}_i = 0$ for each $i$ from (\ref{zdse}).
If $\bar{s}_i >0$, it holds $\hat{c}_i = 0$ for $i \notin I(\bar{\x})$.
On the other hand,
if $\bar{s}_i = 0$,  it holds that $i \in I(\bar{\x})$, 
so that the first row of (\ref{independent}) turns to be
$\A_E^{\T} \hat{\b} - \A_I^{\T}  \hat{\c} = \0$,
since $\hat{c}_i = 0$ for  $i \notin I(\bar{\x})$.
Consequently, it holds 
$\hat{\b}=\0$ and $\hat{c}_i = 0$ for $i \in I(\bar{\x})$
because of (A1).
As a result, we obtain $\hat{\c} =\0$, and we already know 
$\hat{\c} = \hat{\e}$ from the fourth row of (\ref{independent}).
This proves the theorem.
%This is a well-known result and its proof is omitted.
\hfill \qed
\end{proof}

\begin{remark} 
For $\v$ not close to the optimal solution, the nonsigularity
of the Jacobian of $\g$ is carefully discussed in \cite{yiy21}.
\end{remark}

\section{The interior-point algorithm with arc-search}\label{sec:algorithm}
 
Let ${\v}\BL{t} =({\x}\BL{t},{\y}\BL{t},{\w}\BL{t},{\s}\BL{t},{\z}\BL{t}) \in 
\R^n \times \R^m \times \R^{3p}$ be a function
of $t>0$ which is the solution of the {\it modified perturbed} 
KKT conditions $\g({\v}\BL{t}) = t  \g({\v \BL{1}})$ with 
nonnegative conditions $({\w}\BL{t}, {\s}\BL{t}, {\z}\BL{t}) \in \R_+^{3p}$
given as follows:
\begin{eqnarray}
\left[ \begin{array}{l}
\nabla_{\x} L({\v}\BL{t}) \\
(\A_E \x -\b_E)\BL{t} \\
(\A_I \x-\s-\b_I)\BL{t} \\
\nabla_{\s} L({\v}\BL{t})  \\
(\Z {\s}) \BL{t}
\end{array} \right]= \left[ \begin{array}{l}
 t \r_C \\ % t \nabla_{\x} L({\v}) \\
 t \r_E \\ % t (\A_E \x -\b_E) \\
 t \r_I \\ % t (\A_I \x-\s-\b_I) \\
t \nabla_{\s} L({\v})  \\
t \Z {\s} 
\end{array} \right],
\label{KKTcurve}
\end{eqnarray}
where the current iterate ${\v}=({\x},{\y},{\w},{\s},{\z})
=({\x}\BL{1},{\y}\BL{1}, {\w}\BL{1}, {\s}\BL{1}, {\z}\BL{1})$.
Clearly, ${\v}\BL{t}$ defines a curve in $\R^n \times \R^m \times \R^{3p}$
that passes the current iterate point ${\v}\BL{1}$.
We denote the high dimensional curve defined by (\ref{KKTcurve}) as
$$C = \left\{{\v}\BL{t} \in \R^{n+m+3p} : t \in (0,1] \right\}.$$
Note that the right-hand-side of (\ref{KKTcurve}) goes to zeros 
when $t \to 0$, therefore, $\g({\v}\BL{t}) \to \0$ and ${\v}\BL{t}$ converges to a 
KKT point given by \eqref{KKT1}.

Since the calculation of ${\v}\BL{t}$ is very expensive,
the idea of the arc-search is to efficiently approximate the 
curve $C$ by using part of an ellipse and searching for optimizer
along the ellipse. We denote the ellipse by
\begin{equation}
{\cal E}=\lbrace \v\AN{\alpha}: 
\v\AN{\alpha} =
\vec{\a}\cos(\alpha)+\vec{\b}\sin(\alpha)+\vec{\c}, \alpha \in [0, 2\pi] \rbrace,
\label{ellipse}
\end{equation}
where $\vec{\a} \in \R^{n+m+3p}$ and $\vec{\b} \in \R^{n+m+3p}$ are 
the axes of the ellipse, and $\vec{\c} \in \R^{n+m+3p}$ is its center. 
The calculation of $\vec{\a}$, $\vec{\b}$, and $\vec{\c}$ can be 
avoided by using the analytical formulas given in Theorem \ref{ellipseSX}
\cite{yang09}. Denote
\begin{align*}
\dot{\v} = (\dot{\x}, \dot{\y}, \dot{\w}, \dot{\s},  \dot{\z}).
\end{align*} 
Taking  the derivative on both sides of (\ref{KKTcurve}), 
we get the linear systems of equations
\begin{equation}
\left[ \begin{array}{ccccc}
\H & \A_E^{\T} & -\A_I^{\T} & \0  & \0 \\
\A_E & \0 & \0 & \0 & \0  \\
\A_I & \0 & \0 & -\I  & \0 \\
\0  & \0  &  \I  &  \0  &  -\I  \\
\0 & \0  &\0 & \Z & \S
\end{array} \right]
\left[ \begin{array}{c}
\dot{\x} \\ \dot{\y} \\ \dot{\w} \\ \dot{\s}  \\ \dot{\z}
\end{array} \right]
= \left[ \begin{array}{l}
\r_C \\ % \nabla_{\x} L({\v}) \\
\r_E \\ % \A_E \x -\b_E \\
\r_I \\ % \A_I \x-\s-\b_I \\
{\w}-{\z}  \\
\Z {\s} 
\end{array} \right].
\label{firstOrder}
\end{equation}
%\begin{equation}
%\g' (\v\BL{t})\vert_{t=1} = \g'({\v})\dot{\v} = \g({\v}).
%\label{curveDev}
%\end{equation} 
The first-order derivative of the curve $\v\BL{t}$ at $t=1$ along 
$C$ is denoted by $\dot{\v}$.
Let  $\sigma \in [0,1]$ be the centering parameter (see \cite{wright97}).
The second-order derivative 
$\ddot{\v} = (\ddot{\x}, \ddot{\y}, \ddot{\w}, \ddot{\s}, \ddot{\z})$ 
at $t=1$ along the curve is defined
as the solution of the following linear systems of equations:
{\footnotesize
\begin{eqnarray}
\left[ \begin{array}{ccccc}
\H  & \A_E^{\T} & -\A_I^{\T} & \0  & \0 \\
\A_E & \0 & \0 & \0 & \0  \\
\A_I & \0 & \0 & -\I  & \0 \\
\0  & \0  &  \I  &  \0  &  -\I  \\
\0 & \0 & \0  & \Z & \S
\end{array} \right]
\left[ \begin{array}{c}
\ddot{\x} \\ \ddot{\y} \\ \ddot{\w} \\ \ddot{\s} \\ \ddot{\z}
\end{array} \right]
=  \left[ \begin{array}{c}
-(\nabla_{\x}^3 f(\x))\dot{\x} \dot{\x} \\
\0 \\
\0 \\
\0  \\
-2\Z \dot{\s} 
\end{array} \right]
\approx
 \left[ \begin{array}{c}
\0 \\
\0 \\
\0 \\
\0  \\
-2\dot{\Z} \dot{\s}  +\sigma {\mu} \e
\end{array} \right].
\label{secondOrder}
\end{eqnarray}
}
We add a centering item $\sigma {\mu} \e$ to the last element in right 
hand side, which is the same strategy used in \cite{yang20}.
This modification assures that a substantial segment of 
the ellipse satisfies the requirement of $(\s,\z)>\0$, thereby  
assures that the step size along the ellipse is greater than zero. 
Our experience in \cite{yiy21} shows that the computation of 
$(\nabla_{\x}^3 f(\x))\dot{\x} \dot{\x}$ is very expensive.
Therefore, to have an efficient algorithm, we omit this higher 
order term in the rest discussion. We show that this modification
leads to an algorithm that converges in polynomial time. It is 
worthwhile to mention that, according to (\ref{secondOrder}),
$\ddot{\v} = (\ddot{\x}, \ddot{\y}, \ddot{\w},\ddot{\s},\dot{\z})$
is a function of $\sigma$, i.e., $\ddot{\v}$ should be written as
$\ddot{\v}(\sigma)$. But we use $\ddot{\v}$ most time
when no confusion is introduced.

Using $\dot{\v}$ and $\ddot{\v}$, we can approximate 
$C$ at $t=1$ by an ellipse (\ref{ellipse}) 
that has the explicit form given in Theorem \ref{ellipseSX}.
We should emphasize that we use $t$ to denote the curve $C$
and ${\v}\BL{t}$ passes ${\v}$ at $t=1$,
while we use the angle $\alpha$ to express an ellipse ${\cal E}$
and ${\v}\AN{\alpha}$ passes $\v$ at $\alpha = 0$, therefore,
${\v}\BL{1} = {\v}\AN{0} = {\v}$.

\begin{theorem}[\cite{yang11}]\label{theorem:ellip}
%\upshape{\cite{yang11}}
Assume that an ellipse ${\cal E}$ 
of form \textrm{(\ref{ellipse})} passes through 
the current iterate ${\v}$ at $\alpha=0$, let the first and second 
order derivatives at $\alpha=0$ be $\dot{\v}$ 
and $\ddot{\v}$ which are defined by (\ref{firstOrder}) 
and (\ref{secondOrder}), respectively. Then the curve ${\v}\AN{\alpha}$
depends on the selection of $\sigma$ and
${\v}\AN{\alpha,\sigma} = ({\x}\AN{\alpha,\sigma}, {\y}\AN{\alpha,\sigma}, 
{\w}\AN{\alpha,\sigma}, {\s}\AN{\alpha,\sigma}, {\z}\AN{\alpha,\sigma})$ 
of ${\cal E}$ is given by
\begin{align}
{\v}\AN{\alpha,\sigma} = {\v} - \dot{\v}\sin(\alpha)+\ddot{\v}(\sigma)(1-\cos(\alpha)),
\label{vAlpha}
\end{align}
or 
\begin{align}
\left[ \begin{array}{c}
{\x}^{k+1} \\ {\y}^{k+1} \\ {\w}^{k+1} \\ {\s}^{k+1} \\ {\z}^{k+1}
\end{array} \right]
= \left[ \begin{array}{c}
{\x}^{k} \\ {\y}^{k} \\ {\w}^{k} \\ {\s}^{k} \\ {\z}^{k}
\end{array} \right]
-\left[ \begin{array}{c}
\dot{\x} \\ \dot{\y} \\ \dot{\w} \\ \dot{\s} \\ \dot{\z}
\end{array} \right] \sin(\alpha^k)
+\left[ \begin{array}{c}
\ddot{\x}(\sigma) \\ \ddot{\y}(\sigma) \\ \ddot{\w}(\sigma) \\ \ddot{\s}(\sigma) \\ \ddot{\z}(\sigma)
\end{array} \right] (1-\cos(\alpha^k)).
\label{allAlpha}
\end{align}
\label{ellipseSX}
\end{theorem}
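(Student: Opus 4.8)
The plan is to treat the three defining vectors $\vec{\a},\vec{\b},\vec{\c}$ of the ellipse ${\cal E}$ in \eqref{ellipse} as unknowns and to pin them down by three interpolation conditions imposed at $\alpha=0$: (i) the ellipse passes through the current iterate, i.e. $\v\AN{0}=\v$; (ii) the first derivative of $\v\AN{\alpha}$ in $\alpha$ at $\alpha=0$ is the tangent of the curve $C$ oriented toward the KKT point (decreasing $t$), which by \eqref{firstOrder} is $-\dot{\v}$; and (iii) the second derivative of $\v\AN{\alpha}$ in $\alpha$ at $\alpha=0$ is $\ddot{\v}(\sigma)$ as defined by \eqref{secondOrder}. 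Differentiating $\v\AN{\alpha}=\vec{\a}\cos\alpha+\vec{\b}\sin\alpha+\vec{\c}$ gives $\tfrac{d}{d\alpha}\v\AN{\alpha}=-\vec{\a}\sin\alpha+\vec{\b}\cos\alpha$ and $\tfrac{d^2}{d\alpha^2}\v\AN{\alpha}=-\vec{\a}\cos\alpha-\vec{\b}\sin\alpha$, so conditions (i)--(iii) reduce to the linear system $\vec{\a}+\vec{\c}=\v$, $\vec{\b}=-\dot{\v}$, $-\vec{\a}=\ddot{\v}(\sigma)$.

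Solving this system yields $\vec{\a}=-\ddot{\v}(\sigma)$, $\vec{\b}=-\dot{\v}$, and $\vec{\c}=\v+\ddot{\v}(\sigma)$. Substituting back into \eqref{ellipse} gives $\v\AN{\alpha}=-\ddot{\v}(\sigma)\cos\alpha-\dot{\v}\sin\alpha+\v+\ddot{\v}(\sigma)$, which, after collecting the two $\ddot{\v}(\sigma)$ terms, is exactly $\v\AN{\alpha,\sigma}=\v-\dot{\v}\sin\alpha+\ddot{\v}(\sigma)(1-\cos\alpha)$, i.e. formula \eqref{vAlpha}; in particular the only dependence on $\sigma$ enters through $\ddot{\v}(\sigma)$, which is why $\v\AN{\alpha,\sigma}$ is a function of $\sigma$. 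The equivalent form \eqref{allAlpha} is then just the componentwise rewriting of \eqref{vAlpha} with $\v=\v^{k}$, $\alpha=\alpha^{k}$ and $\v^{k+1}:=\v\AN{\alpha^{k},\sigma}$, so nothing further is required there.

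The only delicate point, and the one I would be careful to spell out, is the sign bookkeeping in condition (ii): $\dot{\v}$ defined by \eqref{firstOrder} equals $\tfrac{d}{dt}\v\BL{t}$ at $t=1$, while the arc is meant to be traversed in the direction of decreasing $t$ (so that $\v\AN{\alpha}$ moves toward a solution of \eqref{KKT1} as $\alpha$ grows from $0$), which forces the tangent of ${\cal E}$ at $\alpha=0$ to be $-\dot{\v}$ and hence produces the minus sign in front of $\sin\alpha$ in \eqref{vAlpha}; matching $+\dot{\v}$ instead would give $\v+\dot{\v}\sin\alpha+\ddot{\v}(\sigma)(1-\cos\alpha)$, which is the same ellipse traced in the opposite orientation. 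Apart from this orientation convention the argument is an elementary computation and I do not anticipate any real obstacle; one may additionally remark that if $\dot{\v}$ and $\ddot{\v}(\sigma)$ are linearly dependent the "ellipse" \eqref{ellipse} degenerates to a line segment through $\v$, and \eqref{vAlpha} still describes it, so no separate case is needed.
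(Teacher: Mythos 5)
Your proof is correct, and it is essentially the argument of the cited source \cite{yang09,yang11}: the paper itself omits the proof entirely (the theorem is imported by citation), and the standard derivation is exactly your interpolation computation --- impose $\v\AN{0}=\v$, $\tfrac{d}{d\alpha}\v\AN{\alpha}\big|_{0}=-\dot{\v}$, $\tfrac{d^2}{d\alpha^2}\v\AN{\alpha}\big|_{0}=\ddot{\v}(\sigma)$, solve the resulting linear system for $\vec{\a},\vec{\b},\vec{\c}$, and substitute back. Your explicit handling of the orientation convention (why the tangent is matched to $-\dot{\v}$ rather than $+\dot{\v}$) is a point the original reference also relies on, so nothing is missing.
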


We would like to emphasize that the second derivatives (therefore the ellipse)
are functions of both $\alpha$ and $\sigma$ which we will carefully select 
simultaneously in every iteration $k$.
The following lemma can be used to simplify the computation of (\ref{allAlpha}).
\begin{lemma}[\cite{yiy21}]\label{eqwz}
%\upshape{\cite{yiy21}}
If ${\v}$ satisfies ${\w}={\z}$, then ${\w}\AN{\alpha}={\z}\AN{\alpha}$ 
holds for any $\alpha \in \R$. 
\end{lemma}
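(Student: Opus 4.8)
The plan is to show that the property $\w = \z$ is propagated along the ellipse by checking that the two ``middle'' block equations of the defining linear systems (\ref{firstOrder}) and (\ref{secondOrder}) force $\dot\w = \dot\z$ and $\ddot\w = \ddot\z$, and then to invoke the explicit parametrization (\ref{allAlpha}) from Theorem~\ref{ellipseSX}. Concretely, the fourth block row of (\ref{firstOrder}) reads $\dot\w - \dot\z = \w - \z$, so if the current iterate satisfies $\w = \z$ then $\dot\w - \dot\z = \0$, i.e.\ $\dot\w = \dot\z$. Likewise the fourth block row of (\ref{secondOrder}) reads $\ddot\w - \ddot\z = \0$ (its right-hand side in that block is $\0$), so $\ddot\w = \ddot\z$ regardless; combined with $\dot\w = \dot\z$ this is what we need.

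Next I would substitute these identities into the update formula (\ref{allAlpha}). For the $\w$- and $\z$-components we have
\begin{align*}
\w\AN{\alpha} &= \w - \dot\w \sin(\alpha) + \ddot\w(\sigma)\,(1-\cos(\alpha)),\\
\z\AN{\alpha} &= \z - \dot\z \sin(\alpha) + \ddot\z(\sigma)\,(1-\cos(\alpha)).
\end{align*}
Subtracting and using $\w = \z$, $\dot\w = \dot\z$, $\ddot\w = \ddot\z$ gives $\w\AN{\alpha} - \z\AN{\alpha} = \0$ for every $\alpha \in \R$, which is the claim. Note the argument needs only that the coefficients of $1$, $\sin(\alpha)$, $\cos(\alpha)$ in the two components agree; it does not use anything special about $\sin$ and $\cos$, so the conclusion is genuinely valid for all real $\alpha$, not merely on $[0,2\pi]$.

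The only point that requires a word of care — and the closest thing to an obstacle — is making sure the fourth block equations of (\ref{firstOrder}) and (\ref{secondOrder}) are the ones I claim. In (\ref{firstOrder}) the fourth row of the coefficient matrix is $[\,\0\ \ \0\ \ \I\ \ \0\ \ -\I\,]$ acting on $(\dot\x,\dot\y,\dot\w,\dot\s,\dot\z)$, giving $\dot\w - \dot\z$, matched to the right-hand side $\w - \z = \nabla_\s L(\v)$; in (\ref{secondOrder}) the same row of the (identical) coefficient matrix produces $\ddot\w - \ddot\z$, matched to $\0$. Since the algorithm is always initialized and iterated so that these linear systems are solved exactly, the derived equalities hold, and the proof is complete. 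This mirrors the corresponding fact for line-search infeasible interior-point methods, where $\w = \z$ is maintained because the search direction satisfies $\Delta\w = \Delta\z$; here the ellipse simply carries the same two linear constraints through both the first- and second-order data.
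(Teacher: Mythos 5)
Your proof is correct and is precisely the ``straightforward'' argument the paper omits: the fourth block rows of (\ref{firstOrder}) and (\ref{secondOrder}) (and of (\ref{secondOrderP})--(\ref{secondOrderQ}), so the identity survives for every $\sigma$) give $\dot{\w}-\dot{\z}=\w-\z=\0$ and $\ddot{\w}-\ddot{\z}=\0$, and subtracting the two components of (\ref{allAlpha}) finishes the claim. No gaps.
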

\begin{proof}
%From the fourth row of (\ref{firstOrder}), we have 
%$\dot{w}-\dot{z}={w}-{z} = 0$. Similarly, the fourth
%row in (\ref{secondOrder}) leads to 
%$\ddot{w}-\ddot{z}=0$. Therefore, the formula
%(\ref{vAlpha}) gives the lemma.
The proof is straightforward and therefore is omitted.
\hfill \qed
\end{proof}

For numerical stability, we need that the Jacobian stays away
from singularity. Therefore, we make the following assumption.

\vspace{0.05in}\noindent
{\bf Assumption:}
\begin{itemize}
\item[(A3')] $\Z^k > \0$ and $\S^k > \0$ are bounded below and
away from zeros for all $k$ iterations until the program is terminated.
\end{itemize}
It will be clear that this assumption is also important in the convergence
analysis. 

As discussed in \cite{yang18} and \cite{yang21}, it is a good 
strategy to simultaneously select the step size $\alpha$ and the centering
paramenter $\sigma$ whenever it is possible. To this end, we 
should express $\ddot{\v}$ explicitly as a function of $\sigma$. This can 
be done by solving two linear systems of equations:

\begin{eqnarray}
\left[ \begin{array}{ccccc}
\H  & \A_E^{\T} & -\A_I^{\T} & \0  & \0 \\
\A_E & \0 & \0 & \0 & \0  \\
\A_I & \0 & \0 & -\I  & \0 \\
\0  & \0  &  \I  &  \0  &  -\I  \\
\0 & \0 & \0  & \Z & \S
\end{array} \right]
\left[ \begin{array}{c}
\p_{\x} \\ \p_{\y} \\ \p_{\w} \\ \p_{\s} \\ \p_{\z}
\end{array} \right]
=   \left[ \begin{array}{c}
\0 \\
\0 \\
\0 \\
\0  \\
\mu \e 
\end{array} \right],
\label{secondOrderP}
\end{eqnarray}
and
\begin{eqnarray}
\left[ \begin{array}{ccccc}
\H  & \A_E^{\T} & -\A_I^{\T} & \0  & \0 \\
\A_E & \0 & \0 & \0 & \0  \\
\A_I & \0 & \0 & -\I  & \0 \\
\0  & \0  &  \I  &  \0  &  -\I  \\
\0 & \0 & \0  & \Z & \S
\end{array} \right]
\left[ \begin{array}{c}
\q_{\x} \\ \q_{\y} \\ \q_{\w} \\ \q_{\s} \\ \q_{\z}
\end{array} \right]
=  \left[ \begin{array}{c}
\0 \\
\0 \\
\0 \\
\0  \\
-2\dot{\Z} \dot{\s}
\end{array} \right],
\label{secondOrderQ}
\end{eqnarray}
denoting $\p=(\p_{\x}, \p_{\y}, \p_{\w}, \p_{\s}, \p_{\z})$ and
$\q=(\q_{\x}, \q_{\y}, \q_{\w}, \q_{\s}, \q_{\z})$, then we have
$\ddot{\v}= \p \sigma + \q$. Solving (\ref{firstOrder}), (\ref{secondOrderP}),
and (\ref{secondOrderQ}) is equivalent to solve linear systems of
equations $\A \d_i = \b_i$ for $i=1,2,3$ with the same $\A$ 
but different $\b_i$. Therefore we can use the same decomposition 
of $\A$ three times as indicated in \cite{wright97}, which
justifies the strategy of splitting (\ref{secondOrder}) into (\ref{secondOrderP})  
and (\ref{secondOrderQ}).

For the convex programming problem (\ref{CP}),
it is well-known that a vector 
$\bar{\v}=(\bar{\x},\bar{\y},\bar{\w},\bar{\s},\bar{\z})$ 
that meets the KKT conditions is the optimal solution. Therefore, 
we need to show that the proposed algorithm for problem (\ref{CP})
will generate a sequence $\v^k$ such that it approaches to an point
$\bar{\v}$ that meets the approximate KKT conditions:
\begin{itemize}
\item[(C1).] $(\bar{\r}_C, \bar{\r}_E, \bar{\r}_I) \le \epsilon$.
\item[(C2).] $(\w^k,\s^k,\z^k) >\0$ before the program terminates
at $(\bar{\w},\bar{\s},\bar{\z}) \ge 0$.
\item[(C3).] $\bar{\mu} \le \epsilon$ (given $(\bar{\s},\bar{\z}) \ge \0$, this 
is equivalent to $\bar{\z}^{\T} \bar{\s} \le p \epsilon$).
%\item[4.] $\s_i^k \z_i^k \ge \theta \mu$ for some fixed $\theta>0$.
\end{itemize}

In addition to the approximate KKT conditions, we will restrict the search 
in an interior point region given as follows:
\begin{equation}
{\mathcal F} =\lbrace(\s, \z): \hspace{0.1in} (\s, \z) >\0,
\hspace{0.1in} s_i^kz_i^k \ge \theta \mu_k \rbrace, 
\label{infeasible1} 
\end{equation}
where $\theta \in (0,1)$ is a constant.
Therefore, this imposes one more condition on $\v^k$:
\begin{itemize}
\item[(C4).] 
\begin{equation}
\S^k \z^k = \Z^k \s^k \ge \theta \mu_k \e.
\label{cond4}
\end{equation}
\end{itemize}
%This condition will simplify our proofs.
% and we will show later on that Condition (C4) can be removed.

The following proposition shows that searching along the ellipsoidal arc does
improve the objective function and the feasibility of the constraints.
Moreover, if $\alpha^k$ is bounded below and away from zero for
all $k$, The above-mentioned Condition (C1) will hold.

\begin{proposition}
Denote $\nu_k=\prod_{j=0}^{k-1} (1-\sin(\alpha^j))$. We have the
following formulas.
\begin{subequations}
\begin{align}
\r_C^{k+1} = % \approx 
\r_C^{k} (1-\sin(\alpha^{k})) = \cdots 
= \r_C^0 \prod_{j=0}^{k} (1-\sin(\alpha^j))
= \r_C^0 \nu_k, \label{a}
\\
\r_E^{k+1} = \r_E^{k} (1-\sin(\alpha^{k})) = \cdots 
= \r_E^0 \prod_{j=0}^{k} (1-\sin(\alpha^j))
= \r_E^0 \nu_k. \label{b}
\\
\r_I^{k+1} = \r_I^{k} (1-\sin(\alpha^{k})) = \cdots 
= \r_I^0 \prod_{j=0}^{k} (1-\sin(\alpha^j))
= \r_I^0 \nu_k. \label{c}
\end{align}
\label{errorUpdate}
\end{subequations}
\label{basic}
\end{proposition}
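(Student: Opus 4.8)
The plan is to establish the three recursions by a single induction on $k$, where the inductive step reduces to one clean computation: evaluating the residuals $\r_C$, $\r_E$, $\r_I$ at the new iterate $\v^{k+1}$ given by the arc update formula \eqref{allAlpha}. The key observation is that each of $\r_C$, $\r_E$, $\r_I$ is an \emph{affine} function of $\v$ — indeed $\r_C = \H\x + \A_E^{\T}\y - \A_I^{\T}\w$, $\r_E = \A_E\x - \b_E$, $\r_I = \A_I\x - \s - \b_I$ — so their value at $\v^{k+1} = \v^k - \dot\v\sin(\alpha^k) + \ddot\v(\sigma)(1-\cos(\alpha^k))$ splits into the value at $\v^k$ plus a linear term in $\dot\v$ (times $\sin(\alpha^k)$) plus a linear term in $\ddot\v$ (times $1-\cos(\alpha^k)$). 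Here one must be mildly careful that $\H$ is re-evaluated each iteration; but for the residual $\r_C^{k+1}$ one uses $\H = \H(\x^{k+1})$ only in the sense that $\r_C^{k+1} := \H(\x^{k+1})\x^{k+1} + \A_E^{\T}\y^{k+1} - \A_I^{\T}\w^{k+1}$ is by definition \eqref{residuals} what we must simplify — and the linearity argument applies to the $\A_E, \A_I$ blocks, with the $\H\x$ block handled separately (see the obstacle paragraph below).

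First I would read off from the first three rows of the first-order system \eqref{firstOrder} the identities
\begin{equation*}
\H\dot\x + \A_E^{\T}\dot\y - \A_I^{\T}\dot\w = \r_C^k, \qquad \A_E\dot\x = \r_E^k, \qquad \A_I\dot\x - \dot\s = \r_I^k,
\end{equation*}
and from the first three rows of the second-order systems \eqref{secondOrderP}, \eqref{secondOrderQ} (hence for $\ddot\v = \p\sigma + \q$) the identities
\begin{equation*}
\H\ddot\x + \A_E^{\T}\ddot\y - \A_I^{\T}\ddot\w = \0, \qquad \A_E\ddot\x = \0, \qquad \A_I\ddot\x - \ddot\s = \0.
\end{equation*}
Substituting the arc formula into $\r_E^{k+1} = \A_E\x^{k+1} - \b_E$ gives $\r_E^{k+1} = (\A_E\x^k - \b_E) - (\A_E\dot\x)\sin(\alpha^k) + (\A_E\ddot\x)(1-\cos(\alpha^k)) = \r_E^k - \r_E^k\sin(\alpha^k) + \0 = \r_E^k(1-\sin(\alpha^k))$, which is \eqref{b}; the same substitution into $\r_I^{k+1} = \A_I\x^{k+1} - \s^{k+1} - \b_I$ uses $\A_I\dot\x - \dot\s = \r_I^k$ and $\A_I\ddot\x - \ddot\s = \0$ to give \eqref{c}; and the argument for $\r_C^{k+1}$ uses $\H\dot\x + \A_E^{\T}\dot\y - \A_I^{\T}\dot\w = \r_C^k$ together with $\H\ddot\x + \A_E^{\T}\ddot\y - \A_I^{\T}\ddot\w = \0$ to give \eqref{a}. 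Telescoping the one-step recursion from $j=0$ to $j=k$ yields the product formulas and the identification with $\r_C^0\nu_k$, $\r_E^0\nu_k$, $\r_I^0\nu_k$ (noting $\nu_k = \prod_{j=0}^{k-1}(1-\sin(\alpha^j))$, so the stated products run to index $k$ and should be read accordingly).

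The main obstacle is the $\r_C$ recursion: strictly, $\r_C^{k+1}$ involves $\H(\x^{k+1})$, whereas $\dot\v$ and $\ddot\v$ were computed with $\H(\x^k)$, so the naive affine-splitting is not exactly valid for the $\H\x$ term. The resolution — and I expect this is exactly the point of the Remark following \eqref{defineG}, which flags that $\r_C^k$ is defined as an \emph{approximate} rather than exact Lagrangian-gradient residual — is that $\r_C^k$ is defined by the formula $\H\x^k + \A_E^{\T}\y^k - \A_I^{\T}\w^k$ with a \emph{fixed} linear operator in the $\x$-slot for the purpose of this update identity, i.e. the algorithm updates $\r_C$ by the linear rule $\r_C^{k+1} = \r_C^k(1-\sin(\alpha^k))$ by construction, consistent with the right-hand side of \eqref{KKTcurve} being $t\r_C$ and the arc being the degree-two Taylor-type surrogate of the curve $C$. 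I would therefore state the $\r_C$ step as following from linearity in $(\x,\y,\w)$ of the defining expression \eqref{residuals} together with the first rows of \eqref{firstOrder}, \eqref{secondOrderP}, \eqref{secondOrderQ}, exactly parallel to the $\r_E$, $\r_I$ cases, and remark that this is the sense in which $\r_C$ is propagated. Everything else is routine substitution and telescoping.
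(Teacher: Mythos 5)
Your proof is correct and follows essentially the same route as the paper: substitute the arc update \eqref{allAlpha} into the affine residual expressions \eqref{residuals}, invoke the first rows of \eqref{firstOrder} and of the second-order system (whose right-hand side is $\0$ in those rows), and telescope. The paper's own proof silently writes $\r_C^{k+1}-\r_C^{k}=\H(\x^{k+1}-\x^k)+\cdots$ with a single $\H$, so your explicit handling of the re-evaluated Hessian (via the ``approximate residual'' convention of the Remark) and your note on the $\nu_k$ indexing are refinements of, not departures from, the paper's argument.
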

\begin{proof}
Using (\ref{residuals}), (\ref{allAlpha}), and the first lines
of (\ref{firstOrder}) and (\ref{secondOrder}), we have
\begin{eqnarray}
\r_C^{k+1}-\r_C^{k} & = &
\H (\x^{k+1}-\x^k) + \A_E(\y^{k+1}-\y^k)-\A_I(\w^{k+1}-\w^k)
\nonumber \\
 &  = & \H [-\dot{\x}\sin(\alpha)+\ddot{\x}(1-\cos(\alpha))]
\nonumber \\
 &  & + \A_E[-\dot{\y}\sin(\alpha)+\ddot{\y}(1-\cos(\alpha))]
\nonumber \\
 &  & -   \A_I[-\dot{\w}\sin(\alpha)+\ddot{\w}(1-\cos(\alpha))]
\nonumber \\
 &  = & -[\H \dot{\x}+\A_E\dot{\y}-\A_I\dot{\w}]\sin(\alpha)
\nonumber \\
 &  & +[\H \ddot{\x}+\A_E\ddot{\y}-\A_I\ddot{\w}](1-\cos(\alpha))
\nonumber \\
 &  = & -\r_C^{k}\sin(\alpha).
\end{eqnarray}
This shows $\r_C^{k+1} = \r_C^{k} (1-\sin(\alpha^{k}))$. 
Following a similar argument proves (\ref{b}) and (\ref{c}).
\hfill \qed
\end{proof}

\begin{remark}
This proposition indicates that searching along the ellipsoidal curve will
improve the objective function and feasibility at the same rate in every
iteration. The larger the $\alpha$ is, the faster the improvement will be.
\end{remark}

If $\nu_k=0$, then $\r_E= \0$ and $\r_I= \0$, Problem 
(\ref{CP}) is reduced to a feasible convex programming problem, 
for which a feasible interior point algorithm such as \cite{kz93}
should be a more appropriate choice. Therefore, we 
make the following assumption as below.

\vspace{0.05in}\noindent
{\bf Assumption:}
\begin{itemize}
\item[(A4)] $\nu_k >0$ for all $k>0$.
\end{itemize}

To meet the positiveness requirement of Condition (C2), we adopt the
strategy described in \cite{yang18}. Let $\rho \in (0,1)$ be a constant, and
\begin{equation}
\underline{s}_k = \displaystyle\min_i s^k_i, \hspace{0.1in}
\underline{z}_k = \displaystyle \min_j z^k_j.
\label{ijmath}
\end{equation}
Denote $\phi_k$ and $\psi_k$
such that 
\begin{align}
\phi_k = \min \{ \rho \underline{s}_k , \nu_k \}, \hspace{0.1in}
\psi_k = \min \{ \rho \underline{z}_k, \nu_k \}. 
\label{phikpsi}
\end{align}
It is clear that 
\begin{subequations}
\begin{align}
\0< \phi_k \e \le \rho \s^k, \hspace{0.1in} \0< \phi_k \e \le \nu_{k}\e, 
\label{phi} \\
\0< \psi_k \e \le  \rho \z^k, \hspace{0.1in} \0< \psi_k \e \le  \nu_{k}\e.
\label{psi}
\end{align}
\end{subequations}

Positivity of $\s(\sigma_k, \alpha_k)$ and $\z(\sigma_k,\alpha_k)$ is
guaranteed if $(\s^0,\z^0)>\0$ and the following conditions hold.
\begin{eqnarray}
\s^{k+1} & = & \s(\sigma_k, \alpha_k) 
= \s^k-\dot{\s} \sin(\alpha_k) +\ddot{\s}(1-\cos(\alpha_k)) \nonumber \\
& = & {\p}_{\s} (1-\cos(\alpha_k)) \sigma_k +
[\s^k-\dot{\s} \sin(\alpha_k)+ \q_{\s} (1-\cos(\alpha_k))]
\nonumber \\
& := & \a_s(\alpha_k) \sigma_k  +\b_s(\alpha_k)   \ge \phi_k \e. 
\label{posi1}
\end{eqnarray}
\begin{eqnarray}
\z^{k+1} & = & \z(\sigma, \alpha_k) 
= \z^k-\dot{\z} \sin(\alpha_k) +\ddot{\z}(1-\cos(\alpha_k)) \nonumber \\
& = &  {\p}_{\z} (1-\cos(\alpha_k)) \sigma_k  +
[\z^k-\dot{\z} \sin(\alpha_k)+ \q_{\z} (1-\cos(\alpha_k))]
\nonumber \\
& := & \a_z(\alpha_k) \sigma_k  +\b_z(\alpha_k)  \ge \psi_k \e. 
\label{posi2}
\end{eqnarray}

\begin{remark}
Conditions of (\ref{posi1}) and (\ref{posi2}) will be enforced in
the algorithm. Given $\phi_k$ and $\psi_k$ as calculated in
(\ref{phikpsi}), the corresponding $\alpha_k$ and $\sigma_k$
that meet (\ref{posi1}) and (\ref{posi2}) will be calculated by
the formulas (\ref{alphaS})-(\ref{case7b}) and a process described in Algorithm
\ref{bisectionSearch}.
\end{remark}

If $\s^{k+1}= \s^k-\dot{\s} \sin(\alpha_k) 
+\ddot{\s}(1-\cos(\alpha_k)) \ge \rho \s^k$ holds, from (\ref{phi}), we have
$\s^{k+1}\ge \phi_k \e$. Therefore, inequality (\ref{posi1}) will hold if
the following inequality holds
\begin{equation}
(1-\rho ) \s^k -\dot{\s} \sin(\alpha_k) +\ddot{\s}(1-\cos(\alpha_k)) \ge \0.
\label{posi3}
\end{equation} 
%The above inequality holds for some $\alpha_k>0$ bounded 
%below and away from zero
%because $(1-\rho ) \s^k >\0$ is bounded below and away from zero
%due to Assumption (A3').
In view of (\ref{psi}), inequality (\ref{posi2}) will hold if the following inequality holds
\begin{equation}
(1-\rho ) \z^k -\dot{\z} \sin(\alpha_k) +\ddot{\z}(1-\cos(\alpha_k)) \ge \0.
\label{posi4}
\end{equation}
Inequalities (\ref{posi3}) and (\ref{posi4}) hold for some $\alpha_k>0$ 
bounded below and away from zero because $(1-\rho ) \s^k >\0$ and
$(1-\rho ) \z^k >\0$ is bounded below and away from zero
due to Assumption (A3'). 
%The above inequality holds for some $\alpha_k>0$ bounded below and away from zero
%because $(1-\rho ) \z^k >\0$ is bounded below and away from zero
%due to Assumption (A3'). 

The following proposition follows immediately from the above discussion.
\begin{proposition}
There is an $\alpha_k>0$ bounded below and away from zero such that
$(\s^{k+1}, \z^{k+1})>\0$ for all iteration $k$.
\label{secondBound}
\end{proposition}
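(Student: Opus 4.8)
The plan is to turn the informal remark preceding the statement into a quantitative one: inequalities \eqref{posi3} and \eqref{posi4} hold for every $\alpha_k$ in an interval $(0,\bar\alpha]$ whose right endpoint can be chosen \emph{uniformly in $k$}. Two ingredients make the bound uniform. First, Assumption (A3$'$) supplies a constant $\delta>0$ with $\underline{s}_k\ge\delta$ and $\underline{z}_k\ge\delta$ for all $k$, so that componentwise $(1-\rho)\s^k\ge(1-\rho)\delta\,\e$ and $(1-\rho)\z^k\ge(1-\rho)\delta\,\e$. Second, the search directions stay bounded in norm by a single constant $M$ over all iterations: the linear systems \eqref{firstOrder}, \eqref{secondOrderP}, \eqref{secondOrderQ} share the coefficient matrix $\g'(\v^k)$, whose inverse is uniformly bounded (A3$'$ keeps $\Z^k,\S^k$ away from the singularity set, together with the uniform positive definiteness of $\H$ coming from convexity), while their right-hand sides are uniformly bounded because the residuals $\r_C^k,\r_E^k,\r_I^k$ are nonincreasing by Proposition~\ref{basic}, $\mu_k$ is bounded above, and $\dot{\Z}\dot{\s}$ is controlled once $\dot{\v}$ is. Since $\ddot{\v}=\p\sigma_k+\q$ with $\sigma_k\in[0,1]$, this bounds $\ddot{\s},\ddot{\z}$ as well; let $M$ be a common bound for $\|\dot{\s}\|_\infty,\|\dot{\z}\|_\infty,\|\ddot{\s}\|_\infty,\|\ddot{\z}\|_\infty$.

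Next I would fix $\bar\alpha\in(0,\pi/2)$, depending only on $M,\rho,\delta$ and not on $k$, small enough that
$$ M\sin(\bar\alpha)+M\bigl(1-\cos(\bar\alpha)\bigr)\le(1-\rho)\delta , $$
which is possible since the left side tends to $0$ as $\bar\alpha\to 0^+$. Then for any $\alpha_k\in(0,\bar\alpha]$, using $\sin(\alpha_k)\ge 0$, $1-\cos(\alpha_k)\ge 0$ and the bounds above,
$$ (1-\rho)\s^k-\dot{\s}\sin(\alpha_k)+\ddot{\s}\bigl(1-\cos(\alpha_k)\bigr)\ge(1-\rho)\delta\,\e-M\sin(\alpha_k)\,\e-M\bigl(1-\cos(\alpha_k)\bigr)\,\e\ge\0, $$
which is exactly \eqref{posi3}; the identical estimate with $(\s,\dot{\s},\ddot{\s})$ replaced by $(\z,\dot{\z},\ddot{\z})$ gives \eqref{posi4}. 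By the discussion following \eqref{posi2}, \eqref{posi3} yields $\s^{k+1}\ge\rho\s^k\ge\phi_k\e>\0$ via \eqref{phi}, and \eqref{posi4} yields $\z^{k+1}\ge\rho\z^k\ge\psi_k\e>\0$ via \eqref{psi}. Hence any choice $\alpha_k\in(0,\bar\alpha]$ with the lower end of the admissible range bounded away from $0$ uniformly in $k$ produces $(\s^{k+1},\z^{k+1})>\0$, which proves the proposition.

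The only real work is the uniform boundedness of $\dot{\v}$ and $\ddot{\v}$; everything after it is the elementary small-angle estimate. That step is where one must combine the conditioning of $\g'(\v^k)$ from (A3$'$) with the monotonicity of the residuals from Proposition~\ref{basic} and a uniform bound on $\mu_k$, and it is also the step most likely to require an auxiliary hypothesis, namely that the iterates $\x^k$ remain in a bounded set so that $\H=\nabla_{\x}^2 f(\x^k)$ is uniformly bounded and uniformly positive definite. If instead one only wants the weaker, iteration-wise statement "for each fixed $k$ there exists $\alpha_k>0$", the boundedness of the directions at that single $k$ is immediate and no uniformity argument is needed.
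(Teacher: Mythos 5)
Your proposal is correct and follows essentially the same route as the paper, whose own ``proof'' of Proposition~\ref{secondBound} is just the informal discussion preceding it: (A3$'$) keeps $(1-\rho)\s^k$ and $(1-\rho)\z^k$ bounded away from zero, so \eqref{posi3} and \eqref{posi4} hold for a uniformly small arc step. The uniform control of $\dot{\v}$ and $\ddot{\v}$ that you correctly flag as the only real work is exactly what the paper supplies later, in scaled form, via Lemmas~\ref{main1} and~\ref{extension}, and the fully quantitative version of this proposition (with $\sin(\bar\alpha)\ge C_4/\sqrt{n}$) is Lemma~\ref{positiveCond}.
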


We will also need the following results in the rest discussions.

\begin{lemma}
Let $\dot{\v}$ and $\ddot{\v}$ be defined as in (\ref{firstOrder})
and (\ref{secondOrder}), and let ${\p}$ and ${\q}$ be defined 
as in (\ref{secondOrderP}) and (\ref{secondOrderQ}). Then the following relations hold.
\begin{subequations}
\begin{gather}
\ddot{\x}^{\Tr} \A_I^T \ddot{\w} = \ddot{\x}^{\Tr} \A_I^T \ddot{\z} 
=\ddot{\x}^{\Tr} \H  \ddot{\x} \ge 0,  \hspace{0.1in}
  \ddot{\s}^{\Tr}  \ddot{\z}  \ge 0. \label{ineqA} \\
\p_{\x}^{\Tr} \A_I^T \p_{\z} 
=\p_{\x}^{\Tr} \H  \p_{\x} \ge 0,  \hspace{0.1in}
  \p_{\s}^{\Tr}  \p_{\z}  \ge 0.  \label{ineqB} \\
\q_{\x}^{\Tr} \A_I^T \q_{\z} 
=\q_{\x}^{\Tr} \H  \q_{\x} \ge 0,  \hspace{0.1in}
  \q_{\s}^{\Tr}  \q_{\z}  \ge 0.   \label{ineqC} 
\end{gather}
\label{cross}
\end{subequations}
\end{lemma}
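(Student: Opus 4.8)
The plan is to read off all three groups of identities from the linear systems (\ref{secondOrderP}), (\ref{secondOrderQ}), and (\ref{secondOrder}) by the same device: left-multiply selected block rows by appropriate vectors and combine, exactly in the spirit of the nonsingularity argument in Theorem \ref{nonsingular}. I will treat (\ref{ineqB}) and (\ref{ineqC}) in detail, since (\ref{ineqA}) then follows from $\ddot\v=\p\sigma+\q$ together with the cross terms (or, more directly, by repeating the argument on (\ref{secondOrder}) itself, whose last block has right-hand side $-2\dot\Z\dot\s+\sigma\mu\e$).

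First I would extract the purely algebraic relations that hold for any solution of a system with the coefficient matrix displayed in (\ref{secondOrderP}). From block rows two and three, $\A_E\p_\x=\0$ and $\A_I\p_\x=\p_\s$; from block row four, $\p_\w=\p_\z$; and block row one reads $\H\p_\x+\A_E^{\Tr}\p_\y-\A_I^{\Tr}\p_\w=\0$. Multiplying block row one on the left by $\p_\x^{\Tr}$ and using $\A_E\p_\x=\0$ gives $\p_\x^{\Tr}\H\p_\x-\p_\x^{\Tr}\A_I^{\Tr}\p_\w=0$, i.e. $\p_\x^{\Tr}\A_I^{\Tr}\p_\w=\p_\x^{\Tr}\H\p_\x$; since $\p_\w=\p_\z$ this is also $\p_\x^{\Tr}\A_I^{\Tr}\p_\z=\p_\x^{\Tr}\H\p_\x$, and this quantity is $\ge 0$ because $\H=\nabla_\x^2 f(\x)$ is positive (semi)definite along the iterates. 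For the slack relation, block row five gives $\Z\p_\s+\S\p_\z=\mu\e$; what I actually need is $\p_\s^{\Tr}\p_\z\ge 0$. The cleanest route is to observe $\p_\s^{\Tr}\p_\z=(\A_I\p_\x)^{\Tr}\p_\z=\p_\x^{\Tr}\A_I^{\Tr}\p_\z=\p_\x^{\Tr}\H\p_\x\ge 0$, so no positivity of $(\s,\z)$ is even needed here. This proves (\ref{ineqB}).

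The argument for (\ref{ineqC}) is word-for-word the same, because the coefficient matrix in (\ref{secondOrderQ}) is identical and only the right-hand side has changed (the fifth block becomes $-2\dot\Z\dot\s$, and blocks one through four are still zero). So $\A_E\q_\x=\0$, $\A_I\q_\x=\q_\s$, $\q_\w=\q_\z$, and $\H\q_\x+\A_E^{\Tr}\q_\y-\A_I^{\Tr}\q_\w=\0$; left-multiplying by $\q_\x^{\Tr}$ yields $\q_\x^{\Tr}\A_I^{\Tr}\q_\z=\q_\x^{\Tr}\H\q_\x\ge 0$, and then $\q_\s^{\Tr}\q_\z=\q_\x^{\Tr}\A_I^{\Tr}\q_\z=\q_\x^{\Tr}\H\q_\x\ge 0$. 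Finally, for (\ref{ineqA}) I would run the same computation on (\ref{secondOrder}): the first four blocks of its right-hand side are zero (after dropping the third-order term), so $\A_E\ddot\x=\0$, $\A_I\ddot\x=\ddot\s$, $\ddot\w=\ddot\z$, and $\H\ddot\x+\A_E^{\Tr}\ddot\y-\A_I^{\Tr}\ddot\w=\0$; left-multiplying by $\ddot\x^{\Tr}$ gives $\ddot\x^{\Tr}\A_I^{\Tr}\ddot\w=\ddot\x^{\Tr}\A_I^{\Tr}\ddot\z=\ddot\x^{\Tr}\H\ddot\x\ge 0$ and $\ddot\s^{\Tr}\ddot\z=\ddot\x^{\Tr}\H\ddot\x\ge 0$. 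Alternatively one may simply substitute $\ddot\v=\p\sigma+\q$ and expand, using (\ref{ineqB}), (\ref{ineqC}) and the mixed term $\p_\x^{\Tr}\A_I^{\Tr}\q_\z=\p_\x^{\Tr}\H\q_\x$ (obtained the same way), but the direct route is shorter.

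I do not anticipate a genuine obstacle here; the only thing to be careful about is bookkeeping of which block rows are zero after the third-order term $(\nabla_\x^3 f(\x))\dot\x\dot\x$ is omitted — this is exactly what makes the first four blocks of the right-hand side of (\ref{secondOrder}) vanish and lets the Lagrangian-gradient block row be multiplied by $\ddot\x^{\Tr}$ cleanly. A secondary point worth a sentence is that $\H\succeq 0$ on $\R^n$ (not merely on a subspace), so $\ddot\x^{\Tr}\H\ddot\x\ge 0$, $\p_\x^{\Tr}\H\p_\x\ge 0$, $\q_\x^{\Tr}\H\q_\x\ge 0$ all hold unconditionally; this is where convexity of $f$ enters.
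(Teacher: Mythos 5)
Your proposal is correct and follows essentially the same route as the paper's proof: left-multiply the first block row by the transpose of the $\x$-component, kill the $\A_E$ term using the second block row, identify the $\w$- and $\z$-components via the fourth block row, and convert $\ddot\s^{\Tr}\ddot\z$ (resp. $\p_\s^{\Tr}\p_\z$, $\q_\s^{\Tr}\q_\z$) into the same quadratic form in $\H$ via the third block row. The only cosmetic difference is that the paper pre-multiplies the third block row by $\ddot\z^{\Tr}$ while you substitute $\p_\s=\A_I\p_\x$ directly, which is the same computation.
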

\begin{proof}
Pre-multiplying $\ddot{\x}^{\Tr}$ in the first line of (\ref{secondOrder})
gives
\[
\ddot{\x}^{\Tr} \H  \ddot{\x}+\ddot{\x}^{\Tr} \A_E^{\Tr}\ddot{\y}
-\ddot{\x}^{\Tr} \A_I^{\Tr}\ddot{\w} =0.
\]
From the second line of (\ref{secondOrder}), we have
$\ddot{\x}^{\Tr} \A_E^{\Tr} =\0$. Therefore,
$\ddot{\x}^{\Tr} \H  \ddot{\x}=\ddot{\x}^{\Tr} \A_I^{\Tr}\ddot{\w} \ge 0$
because $\H $ is positive definite.

Pre-multiplying $\ddot{\z}^{\Tr}$ in the third line of (\ref{secondOrder})
gives
\[
\ddot{\z}^{\Tr}  \A_I \ddot{\x} -\ddot{\z}^{\Tr}\ddot{\s} =0.
\]
Therefore, using $\ddot{\z}=\ddot{\w}$, 
$\ddot{\z}^{\Tr}\ddot{\s} = \ddot{\z}^{\Tr}  \A_I \ddot{\x}
=\ddot{\x}^{\Tr} \H \ddot{\x} \ge 0$. Similarly, we can prove (\ref{ineqB})
and (\ref{ineqC}) using (\ref{secondOrderP}) and (\ref{secondOrderQ}).
\hfill \qed
\end{proof}

In addition, we need two simple sinusoidal identities in our proofs.
\begin{lemma}
\begin{subequations}
\begin{gather}
\sin^2(\alpha)-2(1-\cos(\alpha))=-(1-\cos(\alpha))^2, \label{subeq}
\\ 
\sin^2(\alpha) \ge 1-\cos(\alpha) \ge \frac{1}{2} \sin^2(\alpha).
\label{subineq}
\end{gather}
\end{subequations}
\label{sincos}
\end{lemma}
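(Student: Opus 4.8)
The plan is to prove the two sinusoidal identities by elementary trigonometry, treating \eqref{subeq} as an exact algebraic identity and \eqref{subineq} as a pair of inequalities valid for all real $\alpha$.

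For \eqref{subeq}, I would start from the right-hand side and expand: $-(1-\cos(\alpha))^2 = -1 + 2\cos(\alpha) - \cos^2(\alpha)$. Then, using the Pythagorean identity in the form $-\cos^2(\alpha) = \sin^2(\alpha) - 1$, this becomes $\sin^2(\alpha) - 2 + 2\cos(\alpha) = \sin^2(\alpha) - 2(1-\cos(\alpha))$, which is exactly the left-hand side. Alternatively one can simply move all terms to one side and recognize $\sin^2(\alpha) + (1-\cos(\alpha))^2 = 2(1-\cos(\alpha))$ as a rearrangement of $2 - 2\cos(\alpha) = 2 - 2\cos(\alpha)$ after expanding the square. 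Either route is a two-line computation.

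For \eqref{subineq}, I would use the half-angle substitution. Write $1 - \cos(\alpha) = 2\sin^2(\alpha/2)$ and $\sin^2(\alpha) = 4\sin^2(\alpha/2)\cos^2(\alpha/2)$. The right inequality $1-\cos(\alpha) \ge \tfrac12\sin^2(\alpha)$ becomes $2\sin^2(\alpha/2) \ge 2\sin^2(\alpha/2)\cos^2(\alpha/2)$, i.e. $\sin^2(\alpha/2)\bigl(1 - \cos^2(\alpha/2)\bigr) \ge 0$, which holds trivially since $\cos^2(\alpha/2) \le 1$. The left inequality $\sin^2(\alpha) \ge 1 - \cos(\alpha)$ becomes $4\sin^2(\alpha/2)\cos^2(\alpha/2) \ge 2\sin^2(\alpha/2)$, i.e. $2\sin^2(\alpha/2)\bigl(2\cos^2(\alpha/2) - 1\bigr) \ge 0$, equivalently $\sin^2(\alpha/2)\cos(\alpha) \ge 0$; this is \emph{not} true for all $\alpha$. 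This tells me the intended range must be restricted — presumably $\alpha \in [0,\pi/2]$, where $\cos(\alpha) \ge 0$ and the claim holds. So in the write-up I would either state the identities for $\alpha \in [0,\pi/2]$ (the range of step sizes actually used in the algorithm) or, for the left inequality, argue directly: on $[0,\pi/2]$ one has $\sin(\alpha) \ge 0$ and $1-\cos(\alpha) \le \sin(\alpha)$ since $\cos(\alpha) + \sin(\alpha) \ge 1$ there (both summands nonnegative, and the function $\cos(\alpha)+\sin(\alpha) = \sqrt2\sin(\alpha+\pi/4) \ge 1$ on $[0,\pi/2]$), hence $1-\cos(\alpha) \le \sin(\alpha) \le \sin^2(\alpha)/\sin(\alpha)$... no, rather multiply $1-\cos(\alpha) \le \sin(\alpha)$ by $1+\cos(\alpha) \le 1 + \text{(something)}$; cleanest is: $1-\cos(\alpha) \le \sin(\alpha)$ and $\sin(\alpha) \le \sin^2(\alpha)$ fails, so instead write $1-\cos(\alpha) = \frac{\sin^2(\alpha)}{1+\cos(\alpha)} \le \sin^2(\alpha)$ since $1+\cos(\alpha) \ge 1$ on $[0,\pi/2]$. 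That last identity, $1-\cos(\alpha) = \sin^2(\alpha)/(1+\cos(\alpha))$, is the slick way to get both halves at once on the relevant interval.

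The only genuine obstacle is the domain issue just flagged: the chain \eqref{subineq} as written is false for general real $\alpha$ (e.g. $\alpha = \pi$ gives $0 \ge 2$), so the proof must pin down — and the surrounding algorithm must enforce — that $\alpha^k$ lies in $[0,\pi/2]$. Granting that, everything reduces to the identity $1-\cos(\alpha) = \sin^2(\alpha)/(1+\cos(\alpha))$ together with the bounds $1 \le 1+\cos(\alpha) \le 2$ on $[0,\pi/2]$, which immediately yields $\tfrac12\sin^2(\alpha) \le 1-\cos(\alpha) \le \sin^2(\alpha)$, and \eqref{subeq} is the routine expansion above. I do not expect any deeper difficulty.
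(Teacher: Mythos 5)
Your proof is correct. The paper itself omits the proof of Lemma \ref{sincos} as ``straightforward,'' so there is no argument to compare against; your computation for \eqref{subeq} and the identity $1-\cos(\alpha)=\sin^2(\alpha)/(1+\cos(\alpha))$ combined with $1\le 1+\cos(\alpha)\le 2$ for \eqref{subineq} is exactly the kind of two-line verification the author had in mind. Your observation that the left inequality of \eqref{subineq} fails for general real $\alpha$ (e.g.\ $\alpha=\pi$) and requires $\alpha\in[0,\pi/2]$ is a legitimate point the paper glosses over; the restriction is harmless in context, since the step sizes $\alpha_k$ produced by the algorithm are confined to $[0,\pi/2]$ (as is explicit in the formulas of Appendix~\ref{sec:selection}, where $\pi/2$ is the cap on every $\alpha_{s_i}$ and $\alpha_{z_i}$), but a careful statement of the lemma should include it.
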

\begin{proof}
%\begin{eqnarray}
%& & \sin^2(\alpha)-2(1-\cos(\alpha)) 
%\nonumber \\
%& = & 1- \cos^2(\alpha)-2(1-\cos(\alpha))
%\nonumber \\
%& =  & -1 + 2\cos(\alpha) - \cos^2(\alpha)
%\nonumber \\
%& =  &-(1-\cos(\alpha))^2.
%\end{eqnarray}
%This completes the proof.
The proof is straightforward, therefore it is omitted.
\hfill \qed
\end{proof}

The above two lemmas, together with  (\ref{firstOrder}), (\ref{secondOrder}), 
(\ref{allAlpha}), (\ref{secondOrderP}), and (\ref{secondOrderQ}),
will be used to calculate the value $\mu(\alpha)$.

\begin{proposition}
Let ${\alpha}_k$ be the step size at $k$th iteration for 
$\s(\sigma_k, \alpha_k)$, $\z(\sigma_k, \alpha_k)$ defined in 
Theorem \ref{ellipseSX}. Then, the updated duality measure after the
iteration of $k$ can be expressed as 
\begin{equation}
\mu_{k+1} := \mu(\sigma_k,{\alpha}_k) =
\frac{1}{p} \left[ a_u(\alpha_k)  \sigma_k + b_u(\alpha_k) \right],
\label{usigmaAlpha}
\end{equation}
where 
\[a_u(\alpha_k)=p \mu_k (1-\cos(\alpha_k)) - (\dot{\z}^{\T} {\p}_{\s}
+\dot{\s}^{\T} {\p}_{\z}) 
\sin(\alpha_k)(1-\cos(\alpha_k))\] 
and 
\[
b_u(\alpha_k)=p \mu_k (1-\sin(\alpha_k)) - [\dot{\z}^{\T} \dot{\s} (1-\cos(\alpha_k))^2
+(\dot{\s}^{\T}\q_{\z} + \dot{\z}^{\T} \q_{\s}) \sin(\alpha_k)(1-\cos(\alpha_k))]
\]
are coefficients which are functions of $\alpha_k$. Moreover
$a_u(\alpha_k)={O}(p \mu_k \sin^2(\alpha))$ and 
$b_u(\alpha_k)={O}(p \mu_k (1-\sin(\alpha_k)))$.
\label{simple2}
\end{proposition}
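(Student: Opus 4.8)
The plan is to compute $\mu_{k+1} = \frac{1}{p}\,(\z^{k+1})^{\T}\s^{k+1}$ directly from the update formula \eqref{allAlpha}, using the decomposition $\ddot{\v} = \p\sigma_k + \q$, and then collect terms according to their dependence on $\sigma_k$. First I would write, with $c:=\cos(\alpha_k)$ and $s:=\sin(\alpha_k)$ for brevity,
\[
\s^{k+1} = \s^k - \dot{\s}\,s + (\q_{\s} + \p_{\s}\sigma_k)(1-c),
\qquad
\z^{k+1} = \z^k - \dot{\z}\,s + (\q_{\z} + \p_{\z}\sigma_k)(1-c),
\]
and expand the inner product $(\z^{k+1})^{\T}\s^{k+1}$ as a sum of nine terms. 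The strategy is then to simplify each term using: (i) the bottom rows of \eqref{firstOrder}, \eqref{secondOrderP}, \eqref{secondOrderQ}, which give $\Z\dot{\s} + \S\dot{\z} = \Z\s$ (hence $\dot{\z}^{\T}\s^k + \dot{\s}^{\T}\z^k = (\z^k)^{\T}\s^k = p\mu_k$), $\Z\p_{\s} + \S\p_{\z} = \mu\e$ (hence $\p_{\z}^{\T}\s^k + \p_{\s}^{\T}\z^k = \e^{\T}\mu\e = p\mu_k$), and $\Z\q_{\s} + \S\q_{\z} = -2\dot{\Z}\dot{\s}$ (hence $\q_{\z}^{\T}\s^k + \q_{\s}^{\T}\z^k = -2\dot{\s}^{\T}\dot{\z}$); and (ii) Lemma \ref{cross}, specifically $\p_{\s}^{\T}\p_{\z}\ge 0$, $\q_{\s}^{\T}\q_{\z}\ge 0$, to handle (or bound) the pure second-order terms, together with the identity \eqref{subeq} of Lemma \ref{sincos} to rewrite $s^2(\z^k)^{\T}\s^k$ combinations.

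The bookkeeping goes as follows. The $\sigma_k^0$-part collects $(\z^k)^{\T}\s^k$, the cross terms $-s(\dot{\z}^{\T}\s^k + \dot{\s}^{\T}\z^k)$, $+(1-c)(\q_{\z}^{\T}\s^k + \q_{\s}^{\T}\z^k)$, and the quadratic terms $s^2\dot{\z}^{\T}\dot{\s}$, $-s(1-c)(\dot{\z}^{\T}\q_{\s} + \dot{\s}^{\T}\q_{\z})$, $(1-c)^2\q_{\z}^{\T}\q_{\s}$. Using the identities above, $(\z^k)^{\T}\s^k - s\,p\mu_k = p\mu_k(1-s)$, while the $(1-c)$ cross term becomes $-2(1-c)\dot{\s}^{\T}\dot{\z}$, which combines with $s^2\dot{\z}^{\T}\dot{\s}$ via \eqref{subeq} to give $-(1-c)^2\dot{\s}^{\T}\dot{\z}$. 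Dropping the nonnegative $(1-c)^2\q_{\z}^{\T}\q_{\s}$ term (or, more precisely, noting it is dominated and absorbing it consistently with how the paper states $b_u$) yields exactly $b_u(\alpha_k) = p\mu_k(1-s) - [\dot{\z}^{\T}\dot{\s}(1-c)^2 + (\dot{\s}^{\T}\q_{\z}+\dot{\z}^{\T}\q_{\s})s(1-c)]$. The $\sigma_k^1$-part collects $(1-c)(\p_{\z}^{\T}\s^k + \p_{\s}^{\T}\z^k) = (1-c)p\mu_k$ and the quadratic terms $-s(1-c)(\dot{\z}^{\T}\p_{\s} + \dot{\s}^{\T}\p_{\z})$, plus $(1-c)^2(\p_{\z}^{\T}\q_{\s} + \p_{\s}^{\T}\q_{\z})$; the first two give $a_u(\alpha_k)$ as stated, and again the remaining $(1-c)^2$ term is of higher order. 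Finally, the order estimates follow from \eqref{subineq}: $(1-c) = O(s^2)$, so $a_u = p\mu_k(1-c) + O(s^3) = O(p\mu_k\,s^2)$, and $b_u = p\mu_k(1-s) + O(s^2) = O(p\mu_k(1-s))$, where one also uses boundedness of $\dot{\v},\p,\q$ which follows from Assumption (A3') keeping the Jacobian nonsingular.

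The main obstacle I anticipate is not any single computation but the consistent treatment of the genuinely second-order cross terms $\p_{\z}^{\T}\q_{\s} + \p_{\s}^{\T}\q_{\z}$ and $\q_{\z}^{\T}\q_{\s}$: the stated $a_u$ and $b_u$ do not literally contain them, so I must verify either that they cancel, or — more likely — that the proposition is implicitly absorbing them into the $O(\cdot)$ remainder, in which case the displayed formulas \eqref{usigmaAlpha} should be read as holding up to these controllable higher-order contributions. I would resolve this by checking whether $\ddot{\v} = \p\sigma + \q$ forces a sign/structure relation (e.g. whether $\q_{\s}^{\T}\p_{\z} + \q_{\z}^{\T}\p_{\s}$ has a definite sign via the same mixed-product argument as in Lemma \ref{cross}), and in any case I would present the exact expansion first and then state precisely which terms are folded into the asymptotic claim. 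The order bounds themselves are the easy part once \eqref{subineq} and boundedness are in hand.
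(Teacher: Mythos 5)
Your plan is the natural direct computation and is surely what the paper has in mind (its own proof is omitted, deferring to \cite{yang18}); the expansion, the use of the last block rows of (\ref{firstOrder}), (\ref{secondOrderP}), (\ref{secondOrderQ}) to get $\dot{\z}^{\T}\s^k+\dot{\s}^{\T}\z^k=p\mu_k$, $\p_{\z}^{\T}\s^k+\p_{\s}^{\T}\z^k=p\mu_k$, $\q_{\z}^{\T}\s^k+\q_{\s}^{\T}\z^k=-2\dot{\s}^{\T}\dot{\z}$, and the identity (\ref{subeq}) to merge $\sin^2(\alpha_k)\dot{\z}^{\T}\dot{\s}$ with $-2(1-\cos(\alpha_k))\dot{\z}^{\T}\dot{\s}$ are all correct and reproduce $a_u$ and $b_u$ exactly.

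The one point you flag as an open question can be settled, and it is worth doing so because it is the only place where this proposition differs from its LP ancestor. The exact expansion leaves the single remainder $(1-\cos(\alpha_k))^2\,\ddot{\s}^{\T}\ddot{\z}$, and this does \emph{not} cancel here: by Lemma \ref{cross} it equals $(1-\cos(\alpha_k))^2\,\ddot{\x}^{\T}\H\ddot{\x}\ge 0$, which is generically strictly positive since $\H$ is positive definite. (In the LP setting of \cite{yang18} the analogous product $\ddot{\x}^{\T}\ddot{\s}$ vanishes identically because there is no Hessian, which is why the formula is an exact identity there.) Two consequences you should make explicit. First, expanding $\ddot{\v}=\p\sigma_k+\q$ inside this remainder produces a term $(1-\cos(\alpha_k))^2\sigma_k^2\,\p_{\s}^{\T}\p_{\z}$, so the exact $p\mu_{k+1}$ is quadratic, not affine, in $\sigma_k$; your bookkeeping omits assigning this $\sigma_k^2$ piece anywhere. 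Second, the sign matters: since the remainder is nonnegative, discarding it gives $p\mu_{k+1}\ge a_u\sigma_k+b_u$, i.e.\ an \emph{under}estimate, whereas Lemma \ref{objDec} needs an \emph{upper} bound on $\mu_{k+1}$. So the remainder cannot simply be dropped; it must be bounded above, e.g.\ by $(1-\cos(\alpha_k))^2\|\D^{-1}\ddot{\s}\|\,\|\D\ddot{\z}\|\le C_3^2n^2\mu_k\sin^4(\alpha_k)$ via Lemmas \ref{extension} and \ref{sincos}, and carried as an explicit higher-order correction to (\ref{usigmaAlpha}). With that amendment (state the exact identity $p\mu_{k+1}=a_u\sigma_k+b_u+(1-\cos(\alpha_k))^2\ddot{\s}^{\T}\ddot{\z}$ and then control the last term), your argument is complete; the order estimates for $a_u$ and $b_u$ themselves follow as you say from (\ref{subineq}) together with the bounds of Lemmas \ref{main1} and \ref{extension}, which is a mild forward reference the paper itself makes.
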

\begin{proof} 
The proof is similar to the one of \cite{yang18} and therefore omitted.
\hfill \qed
\end{proof}

\begin{remark}
Proposition \ref{simple2} shows that if a positive $\alpha$ small enough, the duality gap
is guaranteed to decrease.
\end{remark}

The following proposition assures that Condition (C4) will hold.

\begin{proposition}
There is an $\alpha_k$ bounded below and away
from zero for all $k$ such at (\ref{cond4}) holds, i.e.,
\begin{eqnarray}
& & s_i^{k+1} z_i^{k+1} \nonumber \\
& \ge &   \theta \mu_k (1 -\sin(\alpha_k) ) 
+\sigma_k \mu_k (1-\cos(\alpha_k)) 
  \nonumber \\ 
&  & - [\ddot{s}_i^k \dot{z}_i^k+\dot{s}_i^k \ddot{z}_i^k] \sin(\alpha_k) 
(1-\cos(\alpha_k))+ (\ddot{s}_i^k \ddot{z}_i^k-\dot{x}_i^k \dot{s}_i^k)
(1-\cos(\alpha_k))^2.
\label{thirdBound1}
\end{eqnarray}
\label{thirdBound}
\end{proposition}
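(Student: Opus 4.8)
The plan is to compute $s_i^{k+1} z_i^{k+1}$ directly from the arc-search update formula in Theorem~\ref{ellipseSX} and then bound the resulting expression from below. First I would write, using \eqref{allAlpha} componentwise,
\[
s_i^{k+1} z_i^{k+1}
= \bigl(s_i^k - \dot s_i^k \sin(\alpha_k) + \ddot s_i^k(1-\cos(\alpha_k))\bigr)
\bigl(z_i^k - \dot z_i^k \sin(\alpha_k) + \ddot z_i^k(1-\cos(\alpha_k))\bigr),
\]
and expand the product into nine terms, grouped by the powers of $\sin(\alpha_k)$ and $(1-\cos(\alpha_k))$ that appear. The leading term is $s_i^k z_i^k$; the terms linear in $\sin(\alpha_k)$ are $-(s_i^k \dot z_i^k + z_i^k \dot s_i^k)\sin(\alpha_k)$; the terms linear in $(1-\cos(\alpha_k))$ are $(s_i^k \ddot z_i^k + z_i^k \ddot s_i^k)(1-\cos(\alpha_k))$; and there are the three higher-order cross terms $-(\dot s_i^k \ddot z_i^k + \dot z_i^k \ddot s_i^k)\sin(\alpha_k)(1-\cos(\alpha_k))$, $\dot s_i^k \dot z_i^k \sin^2(\alpha_k)$, and $\ddot s_i^k \ddot z_i^k(1-\cos(\alpha_k))^2$.

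Next I would substitute the structural identities coming from the last rows of \eqref{firstOrder} and \eqref{secondOrder}, namely $z_i^k \dot s_i^k + s_i^k \dot z_i^k = z_i^k s_i^k$ (the $i$th component of $\Z\dot\s + \S\dot\z = \Z\s$) and $z_i^k \ddot s_i^k + s_i^k \ddot z_i^k = -2\dot z_i^k \dot s_i^k + \sigma_k \mu_k$ (the $i$th component of $\Z\ddot\s + \S\ddot\z = -2\dot\Z\dot\s + \sigma_k\mu_k\e$). These collapse the first-, $\sin$-, and $(1-\cos)$-order contributions to $s_i^k z_i^k(1 - \sin(\alpha_k)) + \sigma_k\mu_k(1-\cos(\alpha_k)) - 2\dot s_i^k \dot z_i^k (1-\cos(\alpha_k))$. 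Then I would use the sinusoidal identity \eqref{subeq}, $\sin^2(\alpha_k) - 2(1-\cos(\alpha_k)) = -(1-\cos(\alpha_k))^2$, to combine the $\dot s_i^k \dot z_i^k \sin^2(\alpha_k)$ cross term with the $-2\dot s_i^k\dot z_i^k(1-\cos(\alpha_k))$ piece, producing $-\dot s_i^k \dot z_i^k(1-\cos(\alpha_k))^2$. Finally, invoking Condition (C4) at iteration $k$ in the form $s_i^k z_i^k \ge \theta\mu_k$, and noting that wherever I wrote $-\dot x_i^k$ it should match the paper's stated right-hand side (the term $-\dot x_i^k \dot s_i^k$ in \eqref{thirdBound1} presumably absorbs the omitted $\nabla^3 f$ contribution or a typographical variant of $-\dot s_i^k \dot z_i^k$; I would reconcile this by tracking the exact form of the second-order system), the remaining terms assemble into exactly the right-hand side of \eqref{thirdBound1}.

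For the second assertion — that there is an $\alpha_k$ bounded below and away from zero for which \eqref{cond4} actually holds — I would argue as follows. As $\alpha_k \to 0$ we have $\sin(\alpha_k) = O(\alpha_k)$ and $1-\cos(\alpha_k) = O(\alpha_k^2)$, so the right-hand side of \eqref{thirdBound1} equals $\theta\mu_k - \theta\mu_k\sin(\alpha_k) + O(\alpha_k^2)$, while from Proposition~\ref{simple2} the target $\theta\mu_{k+1} = \frac{\theta}{p}[a_u(\alpha_k)\sigma_k + b_u(\alpha_k)] = \theta\mu_k(1-\sin(\alpha_k)) + O(\mu_k\alpha_k^2)$. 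Hence $s_i^{k+1}z_i^{k+1} - \theta\mu_{k+1} \ge (1-\theta)\,\theta\mu_k\sin(\alpha_k)\,?$ — more carefully, the dominant $O(\alpha_k)$ terms on both sides are the same multiple of $\mu_k\sin(\alpha_k)$ coming from the $(1-\sin(\alpha_k))$ factor, so they cancel, and the inequality $s_i^{k+1}z_i^{k+1}\ge\theta\mu_{k+1}$ reduces to a comparison of $O(\alpha_k^2)$ remainders whose coefficients are controlled by $\|\dot\v\|$, $\|\ddot\v\|$, $\sigma_k$ and $\mu_k$. Under Assumption (A3') the vectors $\dot\v,\ddot\v,\p,\q$ (solutions of \eqref{firstOrder}, \eqref{secondOrder}, \eqref{secondOrderP}, \eqref{secondOrderQ}) are bounded because the Jacobian is bounded away from singularity, so these remainder coefficients are uniformly bounded in $k$, and a single threshold $\alpha^* > 0$ works for all $k$.

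The main obstacle I anticipate is not the algebraic expansion, which is routine, but the careful bookkeeping needed to match the exact form of \eqref{thirdBound1} — in particular pinning down the term $-\dot x_i^k\dot s_i^k$ and making sure no $O(\alpha_k^2)$ contribution of the wrong sign has been dropped when the $\nabla^3 f$ term was omitted from \eqref{secondOrder}. Establishing the uniform lower bound on $\alpha_k$ then hinges entirely on the uniform boundedness of the search directions, which is exactly what (A3') and the nonsingularity results (Theorem~\ref{nonsingular} and the remark after it) are there to supply; the delicate point is that the first-order $\mu_k\sin(\alpha_k)$ terms must cancel so that the decisive comparison is genuinely at order $\alpha_k^2$.
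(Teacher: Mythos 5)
Your proposal is correct and follows exactly the argument the paper defers to \cite{yang18}: expand the product $s_i^{k+1}z_i^{k+1}$ from (\ref{allAlpha}), substitute the last rows of (\ref{firstOrder}) and (\ref{secondOrder}) to get $s_i\dot z_i+z_i\dot s_i=s_iz_i$ and $s_i\ddot z_i+z_i\ddot s_i=-2\dot s_i\dot z_i+\sigma_k\mu_k$, apply identity (\ref{subeq}), and invoke (C4) at iteration $k$; your reading of $-\dot x_i^k\dot s_i^k$ as a typographical remnant of $-\dot s_i^k\dot z_i^k$ from the LP setting is also right. The uniform lower bound on $\alpha_k$ is, as you suspect, really established later (Lemma \ref{barrier}) via the bounds of Lemma \ref{lemma5}, with the positive margin at order $\alpha_k^2$ coming from $(1-\theta)\sigma_k\mu_k(1-\cos\alpha_k)$, so your sketch of that part matches the paper's intended route as well.
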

\begin{proof}
The proof is similar to the one of \cite{yang18}, therefore is omitted.
\hfill \qed
\end{proof}

Propositions \ref{basic}, \ref{secondBound}, \ref{simple2}, and \ref{thirdBound}
indicate that an arc-search strategy will generate a sequence of iterates
that will eventually meet the approximated KKT
conditions, plus Condition (C4) defined by (\ref{cond4}). 
Therefore, we propose the following algorithm.

\begin{algorithm}\label{algorithm:arc-search} {\bf (an infeasible arc-search interior-point algorithm
	)} \newline
\upshape 
\indent Parameters: $\theta \in (0,1)$, and $\epsilon>0$.
\newline\indent Initial point: $\v^0 = (\x^0,\y^0,\w^0,\s^0,\z^0)$, 
$(\w^0,\s^0,\z^0) \in \R_{++}^{3p}$, and $\w^0=\z^0$.
\newline
\indent {\bf for} iteration $k=0,1,2,\ldots$
\begin{itemize}
	\item[] Step 1:  If $\| \g(\v^k) \| \le \epsilon$, stop.
	\item[] Step 2: Calculate $\nabla_{\x} L(\v^k)$, $\A_E \x^k-\b_E$, 
    $\A_I \x^k-\s^k -\b_I$, and $\H=\nabla_{\x}^2 L(\v^k)$.
	\item[] Step 3: Solve (\ref{firstOrder}) to get
	$\dot{\v}^k = (\dot{\x}^k,\dot{\y}^k,\dot{\w}^k,\dot{\s}^k,\dot{\z}^k)$.
	\item[] Step 4: Calculate $\dot{\Z} \dot{\s}$, $\p$, and $\q$.
	\item[] Step 5: Select $\sigma_k \in [0,1]$ and $\alpha_k > 0$ such that
    $\ddot{\v}^k=\p \sigma_k + \q$, $\v^{k+1} = \v^k\AN{\sigma_k,\alpha_k} = \v^k 
	- \dot{\v}^k \sin(\alpha_k) + \ddot{\v}^k (1-\cos(\alpha_k))$, 
    $\mu_{k+1} < \mu_k$, $(\s^{k+1},\w^{k+1},\z^{k+1})>\0$, and     
    $\Z^{k+1}\s^{k+1} \ge \theta \mu_{k+1} \e$.
	\item[] Step 6:  $k+1 \rightarrow k$ and go back to Step 1.
\end{itemize}
\indent\indent {\bf end (for)} 
\hfill \qed
\label{mainAlgo1}
\end{algorithm}

The next section shows that the proposed algorithm converges in
polynomial time.

\section{Convergence analysis}\label{sec:convergence}

In view of Propositions \ref{basic}, \ref{secondBound}, \ref{simple2}, 
and \ref{thirdBound}, the algorithm will generate a series 
of $\alpha_k$ which is bounded below and away from zero before the
algorithm terminates. Therefore, there exist a constant $\rho \in (0,1)$ 
satisfying $\rho \ge (1- \sin(\alpha_k))$ for all $k \ge 0$. Define
\begin{equation}
\beta_k = \frac{\min \{ \underline{s}_k , \underline{z}_k \}}{\nu_k} \ge 0,
\label{betak}
\end{equation}
and 
\begin{equation}
\beta = \displaystyle\inf_{k} \{ \beta_k \} \ge 0.
\label{beta}
\end{equation}

The next lemma is obtained in \cite{yang18} and it shows that $\beta$ is bounded below from zero. 

\begin{lemma}[\cite{yang18}]
Assuming that $\rho \in (0,1)$ is a constant and for all 
$k \ge 0$, $\rho \ge (1- \sin(\alpha_k))$. Let $\underline{s}_0=\min_i s_i^0$
and $\underline{z}_0=\min_i z_i^0$. Then, we have 
$\beta \ge \min \{ \underline{s}_0 , \underline{z}_0, 1 \}$.
\label{betaGt0}
\end{lemma}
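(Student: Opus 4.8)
The plan is to show by induction on $k$ that $\underline{s}_k \ge \nu_k \min\{\underline{s}_0,\underline{z}_0,1\}$ and $\underline{z}_k \ge \nu_k \min\{\underline{s}_0,\underline{z}_0,1\}$; dividing by $\nu_k > 0$ (Assumption (A4)) then gives $\beta_k \ge \min\{\underline{s}_0,\underline{z}_0,1\}$ for every $k$, and taking the infimum over $k$ yields the claim. Write $c = \min\{\underline{s}_0,\underline{z}_0,1\}$. The base case $k=0$ holds since $\nu_0$ is the empty product, hence $\nu_0 = 1$, so $\underline{s}_0 \ge \nu_0 c$ and $\underline{z}_0 \ge \nu_0 c$ are immediate from $c \le \underline{s}_0$ and $c \le \underline{z}_0$.

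For the inductive step, recall from (\ref{phikpsi}) that $\phi_k = \min\{\rho\underline{s}_k, \nu_k\}$ and $\psi_k = \min\{\rho\underline{z}_k,\nu_k\}$, and that the step-size selection in Step 5 of Algorithm \ref{mainAlgo1} enforces $\s^{k+1} \ge \phi_k \e$ and $\z^{k+1} \ge \psi_k \e$ (this is exactly conditions (\ref{posi1})–(\ref{posi2})). Hence $\underline{s}_{k+1} \ge \phi_k = \min\{\rho\underline{s}_k, \nu_k\}$. Now use $\nu_{k+1} = \nu_k(1-\sin(\alpha^k))$ together with the hypothesis $\rho \ge 1-\sin(\alpha^k)$: in the case $\phi_k = \rho\underline{s}_k$, the induction hypothesis $\underline{s}_k \ge \nu_k c$ gives $\underline{s}_{k+1} \ge \rho\nu_k c \ge (1-\sin(\alpha^k))\nu_k c = \nu_{k+1} c$; in the case $\phi_k = \nu_k$, we get $\underline{s}_{k+1} \ge \nu_k \ge \nu_k(1-\sin(\alpha^k)) = \nu_{k+1} \ge \nu_{k+1} c$ since $c \le 1$ and $\nu_{k+1} \le \nu_k$. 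Either way $\underline{s}_{k+1} \ge \nu_{k+1} c$, and the identical argument with $\psi_k$ and $\underline{z}$ gives $\underline{z}_{k+1} \ge \nu_{k+1} c$, completing the induction.

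The only subtlety — and the one point that must be stated carefully rather than computed — is the interface with the algorithm: one must invoke that $\rho$ can indeed be chosen in $(0,1)$ with $\rho \ge 1-\sin(\alpha^k)$ for all $k$, which is legitimate here because Propositions \ref{secondBound}, \ref{simple2}, \ref{thirdBound} guarantee the $\alpha^k$ produced by Step 5 are bounded below away from zero, and to check that the positivity enforcement $\s^{k+1}\ge\phi_k\e$, $\z^{k+1}\ge\psi_k\e$ is actually what the algorithm maintains (via (\ref{posi1})–(\ref{posi2})). There is no real analytic difficulty; the argument is a short two-case induction, which is why the paper attributes it to \cite{yang18} and it transfers verbatim once the quantities $\nu_k$, $\phi_k$, $\psi_k$ are set up as above.
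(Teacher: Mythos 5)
Your induction is correct and is precisely the standard argument behind this lemma in the cited reference \cite{yang18}; the paper itself omits the proof entirely, deferring to that citation. The two-case analysis on $\phi_k=\min\{\rho\underline{s}_k,\nu_k\}$ (and likewise $\psi_k$), combined with $\nu_{k+1}=\nu_k(1-\sin(\alpha^k))\le\rho\nu_k$ and the enforcement of (\ref{posi1})--(\ref{posi2}) in the algorithm, is exactly what is needed, and you correctly identify the only interface point that must be checked.
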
 
%\begin{proof}
%For $k=0$, $(\underline{s}_0,\underline{z}_0)>0$, and $\nu_0 =1$, therefore, 
%$\beta_0 \ge \min \{ \underline{s}_0, \underline{z}_0, 1 \}$ holds. Assuming that 
%$\beta_k \ge \min \{ \underline{s}_0, \underline{z}_0, 1 \}$ holds for $k>0$, we
%would like to show that $\beta_{k+1} =\min \{ \beta_k, 1 \}$ 
%holds for $k+1$. In view of (\ref{ijmath}), (\ref{phikpsi}), (\ref{posi1}), and (\ref{posi2}), 
%we divide our discussion into three cases.
%\begin{itemize}
%\item[] Case 1: $\min \{ \underline{s}_{k+1}, \underline{z}_{k+1} \}
%= \underline{s}_{k+1} \ge \rho \underline{s}_{k} \ge \underline{s}_{k} 
%(1- \sin(\alpha_k))$. Then we have
%\[
%\beta_{k+1} = \frac{\min \{ \underline{s}_{k+1}, \underline{z}_{k+1} \}}{\nu_{k+1}} 
%\ge \frac{ \underline{s}_{k} 
%(1- \sin(\alpha_k))}{\nu_k (1- \sin(\alpha_k))} \ge \beta_k. 
%\]
%\item[] Case 2: $\min \{ \underline{s}_{k+1}, \underline{z}_{k+1} \}
%= \underline{z}_{k+1} \ge \rho \underline{z}_{k} \ge \underline{z}_{k} 
%(1- \sin(\alpha_k))$. Then we have
%\[
%\beta_{k+1} = \frac{\min \{ \underline{s}_{k+1}, \underline{z}_{k+1} \}}{\nu_{k+1}} 
%\ge \frac{ \underline{z}_{k} 
%(1- \sin(\alpha_k))}{\nu_k (1- \sin(\alpha_k))} \ge \beta_k. 
%\]
%\item[] Case 3: $\min \{ \underline{s}_{k+1}, \underline{z}_{k+1} \}
%\ge \nu_k$. Then we have
%\[
%\beta_{k+1} = \frac{\min \{ \underline{s}_{k+1}, \underline{z}_{k+1} \}}{\nu_{k+1}} 
%\ge \frac{ \nu_k}{\nu_k (1- \sin(\alpha_k))} \ge 1. 
%\]
%\end{itemize}
%Adjoining these cases, we conclude $\beta \ge \min \{ \beta_0, 1 \}$.
%\hfill \qed
%\end{proof}

Let  $\D= \S^{\frac{1}{2}} \Z^{-\frac{1}{2}}=\diag(D_{ii})$. The
next result can be derived using the same method in \cite{yang18}. 

\begin{lemma} [\cite{yang18}]
For Algorithm \ref{mainAlgo1}, there is a constant $C_1$ independent 
of $n$ and $m$ such that for $\forall i \in \{ 1, \ldots, n \}$
\begin{equation}
(D^{k}_{ii})^{-1}  \nu_k = \nu_k \sqrt{\frac{z_i^k}{s_i^k}}
\le C_1 \sqrt{n \mu_k}, \hspace{0.1in}
D_{ii}^k  \nu_k = \nu_k \sqrt{\frac{s_i^k}{z_i^k}}
\le C_1 \sqrt{n \mu_k}.
\end{equation}
\label{cond1}
\end{lemma}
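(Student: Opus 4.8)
The plan is to read off both inequalities from two elementary one-sided bounds on each coordinate: the lower bound comes from Lemma \ref{betaGt0}, and the upper bound from the definition of $\mu_k$ together with positivity of the iterate. No use of the step equations (\ref{firstOrder})--(\ref{secondOrderQ}) is needed here; the substantive work has already been carried out in Lemma \ref{betaGt0}, and the present lemma is a short corollary.

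First I would record that for every index $i$ one has $s_i^k z_i^k \le p\mu_k$: by Condition (C2) all coordinates of $(\s^k,\z^k)$ are positive, so every summand in $\s^{k\Tr}\z^k=\sum_{j=1}^p s_j^k z_j^k = p\mu_k$ is positive and hence no single summand can exceed the total. Next I would invoke Lemma \ref{betaGt0}, which gives $\min_i s_i^k\ge\beta\nu_k$ and $\min_i z_i^k\ge\beta\nu_k$ with $\beta\ge\min\{\underline{s}_0,\underline{z}_0,1\}>0$; in particular $s_i^k\ge\beta\nu_k$ and $z_i^k\ge\beta\nu_k$ for all $i$. Combining the two facts, for any $i$,
\[
\nu_k^2\,\frac{z_i^k}{s_i^k}=\nu_k^2\,\frac{s_i^k z_i^k}{(s_i^k)^2}\le \nu_k^2\,\frac{p\mu_k}{(\beta\nu_k)^2}=\frac{p\mu_k}{\beta^2},
\]
so $(D^k_{ii})^{-1}\nu_k=\nu_k\sqrt{z_i^k/s_i^k}\le \sqrt{p\mu_k}/\beta=:C_1\sqrt{n\mu_k}$ with $C_1=\sqrt{p}/(\beta\sqrt n)$; the companion bound $D^k_{ii}\nu_k=\nu_k\sqrt{s_i^k/z_i^k}\le C_1\sqrt{n\mu_k}$ follows in exactly the same way after swapping the roles of $s_i^k$ and $z_i^k$ and using $z_i^k\ge\beta\nu_k$.

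The only point requiring care is the uniformity of $C_1$: $\beta$ is bounded below by $\min\{\underline s_0,\underline z_0,1\}$, a quantity fixed by the starting point alone, so $C_1$ does not depend on the iteration $k$, and it is independent of $n$ and $m$ as claimed once the starting point is normalized so that $\underline s_0,\underline z_0\ge1$ and $p=O(n)$ (as in the applications). An alternative that transcribes the argument of \cite{yang18} more literally is to take $\bar\v$ the KKT solution provided by (P1), form the combination $\v^k-\nu_k\v^0-(1-\nu_k)\bar\v$, use Proposition \ref{basic} to see that this combination is annihilated by the constraint rows, and extract the estimate from the resulting orthogonality relation together with positive definiteness of $\H$; since Lemma \ref{betaGt0} is already at hand, that longer route is unnecessary. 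I therefore do not anticipate a genuine obstacle: the lemma is essentially a two-line consequence of Lemma \ref{betaGt0} and Condition (C2), and the only thing to watch is the bookkeeping of the constant so that the estimate appears in the stated form $C_1\sqrt{n\mu_k}$.
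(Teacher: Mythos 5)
Your proof is correct and is exactly the standard argument the paper has in mind (the paper itself gives no proof here, deferring to \cite{yang18}, where the lemma is established precisely by combining $s_i^k z_i^k\le \s^{k\Tr}\z^k=p\mu_k$ with the lower bound $\min\{\underline{s}_k,\underline{z}_k\}\ge\beta\nu_k$ from Lemma \ref{betaGt0}, giving $C_1=1/\beta$). The only wrinkle — that your bound naturally comes out as $\sqrt{p\mu_k}/\beta$ rather than $C_1\sqrt{n\mu_k}$ — is inherited from the paper's own conflation of $p$ and $n$ (note the lemma indexes $i$ up to $n$ although $\s^k,\z^k\in\R^p$, and the proof of Lemma \ref{wright} likewise writes $\sqrt{\s^{\Tr}\z}=\sqrt{n\mu}$), so it is not a gap in your argument.
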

%\begin{proof}
%%For the completeness, we provide the proof here.
%We will use the following facts: $\min \{ \underline{s}_k, \underline{z}_k \}>0$,
%$\nu_k > 0$, and $\beta>0$ is a constant independent of $n$ and $m$. By the 
%definition of $\beta_k$, we have 
%$s_i^k \ge \underline{s}_k \ge \beta_k \nu_k \ge \beta \nu_k$ and 
%$z_j^k \ge \underline{z}_k \ge \beta_k \nu_k \ge \beta \nu_k$.
%This gives, for $\forall i \in \{1, \ldots, n\}$,
%\begin{equation}
%(D^{k}_{ii})^{-1}  \nu_k = \sqrt{\frac{z_i^k}{s_i^k}} \nu_k 
%\le \frac{1}{\beta} \sqrt{z_i^k s_i^k} \le \frac{1}{\beta} 
%\sqrt{n \mu_k} :=C_1 \sqrt{n \mu_k}. 
%\nonumber 
%\end{equation}
%Using a similar argument for $z_j^k \ge \beta \nu_k$, we can show
%that $ D_{ii} \nu_k \le C_1 \sqrt{n \mu_k}$.
%This completes the proof.
%\hfill \qed
%\end{proof}

Note that $\bar{\w}=\bar{\z}$. Let 
$(\bar{\x}, \bar{\y}, \bar{\w}, \bar{\s}, \bar{\z})$ be an 
optimal solution satisfying 
\begin{subequations}
\begin{gather}
\H  \bar{\x}+ \A_E^{\T}\bar{\y}  -\A_I^{\T} \bar{\z} = \0, 
\label{fa} \\
\A_E \bar{\x} = \b_E, \label{fb} \\
\A_I \bar{\x} -\bar{\s} = \b_I. \label{fc} 
\end{gather}
\label{feasible}
\end{subequations}
%The existence of optimal solution $(\bar{\x}, \bar{\y}, \bar{\w}, \bar{\s}, \bar{\z})$ 
%is guaranteed by the assumption that the approximate KKT conditions hold. 
The following assumption means that the distance between
the initial point and the optimal solution is bounded, which is reasonable.
\newline
\newline
{\bf Assumption:}
\begin{itemize}
\item[(A5)] {\it There is a big constant $M$ which is independent 
to the problem size $n$ and $m$ such that the distance between
initial point $(\s^0, \z^0)$ and $(\bar{\s}, \bar{\z})$ is smaller
than $M$, i.e.,
$\| \left( \s^0-\bar{\s}, \z^0-\bar{\z} \right) \| < M$. }
\end{itemize}

\begin{lemma}
For an optimal solution $(\bar{\x}, \bar{\y}, \bar{\w}, \bar{\s}, \bar{\z})$
of (\ref{CP}) that meets (\ref{feasible}), the following linear systems of equations
\begin{equation}
\left[ \begin{array}{cccc}
\H & \A_E^{\T} & - \A_I^{\T} & \0 \\
\A_E & \0  & \0  & \0 \\
\A_I & \0  & \0  & -\I \\
\0  & \0  &  \S  & \Z
\end{array} \right]
\left[ \begin{array}{c}
\delta \x \\ \delta \y  \\ \delta \z  \\ \delta \s 
\end{array} \right]
=\left[ \begin{array}{c}
\0 \\ \0 \\ \0 \\
\z\s -\nu_k \s(\z^0-\bar{\z}) -\nu_k \z(\s^0-\bar{\s})
\end{array} \right].
\label{deltaEq}
\end{equation}
has a solution given by
\begin{equation}
\left[ \begin{array}{c}
\delta \x \\ \delta \y  \\ \delta \z  \\ \delta \s 
\end{array} \right]
=\left[ \begin{array}{c}
\dot{\x} - \nu_k (\x^0 - \bar{\x}) )  \\
\dot{\y} - \nu_k (\y^0 - \bar{\y}) )  \\
\dot{\z} - \nu_k (\z^0 - \bar{\z}) )  \\
\dot{\s} - \nu_k (\s^0 - \bar{\s}) ) 
\end{array} \right].
\label{deltaSolution}
\end{equation}
\label{dettaEqLemma}
\end{lemma}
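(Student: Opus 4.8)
The plan is to verify directly that the vector given in \eqref{deltaSolution} satisfies the linear system \eqref{deltaEq}, using the defining equations \eqref{firstOrder} for $\dot{\v}$ and the feasibility conditions \eqref{feasible} for the optimal solution $(\bar{\x},\bar{\y},\bar{\w},\bar{\s},\bar{\z})$. Since the coefficient matrix in \eqref{deltaEq} is exactly the reduced Jacobian (the $\z$ and $\w$ blocks having been merged using $\w=\z$, which is legitimate by Lemma \ref{eqwz}), I would substitute $(\delta\x,\delta\y,\delta\z,\delta\s) = (\dot{\x},\dot{\y},\dot{\z},\dot{\s}) - \nu_k(\x^0-\bar{\x},\y^0-\bar{\y},\z^0-\bar{\z},\s^0-\bar{\s})$ block by block and check each of the four rows.

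First I would handle the top three rows. From the first three lines of \eqref{firstOrder} one has $\H\dot{\x}+\A_E^{\T}\dot{\y}-\A_I^{\T}\dot{\w} = \r_C$, $\A_E\dot{\x}=\r_E$, $\A_I\dot{\x}-\dot{\s}=\r_I$; recalling the definitions \eqref{residuals} of $\r_C^k,\r_E^k,\r_I^k$ at the current iterate $\v^k$, these say $\H\dot{\x}+\A_E^{\T}\dot{\y}-\A_I^{\T}\dot{\z} = \H\x^k+\A_E^{\T}\y^k-\A_I^{\T}\w^k$, etc. Subtracting $\nu_k$ times the feasibility relations \eqref{fa}--\eqref{fc}, the right-hand sides collapse: for the first row one gets $\H(\x^k-\nu_k\x^0+\nu_k\bar{\x}) + \cdots$, and here one needs the key identity $\x^k - \nu_k(\x^0-\bar{\x}) = $ (something that the first three rows annihilate). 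Concretely, I expect that Proposition \ref{basic} is the engine: \eqref{a}--\eqref{c} give $\r_C^k = \nu_{k-1}\r_C^0$, and combined with the analogous update for the iterates one shows $\H\delta\x + \A_E^{\T}\delta\y - \A_I^{\T}\delta\z = \r_C^k - \nu_k\r_C^0 \cdot(\text{const})$ reduces to $\0$ once the bookkeeping of $\nu_k$ versus $\nu_{k-1}$ and the factor $(1-\sin(\alpha^{k-1}))$ is done correctly. The same cancellation, driven by \eqref{b} and \eqref{c}, kills rows two and three.

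The fourth (complementarity) row is the one requiring a genuine computation rather than pure cancellation, and I expect it to be the main obstacle. Here I would compute $\S\,\delta\z + \Z\,\delta\s = \S(\dot{\z}-\nu_k(\z^0-\bar{\z})) + \Z(\dot{\s}-\nu_k(\s^0-\bar{\s})) = (\S\dot{\z}+\Z\dot{\s}) - \nu_k\S(\z^0-\bar{\z}) - \nu_k\Z(\s^0-\bar{\s})$, and then invoke the last line of \eqref{firstOrder}, namely $\Z\dot{\s}+\S\dot{\z} = \Z\s$ (equivalently $\S\z$), to replace the first parenthesis by $\z\circ\s$. This yields exactly the right-hand side $\z\s - \nu_k\s(\z^0-\bar{\z}) - \nu_k\z(\s^0-\bar{\s})$ displayed in \eqref{deltaEq}, where I am using $\S\dot{\z}=\s\circ\dot{\z}$ and $\S(\z^0-\bar{\z}) = \s\circ(\z^0-\bar{\z})$ to pass between matrix and Hadamard notation. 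The only subtlety is confirming that the last block-row of \eqref{firstOrder} really reads $\Z\dot{\s}+\S\dot{\z}=\Z\s$ (it is written there as the product form $\Z\dot{\s}+\S\dot{\z}$ on the left and $\Z\s$ on the right), after which the identity is immediate. Finally I would note that existence of the solution also follows from nonsingularity of the reduced Jacobian away from optimality (Theorem \ref{nonsingular} together with the remark after it), so the displayed formula is in fact the unique solution; but the cleanest argument is the direct verification above, which needs no invertibility hypothesis.
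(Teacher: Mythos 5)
Your proposal is correct and follows essentially the same route as the paper: a direct block-by-block verification in which the complementarity row is handled exactly as in the paper via the last line of (\ref{firstOrder}), and the top three rows follow from $\r_C^k=\nu_k\r_C^0$ (and its analogues) in Proposition \ref{basic} combined with the feasibility relations (\ref{fa})--(\ref{fc}). Two cosmetic remarks: the quantity the first row annihilates is $\dot{\x}-\nu_k(\x^0-\bar{\x})$, not $\x^k-\nu_k(\x^0-\bar{\x})$, and the top three rows involve no delicate bookkeeping beyond writing $\r_C^0=\H(\x^0-\bar{\x})+\A_E^{\T}(\y^0-\bar{\y})-\A_I^{\T}(\z^0-\bar{\z})$, so they are no harder than the fourth.
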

\begin{proof}
Using the first row of (\ref{firstOrder}), (\ref{a}), (\ref{defineG}), 
and (\ref{fa}), we have
\[
\H \dot{\x} + \A_E^{\T} \dot{\y} -\A_I^{\T} \dot{\z}
=\r_C^k = \nu_k \r_C^0 = \nu_k (\H \x^0 + \A_E^{\T} \y^0 - \A_I^{\T} \z^0
- \H \bar{\x}-\A_E^{\T} \bar{\y}+\A_I^{\T} \bar{\z}).
\]
This gives
\begin{equation}
\H [\dot{\x} -\nu_k ( \x^0-\bar{\x})]
+\A_E^{\T} [ \dot{\y} -\nu_k (\y^0 -\bar{\y})]
-\A_I^{\T} [\dot{\z} -\nu_k ( \z^0-\bar{\z})]=\0,
\label{1stRow}
\end{equation}
which is the first row of (\ref{deltaEq}).
Using the second row of (\ref{firstOrder}), (\ref{b}), (\ref{defineG}), 
and (\ref{fb}), we have
\[
 \A_E  \dot{\x} =\r_E^k = \nu_k \r_E^0
= \nu_k (\A_E \x^0 - \b_E)= \nu_k (\A_E \x^0 - \A_E \bar{\x}).
\]
This gives
\begin{equation}
 \A_E  [\dot{\x} -\nu_k ( \x^0-\bar{\x})]=\0,
\label{2ndRow}
\end{equation}
which is the second row of (\ref{deltaEq}).
Using the third row of (\ref{firstOrder}), (\ref{c}), (\ref{defineG}), 
and (\ref{fc}), we have
\[
 \A_I \dot{\x} -\dot{\s}  =\r_I^k = \nu_k \r_I^0
= \nu_k (\A_I \x^0 - \s^0 - \b_I)= \nu_k [\A_I (\x^0-\bar{\x}) - (\s^0 - \bar{\s})].
\]
This gives
\begin{equation}
 \A_I [\dot{\x} -\nu_k ( \x^0-\bar{\x})]-[\dot{\s}-\nu_k (\s^0 - \bar{\s})]  =\0,
\label{3rdRow}
\end{equation}
which is the third row of (\ref{deltaEq}).
Finally using the last row of (\ref{firstOrder}), we have
\begin{eqnarray}
&&  \S[\dot{\z}-\nu_k (\z^0 - \bar{\z})]+
\Z[\dot{\s}-\nu_k (\s^0 - \bar{\s})] 
\nonumber \\
& = & 
( \S \dot{\z} +\Z \dot{\s})-\nu_k [ \Z(\s^0 - \bar{\s})+\S(\z^0 - \bar{\z})]
\nonumber \\
& = & \Z\s - \nu_k [ \Z(\s^0 - \bar{\s})] -\nu_k [ \S(\z^0 - \bar{\z})]
\label{4eqnarray}
\end{eqnarray}
which is the last row of (\ref{deltaEq}). This completes the proof.
\hfill \qed
\end{proof}

Let 
\begin{equation}
\left[ \begin{array}{c}
\r^1 \\ \r^2  \\ \r^3 
\end{array} \right]
=\left[ \begin{array}{c}
\Z\s  \\
- \nu_k [ \Z(\s^0 - \bar{\s})]  \\
- \nu_k [ \S(\z^0 - \bar{\z})]   
\end{array} \right],
\label{splitR}
\end{equation}
and let $(\delta \x^i, \delta \y^i, \delta \z^i,  \delta \s^i)$, for 
$i=1,2,3$, be the solution of
\begin{equation}
\left[ \begin{array}{cccc}
\H  & \A_E^{\T} & - \A_I^{\T} & \0 \\
\A_E & \0  & \0  & \0 \\
\A_I & \0  & \0  & -\I \\
\0  & \0  &  \S  & \Z
\end{array} \right]
\left[ \begin{array}{c}
\delta \x^i \\
\delta {\y}^i \\
\delta {\z}^i \\
\delta \s^i
\end{array} \right] 
=
\left[ \begin{array}{c}
0 \\  0  \\  0 \\  \r^i 
\end{array} \right].
\label{matrixSplit}
\end{equation}

We will use the following result in our analysis.

\begin{lemma}
The solutions of (\ref{deltaEq}) and (\ref{matrixSplit}) meet
the following relations.
\begin{subequations}
\begin{gather}
\delta \x = \delta \x^1+\delta \x^2+\delta \x^3 
=\dot{\x} - \nu_k (\x^0-\bar{\x}), \label{deltaX} \\
\delta \y = \delta \y^1+\delta \y^2+\delta \y^3 
=\dot{\y} - \nu_k (\y^0-\bar{\y}) \label{deltaL}, \\
\delta \z = \delta \z^1+\delta \z^2+\delta \z^3 
=\dot{\z} - \nu_k (\z^0-\bar{\z}) \label{deltaZ}, \\
\delta \s = \delta \s^1+\delta \s^2+\delta \s^3 
=\dot{\s} - \nu_k (\s^0-\bar{\s}). \label{deltaS}
\end{gather}
\label{deltaXYZ}
\end{subequations} 
Moreover, $(\D^{-1} \delta \z^i)^{\Tr} (\D \delta \s^i) \ge 0$ 
holds, for $i=1,2,3$.
\label{inEq1}
\end{lemma}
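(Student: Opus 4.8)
The plan is to verify the two assertions of Lemma~\ref{inEq1} separately. For the identities (\ref{deltaX})--(\ref{deltaS}), I would argue by uniqueness of solutions to the linear system appearing in (\ref{deltaEq}) and (\ref{matrixSplit}). That coefficient matrix is exactly the Jacobian-type matrix whose nonsingularity was established (in a neighborhood of the optimum, and under Assumption (A3$'$) elsewhere) in Theorem~\ref{nonsingular} and the remark following it; in particular it is invertible at the current iterate. Since (\ref{matrixSplit}) has right-hand sides $\r^1,\r^2,\r^3$ that sum to the right-hand side of (\ref{deltaEq}) by construction (\ref{splitR}), linearity gives that $(\delta\x^1+\delta\x^2+\delta\x^3,\ \delta\y^1+\delta\y^2+\delta\y^3,\ \delta\z^1+\delta\z^2+\delta\z^3,\ \delta\s^1+\delta\s^2+\delta\s^3)$ solves (\ref{deltaEq}); by uniqueness it equals the solution exhibited in Lemma~\ref{dettaEqLemma}, namely $(\dot\x-\nu_k(\x^0-\bar\x),\ \dot\y-\nu_k(\y^0-\bar\y),\ \dot\z-\nu_k(\z^0-\bar\z),\ \dot\s-\nu_k(\s^0-\bar\s))$. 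This is the content of (\ref{deltaXYZ}).

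For the inner-product inequality, fix $i\in\{1,2,3\}$ and work directly with (\ref{matrixSplit}). From its second row $\A_E\,\delta\x^i=\0$, and from its third row $\A_I\,\delta\x^i=\delta\s^i$. Pre-multiplying the first row by $(\delta\x^i)^{\Tr}$ gives
\begin{equation*}
(\delta\x^i)^{\Tr}\H\,\delta\x^i + (\delta\x^i)^{\Tr}\A_E^{\Tr}\delta\y^i - (\delta\x^i)^{\Tr}\A_I^{\Tr}\delta\z^i = 0,
\end{equation*}
and using $\A_E\,\delta\x^i=\0$ and $\A_I\,\delta\x^i=\delta\s^i$ this collapses to $(\delta\x^i)^{\Tr}\H\,\delta\x^i = (\delta\s^i)^{\Tr}\delta\z^i$. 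Since $\H=\nabla_\x^2 f(\x)$ is positive semidefinite (indeed positive definite) for a convex $f$, we get $(\delta\s^i)^{\Tr}\delta\z^i\ge 0$. It remains to observe that $(\D^{-1}\delta\z^i)^{\Tr}(\D\,\delta\s^i) = (\delta\z^i)^{\Tr}\D^{-1}\D\,\delta\s^i = (\delta\z^i)^{\Tr}\delta\s^i$, because $\D=\S^{1/2}\Z^{-1/2}$ is a positive diagonal matrix and $\D^{-1}\D=\I$; hence $(\D^{-1}\delta\z^i)^{\Tr}(\D\,\delta\s^i)\ge 0$, which is the claimed bound. The same computation mirrors the one already used in Lemma~\ref{cross}.

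The routine parts here are genuinely routine — the splitting identity is linearity plus uniqueness, and the sign of the cross term is the standard ``primal Hessian is PSD'' trick. The one point that needs a word of care is the invocation of nonsingularity of the coefficient matrix at a generic iterate rather than at $\bar\v$: Theorem~\ref{nonsingular} is stated at the optimum, so I would lean on the remark citing \cite{yiy21} together with Assumption (A3$'$) (which keeps $\Z^k,\S^k$ bounded away from zero, hence the bottom block $[\,\S\ \ \Z\,]$ of full row rank) to justify that (\ref{matrixSplit}) and (\ref{deltaEq}) are uniquely solvable; this is the only step where I expect to have to be slightly careful rather than purely mechanical.
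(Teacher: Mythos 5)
Your proposal is correct and follows essentially the same route as the paper: the splitting identity comes from linearity of the common coefficient matrix (the paper states this "follows immediately," while you add the — appropriate — remark that uniqueness/nonsingularity is what forces the sum to coincide with the solution of Lemma~\ref{dettaEqLemma}), and the sign of the cross term is obtained by the identical manipulation of the first three block rows of (\ref{matrixSplit}) to reduce $(\delta\z^i)^{\Tr}\delta\s^i$ to $(\delta\x^i)^{\Tr}\H\,\delta\x^i \ge 0$. No gaps.
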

\begin{proof}
The first claim follows immediately from the linear systems of equations
(\ref{deltaEq}) and (\ref{matrixSplit}). The second claim is 
equivalent to $(\delta \z^i)^{\Tr} (\delta \s^i) \ge 0$.
Pre-multiplying $(\delta \z^i)^{\Tr}$ in the third row of 
(\ref{matrixSplit}) yields
\begin{equation}
(\delta \z^i)^{\Tr} \A_I \delta \x ^i-(\delta \z^i)^{\Tr}\delta \s^i=0.
\label{tmp1}
\end{equation}
Rewriting the first row of (\ref{matrixSplit}) yields
\begin{equation}
\A_I^{\Tr} \delta \z^i=\H \delta \x ^i +\A_E^{\T}\delta \y^i .
\label{tmp2}
\end{equation}
Substituting (\ref{tmp2}) into (\ref{tmp1}) and using the second
row of (\ref{matrixSplit}) yields
\begin{eqnarray}
& & (\delta \z^i)^{\Tr} \A_I \delta \x^i-(\delta \z^i)^{\Tr}\delta \s^i
\nonumber \\
& = & \left( \delta \x^{i^{\Tr}} \H^{\Tr} + \delta \y^{i^{\Tr}}\A_E \right)
\delta \x^i-(\delta \z^i)^{\Tr}\delta \s^i
\nonumber \\
& = & \delta \x^{i^{\Tr}} \H^{\Tr} \delta \x^i - (\delta \z^i)^{\Tr}\delta \s^i=0.
\label{tmp3}
\end{eqnarray}
Since $\H$ is a convex Hessian matrix, therefore, $\H$ is symmetric and positive 
definite, the last equality indicates that
$(\delta \z^i)^{\Tr}\delta \s^i =\delta \x^{i^{\Tr}} \H \delta \x^i \ge 0$
for $i=1,2,3$.
\hfill \qed
\end{proof}

Now, we are ready to provide several estimations that are important to
the convergence analysis.

\begin{lemma}
Let $(\x^0, \y^0,\w^0,\s^0, \z^0)$ be the initial point of Algorithm \ref{mainAlgo1},
and $(\bar{\x},\bar{\y},\bar{\z},\bar{\w},\bar{\s})$ be an optimal
solution of (\ref{CP}). Then
\begin{equation}
\| \D  \dot{\z} \|, \| \D^{-1}  \dot{\s} \|
\le \sqrt{n\mu} + \| \D \delta \z^2  \| + \| \D^{-1} \delta \s^3 \|.
\label{mainIneq}
\end{equation}
\label{wright}
\end{lemma}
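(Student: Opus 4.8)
The plan is to express the vectors $\dot{\z}$ and $\dot{\s}$ through the splitting provided in Lemma~\ref{deltaXYZ} and then bound each piece in the scaled norm $\|\D\,\cdot\|$ or $\|\D^{-1}\cdot\|$. First I would write, using (\ref{deltaZ}) and (\ref{deltaS}),
\begin{equation*}
\dot{\z} = \delta\z^1 + \delta\z^2 + \delta\z^3 + \nu_k(\z^0-\bar{\z}),
\qquad
\dot{\s} = \delta\s^1 + \delta\s^2 + \delta\s^3 + \nu_k(\s^0-\bar{\s}).
\end{equation*}
Rather than keeping these four-term splittings literally, I expect the cleaner route is to work with the combined directions $\hat{\z} := \dot{\z} - \nu_k(\z^0-\bar{\z}) = \delta\z$ and $\hat{\s} := \dot{\s} - \nu_k(\s^0-\bar{\s}) = \delta\s$, noting that their pieces satisfy the cross-orthogonality $(\D^{-1}\delta\z^i)^{\Tr}(\D\,\delta\s^i)\ge 0$ from Lemma~\ref{inEq1}. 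The key algebraic identity to exploit is that the last block row of (\ref{deltaEq}) reads $\S\,\delta\z + \Z\,\delta\s = \Z\s - \nu_k\Z(\s^0-\bar{\s}) - \nu_k\S(\z^0-\bar{\z}) = \r^1+\r^2+\r^3$, which in $\D$-scaled form becomes $\D^{-1}\delta\z + \D\,\delta\s = (\S\Z)^{-1/2}(\r^1+\r^2+\r^3)$, because $\S^{1/2}\Z^{-1/2}\cdot$ applied appropriately turns $\S\,\delta\z$ into $\D^{-1}(\S\Z)^{1/2}\delta\z$ — I would need to be careful with the exact scaling but this is the standard Kojima-type manipulation.

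The heart of the argument is the following: for two vectors $\u,\v$ with $\u^{\Tr}\v \ge 0$ one has $\|\u\|,\|\v\| \le \|\u+\v\|$. Apply this with $\u = \D^{-1}\delta\z^i$ and $\v = \D\,\delta\s^i$ (whose inner product is nonnegative by Lemma~\ref{inEq1}), so that
\begin{equation*}
\|\D^{-1}\delta\z^i\|,\ \|\D\,\delta\s^i\| \ \le\ \|\D^{-1}\delta\z^i + \D\,\delta\s^i\| \ =\ \|(\S\Z)^{-1/2}\r^i\|,\qquad i=1,2,3.
\end{equation*}
For $i=1$, $\r^1 = \Z\s$, so $(\S\Z)^{-1/2}\r^1 = (\S\Z)^{1/2}\e$, whose norm is $\sqrt{\s^{\Tr}\z} = \sqrt{p\mu}$ — here I should double-check whether the statement's $\sqrt{n\mu}$ versus $\sqrt{p\mu}$ reflects a convention in the paper or needs $p \le n$ or a constant absorbed; I would follow whatever normalization the earlier lemmas (e.g.\ Lemma~\ref{cond1}) use and state it as $\sqrt{n\mu}$ accordingly. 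For $i=2$ and $i=3$ I keep the terms $\|\D^{-1}\delta\z^2 + \D\,\delta\s^2\|$ etc., but the statement only wants $\|\D\,\delta\z^2\|$ and $\|\D^{-1}\delta\s^3\|$ to appear — so I would instead bound $\|\D\,\dot{\z}\|$ by triangle inequality after scaling: $\|\D\,\dot{\z}\| \le \|\D\,\delta\z^1\| + \|\D\,\delta\z^2\| + \|\D\,\delta\z^3\| + \nu_k\|\D(\z^0-\bar{\z})\|$, and then control $\|\D\,\delta\z^1\|$ and $\|\D\,\delta\z^3\|$ and the last term by $\sqrt{n\mu}$ using Lemma~\ref{cond1} together with the $i=1,3$ cross-orthogonality bounds, leaving only $\|\D\,\delta\z^2\|$ as an irreducible term. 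The treatment of $\|\D^{-1}\dot{\s}\|$ is symmetric with the roles of $\s$ and $\z$ (hence $\delta\s^3$) swapped.

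The main obstacle I anticipate is the bookkeeping of which scaled norm ($\D$ versus $\D^{-1}$) each of the six pieces $\delta\z^i,\delta\s^i$ naturally lives in, and matching the pairing so that Lemma~\ref{inEq1}'s inequality $(\D^{-1}\delta\z^i)^{\Tr}(\D\,\delta\s^i)\ge0$ is applied with the correct scaling on each side; getting $\r^1 = \Z\s$ to produce exactly $\sqrt{n\mu}$ (modulo the $p$ versus $n$ issue) and showing the "easy" pieces $\delta\z^1,\delta\z^3,\delta\s^1,\delta\s^2$ and the $\nu_k(\cdot^0-\bar{\cdot})$ remainders are all absorbed into $\sqrt{n\mu}$ via Lemma~\ref{cond1} (which bounds $\nu_k D_{ii}$ and $\nu_k D_{ii}^{-1}$ by $C_1\sqrt{n\mu}$, so that e.g.\ $\nu_k\|\D(\z^0-\bar{\z})\| \le C_1\sqrt{n\mu}\,\|\z^0-\bar{\z}\|_\infty\cdot(\text{stuff})$, finite by Assumption (A5)). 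Everything else is the standard norm inequality $\u^{\Tr}\v\ge0 \Rightarrow \max(\|\u\|,\|\v\|)\le\|\u+\v\|$ plus triangle inequalities, so I expect the proof to be short once the scaling conventions are pinned down.
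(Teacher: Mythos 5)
There is a genuine gap: you never find the cancellation that makes the right-hand side of (\ref{mainIneq}) come out exactly as stated. Your plan for $\| \D \dot{\z} \|$ is the four-term triangle inequality $\| \D \dot{\z} \| \le \| \D \delta\z^1 \| + \| \D \delta\z^2 \| + \| \D \delta\z^3 \| + \nu_k \| \D (\z^0-\bar{\z}) \|$, followed by absorbing $\| \D \delta\z^3 \|$ and $\nu_k \| \D (\z^0-\bar{\z}) \|$ into $\sqrt{n\mu}$ via Lemma \ref{cond1} and Assumption (A5). That cannot yield (\ref{mainIneq}): those absorptions cost the constants $C_1$ and $M$, so at best you obtain $\| \D \dot{\z} \| \le C\sqrt{n\mu} + \| \D \delta\z^2 \|$ for some problem-dependent $C$, not the coefficient-one bound in the statement, and the term $\| \D^{-1} \delta\s^3 \|$ — which must survive so that Lemma \ref{main1} can estimate it later — disappears from your bound. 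The paper's proof instead takes the last block row of (\ref{matrixSplit}) for $i=3$, namely $\Z \delta\s^3 + \S \delta\z^3 = -\nu_k \S (\z^0-\bar{\z})$, and rewrites it as $\delta\z^3 = -\nu_k(\z^0-\bar{\z}) - \D^{-2}\delta\s^3$. Substituting this into $\dot{\z} = \delta\z^1 + \delta\z^2 + \delta\z^3 + \nu_k(\z^0-\bar{\z})$ cancels the $\nu_k(\z^0-\bar{\z})$ term exactly and gives the identity $\D\dot{\z} = \D\delta\z^1 + \D\delta\z^2 - \D^{-1}\delta\s^3$; the triangle inequality plus the $i=1$ estimate $\| \D \delta\z^1 \| \le \sqrt{n\mu}$ then produce (\ref{mainIneq}) with no constants. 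The symmetric substitution $\delta\s^2 = -\nu_k(\s^0-\bar{\s}) - \D^{2}\delta\z^2$ handles $\| \D^{-1}\dot{\s} \|$ and explains why it is $\delta\z^2$ and $\delta\s^3$, specifically, that appear on the right-hand side.

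Two smaller points. The natural scaled pairing comes from multiplying $\S\delta\z^i + \Z\delta\s^i = \r^i$ by $(\S\Z)^{-1/2}$, which gives $\D\delta\z^i + \D^{-1}\delta\s^i = (\S\Z)^{-1/2}\r^i$; since $\D$ is diagonal the inner-product condition of Lemma \ref{inEq1} is just $(\delta\z^i)^{\Tr}\delta\s^i \ge 0$ in any scaling, but the sum you compare against must be $\D\delta\z^i + \D^{-1}\delta\s^i$, not the reversed scaling you wrote. Your remark about $\sqrt{p\mu}$ versus $\sqrt{n\mu}$ is correct: with $\mu = \s^{\Tr}\z/p$ the $i=1$ term is $\sqrt{\s^{\Tr}\z} = \sqrt{p\mu}$, and the paper's $\sqrt{n\mu}$ is an inconsistency of its own notation rather than a flaw in your reasoning.
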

\begin{proof}
Since $(\D^{-1} \delta \s^i)^{\Tr} (\D \delta \z^i) \ge 0$, 
for $i=1,2,3$, it follows that
\begin{equation}
\| \D^{-1} \delta \s^i \|^2, \| \D \delta \z^i \|^2 
\le \| \D^{-1} \delta \s^i \|^2 + \| \D \delta \z^i \|^2
\le \| \D^{-1} \delta \s^i + \D \delta \z^i \|^2.
\label{general}
\end{equation}
Applying $\S \delta \z^i + \Z \delta \s^i= \r^i$ to (\ref{general}) 
for $i=1,2,3$ respectively, we obtain the following relations
\begin{subequations}
\begin{gather}
\| \D^{-1} \delta \s^1 \|, \| \D \delta \z^1 \| 
\le  \| \D^{-1} \delta \s^1 + \D \delta \z^1 \|
=\| (\S\z)^{\frac{1}{2}} \| = \sqrt{\s^{\Tr}\z} = \sqrt{n\mu}, \label{deltaX1S1} \\
\| \D^{-1} \delta \s^2 \|, \| \D \delta \z^2 \| 
\le  \| \D^{-1} \delta \s^2 + \D \delta \z^2 \|
=\nu_k \| \D^{-1} (\s^0-\bar{\s}) \| ,  \label{deltaX2S2} \\
\| \D^{-1} \delta \s^3 \|, \| \D \delta \z^3 \| 
\le  \| \D^{-1} \delta \s^3 + \D \delta \z^3 \|
=\nu_k \| \D (\z^0-\bar{\z}) \|. \label{deltaX3S3} 
\end{gather}
\label{deltaXS} 
\end{subequations} 
Considering the last row of (\ref{matrixSplit}) with $i=2$, we have
\[
\Z \delta \s^2 + \S \delta \z^2 = \r^2 = -\nu_k \Z(\s^0 - \bar{\s}),
\]
which is equivalent to 
\begin{equation}
\delta \s^2 = -\nu_k (\s^0 - \bar{\s}) - \D^2 \delta \z^2.
\label{deltax2}
\end{equation}
Thus, from (\ref{deltaS}), (\ref{deltax2}), and (\ref{deltaXS}), we have
\begin{eqnarray}
\| \D^{-1} \dot{\s} \| & = & 
\| \D^{-1} [\delta \s^1 +\delta \s^2 +\delta \s^3 + \nu_k (\s^0 - \bar{\s})] \|
\nonumber \\
& = & \| \D^{-1} \delta \s^1 - \D \delta \z^2 + \D^{-1} \delta \s^3 \|
\nonumber \\
& \le & \| \D^{-1} \delta \s^1 \| + \| \D \delta \z^2  \| + \| \D^{-1} \delta \s^3 \|.
\label{formuA}
\end{eqnarray}
Considering the last row of (\ref{matrixSplit}) with $i=3$, we have
\[
\Z\delta \s^3 + \S\delta \z^3 = \r^3 = -\nu_k \S(\z^0 - \bar{\z}),
\]
which is equivalent to 
\begin{equation}
\delta \z^3 = -\nu_k (\z^0 - \bar{\z}) - \D^{-2} \delta \s^3.
\label{deltas3}
\end{equation}
Thus, from (\ref{deltaZ}), (\ref{deltas3}), and (\ref{deltaXS}), we have
\begin{eqnarray}
\| \D  \dot{\z} \| & = & 
\| \D  [\delta \z^1 +\delta \z^2 +\delta \z^3 + \nu_k (\z^0 - \bar{\z})] \|
\nonumber \\
& = & \| \D  \delta \z^1 + \D \delta \z^2 - \D^{-1} \delta \s^3 \|
\nonumber \\
& \le & \| \D  \delta \z^1 \| + \| \D \delta \z^2  \| + \| \D^{-1} \delta \s^3 \|.\label{formuB}
\end{eqnarray}
In view of (\ref{deltaX1S1}), adjoining (\ref{formuA}) and
(\ref{formuB}) gives (\ref{mainIneq}).
\hfill \qed
\end{proof}

The next lemma provides an estimation which will be used
to establish the polynomiality for the proposed algorithm.

\begin{lemma}
Let $(\dot{\x}, \dot{\y}, \dot{\w}, \dot{\s}, \dot{\z})$ be 
defined in (\ref{firstOrder}). Then, there is a constant $C_2$
independent of $n$ and $m$ such that in every iteration of 
Algorithm \ref{mainAlgo1}, the following inequality holds.
\begin{equation}
\| \D  \dot{\z} \|, \| \D^{-1}  \dot{\s} \|
\le C_2 \sqrt{n \mu_k}.
\label{converg2}
\end{equation} 
\label{main1}
\end{lemma}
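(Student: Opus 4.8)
The plan is to chain together Lemma \ref{wright}, Lemma \ref{cond1}, and Assumption (A5). Lemma \ref{wright} already reduces the task to bounding the two terms $\|\D\delta\z^2\|$ and $\|\D^{-1}\delta\s^3\|$, since it gives
\[
\|\D\dot{\z}\|,\ \|\D^{-1}\dot{\s}\| \le \sqrt{n\mu_k} + \|\D\delta\z^2\| + \|\D^{-1}\delta\s^3\|.
\]
So the whole proof comes down to showing each of $\|\D\delta\z^2\|$ and $\|\D^{-1}\delta\s^3\|$ is $O(\sqrt{n\mu_k})$ with a constant independent of $n$ and $m$.

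First I would invoke the bounds (\ref{deltaX2S2}) and (\ref{deltaX3S3}) from the proof of Lemma \ref{wright}, namely $\|\D\delta\z^2\| \le \nu_k\|\D^{-1}(\s^0-\bar{\s})\|$ and $\|\D^{-1}\delta\s^3\| \le \nu_k\|\D(\z^0-\bar{\z})\|$. Next I would pull the diagonal matrix out using the elementary estimate $\|\D^{-1}\u\| \le \max_i (D_{ii})^{-1}\,\|\u\|$ (and similarly for $\D$), so that
\[
\nu_k\|\D^{-1}(\s^0-\bar{\s})\| \le \Big(\max_i (D^k_{ii})^{-1}\nu_k\Big)\,\|\s^0-\bar{\s}\|,\qquad
\nu_k\|\D(\z^0-\bar{\z})\| \le \Big(\max_i D^k_{ii}\nu_k\Big)\,\|\z^0-\bar{\z}\|.
\]
Then Lemma \ref{cond1} bounds the parenthesized factors: $(D^k_{ii})^{-1}\nu_k \le C_1\sqrt{n\mu_k}$ and $D^k_{ii}\nu_k \le C_1\sqrt{n\mu_k}$ for every $i$. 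Finally Assumption (A5) gives $\|\s^0-\bar{\s}\| \le \|(\s^0-\bar{\s},\z^0-\bar{\z})\| < M$ and likewise $\|\z^0-\bar{\z}\| < M$.

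Putting these together, $\|\D\delta\z^2\| \le C_1 M\sqrt{n\mu_k}$ and $\|\D^{-1}\delta\s^3\| \le C_1 M\sqrt{n\mu_k}$, whence
\[
\|\D\dot{\z}\|,\ \|\D^{-1}\dot{\s}\| \le \sqrt{n\mu_k} + 2C_1 M\sqrt{n\mu_k} = (1+2C_1 M)\sqrt{n\mu_k},
\]
so the claim holds with $C_2 = 1 + 2C_1 M$, which depends only on $C_1$ and $M$ and hence is independent of $n$ and $m$. There is no real obstacle here — the lemma is essentially a bookkeeping corollary of the three earlier results — but the one point to handle with a little care is the passage from the vector norm $\|\D^{-1}(\s^0-\bar{\s})\|$ to $\max_i(D_{ii})^{-1}\|\s^0-\bar{\s}\|$ and then applying the \emph{per-coordinate} bound of Lemma \ref{cond1} uniformly in $i$; everything else is immediate.
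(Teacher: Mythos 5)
Your proof is correct and follows essentially the route the paper intends: the paper's own proof only records the nonnegativity of $(\delta\z^i)^{\T}(\delta\s^i)$ for $i=2,3$ (Lemma \ref{inEq1}) and defers the rest to \cite{yang18}, and your argument is precisely the standard chain --- Lemma \ref{wright}, the bounds (\ref{deltaX2S2})--(\ref{deltaX3S3}) (which rest on that nonnegativity), the diagonal-matrix norm estimate, Lemma \ref{cond1}, and Assumption (A5) --- yielding $C_2=1+2C_1M$.
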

\begin{proof}
In view of Lemma \ref{inEq1}, we have 
\[ (\D\delta \z^2)^{\T} (\D^{-1} \delta \s^2)
=(\delta \z^2)^{\T} (\delta \s^2) \ge 0,
~~~~~~ (\D\delta \z^3)^{\T} (\D^{-1} \delta \s^3)
=(\delta \z^3)^{\T} (\delta \s^3) \ge 0. \]
Using a similar idea of \cite{yang18}, we can derive (\ref{converg2}).
\hfill \qed
\end{proof}

Lemma \ref{main1} can be used to derive several useful inequalities.

\begin{lemma}
Let $(\dot{\x}, \dot{\y}, \dot{\w}, \dot{\s}, \dot{\z})$ and
$(\ddot{\x}, \ddot{\y}, \ddot{\w}, \ddot{\s}, \ddot{\z})$ be 
defined in(\ref{firstOrder}) and (\ref{secondOrder}). There is a constant
$C_3>0$ independent of $n$ and $m$ such that the following 
inequalities hold.
\begin{subequations}
\begin{gather}
\| \D^{-1} \ddot{\s} \|, \| \D  \ddot{\z} \| \le C_3 n  \mu_k^{0.5}, 
\\
\| \D^{-1} \p_{\s} \|, \| \D  \p_{\z} \| \le \sqrt{\frac{n}{\theta}} \mu_k^{0.5}, 
\\
\| \D^{-1} \q_{\s} \|, \| \D  \q_{\z} \| \le 
\frac{2C_2^2 }{\sqrt{\theta}} n \mu_k^{0.5}.
\label{exten}
\end{gather}
\end{subequations}
\label{extension}
\end{lemma}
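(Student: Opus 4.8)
The plan is to establish the three pairs of bounds in the order $\p$, then $\q$, then $\ddot{\v}$, since the last pair follows from the first two via $\ddot{\v}=\p\sigma_k+\q$ with $\sigma_k\in[0,1]$. The recurring tool is the scaling device already used in the proof of Lemma~\ref{wright}: for diagonal $\D=\S^{\frac12}\Z^{-\frac12}$ and any vectors $\u,\v$ with $\u^\T\v\ge0$, one has $\|\D^{-1}\u\|^2,\|\D\v\|^2\le\|\D^{-1}\u\|^2+\|\D\v\|^2\le\|\D^{-1}\u+\D\v\|^2$, together with the identity $\D^{-1}\u+\D\v=(\S\Z)^{-\frac12}(\Z\u+\S\v)$ (valid since $\D$ is diagonal, so $\D^{-1}=(\S\Z)^{-\frac12}\Z$ and $\D=(\S\Z)^{-\frac12}\S$).

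First I would bound $\p_\s$ and $\p_\z$. Lemma~\ref{cross}, inequality~(\ref{ineqB}), gives $\p_\s^\T\p_\z\ge0$, hence $(\D^{-1}\p_\s)^\T(\D\p_\z)=\p_\s^\T\p_\z\ge0$ and therefore $\|\D^{-1}\p_\s\|^2,\|\D\p_\z\|^2\le\|(\S\Z)^{-\frac12}(\Z\p_\s+\S\p_\z)\|^2$. The last row of~(\ref{secondOrderP}) reads $\Z\p_\s+\S\p_\z=\mu_k\e$, so the right side equals $\mu_k^2\sum_i(s_i^kz_i^k)^{-1}$, which by Condition~(C4) (i.e. $s_i^kz_i^k\ge\theta\mu_k$) is at most $n\mu_k/\theta$. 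This yields $\|\D^{-1}\p_\s\|,\|\D\p_\z\|\le\sqrt{n/\theta}\,\mu_k^{0.5}$, the second inequality. Next I would bound $\q_\s$ and $\q_\z$ the same way: Lemma~\ref{cross}, inequality~(\ref{ineqC}), gives $\q_\s^\T\q_\z\ge0$, so $\|\D^{-1}\q_\s\|^2,\|\D\q_\z\|^2\le\|(\S\Z)^{-\frac12}(\Z\q_\s+\S\q_\z)\|^2=4\|(\S\Z)^{-\frac12}\dot{\Z}\dot{\s}\|^2$, using that the last row of~(\ref{secondOrderQ}) gives $\Z\q_\s+\S\q_\z=-2\dot{\Z}\dot{\s}$.

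To control $\|(\S\Z)^{-\frac12}\dot{\Z}\dot{\s}\|$, note that its $i$-th entry is $\dot{s}_i^k\dot{z}_i^k/\sqrt{s_i^kz_i^k}=(\D\dot{\z})_i(\D^{-1}\dot{\s})_i/\sqrt{s_i^kz_i^k}$; applying Condition~(C4) and then the crude estimate $\sum_i a_i^2b_i^2\le\|a\|^2\|b\|^2$ gives $\|(\S\Z)^{-\frac12}\dot{\Z}\dot{\s}\|^2\le(\theta\mu_k)^{-1}\|\D\dot{\z}\|^2\|\D^{-1}\dot{\s}\|^2$. By Lemma~\ref{main1} this is at most $(\theta\mu_k)^{-1}(C_2\sqrt{n\mu_k})^4=C_2^4n^2\mu_k/\theta$, whence $\|\D^{-1}\q_\s\|^2,\|\D\q_\z\|^2\le 4C_2^4n^2\mu_k/\theta$, i.e. $\|\D^{-1}\q_\s\|,\|\D\q_\z\|\le(2C_2^2/\sqrt\theta)\,n\mu_k^{0.5}$, the third inequality.

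Finally, from $\ddot{\s}=\sigma_k\p_\s+\q_\s$ and $\ddot{\z}=\sigma_k\p_\z+\q_\z$ with $0\le\sigma_k\le1$, the triangle inequality and $\sqrt{n}\le n$ give $\|\D^{-1}\ddot{\s}\|\le\|\D^{-1}\p_\s\|+\|\D^{-1}\q_\s\|\le(\sqrt{n/\theta}+2C_2^2n/\sqrt\theta)\mu_k^{0.5}\le\big((1+2C_2^2)/\sqrt\theta\big)n\mu_k^{0.5}$, and likewise for $\|\D\ddot{\z}\|$; taking $C_3=(1+2C_2^2)/\sqrt\theta$, which is independent of $n$ and $m$ since $\theta$ is a fixed parameter and $C_2$ has that property, proves the first inequality. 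The only step requiring real care is the $\q$ estimate, where Lemma~\ref{main1} must be chained with Condition~(C4) and the componentwise product bound; the rest is a direct application of the scaling identity and the cross-term nonnegativities from Lemma~\ref{cross}.
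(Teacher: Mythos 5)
Your proposal is correct and follows exactly the route the paper indicates for its (omitted) proof: the last rows of (\ref{secondOrderP}) and (\ref{secondOrderQ}) combined with the nonnegativity of the cross terms from Lemma \ref{cross}, the neighborhood condition $s_i^kz_i^k\ge\theta\mu_k$, Lemma \ref{main1}, and finally $\ddot{\v}=\p\sigma_k+\q$ with the triangle inequality. The details you supply (the identity $\D^{-1}\u+\D\v=(\S\Z)^{-\frac12}(\Z\u+\S\v)$, the componentwise bound on $\dot{\Z}\dot{\s}$, and the explicit $C_3=(1+2C_2^2)/\sqrt{\theta}$) are all sound and reproduce the stated constants.
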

\begin{proof}
In view of the last row of (\ref{secondOrderQ}), 
using the facts that $\q_{\s}^{\T} \q_{\z} \ge 0$ (\ref{ineqC}), 
$s_i^k z_i^k > \theta \mu_k$, and Lemma
\ref{main1}, and a similar idea of \cite{yang18}, we can prove (\ref{exten}).
\hfill \qed
\end{proof}

The following inequalities follow directly from the results of 
Lemmas \ref{main1} and \ref{extension}.

\begin{lemma}
Let $(\dot{\x}, \dot{\y}, \dot{\w}, \dot{\s}, \dot{\z})$ and
$(\ddot{\x}, \ddot{\y}, \ddot{\w}, \ddot{\s}, \ddot{\z})$ be 
defined in(\ref{firstOrder}) and (\ref{secondOrder}).
The following inequalities hold.
\begin{equation}
\frac{| \dot{\s}^{\T} \dot{\z} |}{n} \le C_2^2 \mu_k, \hspace{0.1in}
\frac{| \ddot{\s}^{\T} \dot{\z}|}{n} \le C_2C_3 \sqrt{n} \mu_k, \hspace{0.1in}
\frac{| \dot{\s}^{\T} \ddot{\z}|}{n} \le C_2C_3 \sqrt{n} \mu_k. 
\label{rela1}
\end{equation}
Moreover,
\begin{equation}
| \dot{s}_i \dot{z}_i | \le C_2^2 n \mu_k, \hspace{0.1in}
| \ddot{s}_i \dot{z}_i| \le C_2C_3 n^{\frac{3}{2}} \mu_k, \hspace{0.1in}
| \dot{s}_i \ddot{z}_i| \le C_2C_3 n^{\frac{3}{2}} \mu_k, \hspace{0.1in}
| \ddot{s}_i \ddot{z}_i| \le C_3^2 n^2 \mu_k. 
\label{rela2}
\end{equation}
\label{inequalities}
\label{lemma5}
\end{lemma}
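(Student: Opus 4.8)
The plan is to observe that this lemma is a direct corollary of Lemmas~\ref{main1} and~\ref{extension}: every product appearing in (\ref{rela1}) and (\ref{rela2}) can be rewritten, using the positive diagonal matrix $\D=\S^{\frac12}\Z^{-\frac12}$, as an inner product (resp.\ a coordinatewise product) of two of the $\D$-scaled directions $\D^{-1}\dot{\s}$, $\D\dot{\z}$, $\D^{-1}\ddot{\s}$, $\D\ddot{\z}$, after which Cauchy--Schwarz together with the norm bounds already established finishes the job.

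First I would record the elementary fact that, since $\D$ is diagonal and invertible, for any $\u,\v\in\R^p$ we have $\u^{\Tr}\v=(\D^{-1}\u)^{\Tr}(\D\v)$ and, componentwise, $u_iv_i=(\D^{-1}\u)_i(\D\v)_i$, so that $|u_iv_i|\le\|\D^{-1}\u\|_\infty\,\|\D\v\|_\infty\le\|\D^{-1}\u\|\,\|\D\v\|$. Applying the first identity with $(\u,\v)=(\dot{\s},\dot{\z})$ and Cauchy--Schwarz gives $|\dot{\s}^{\Tr}\dot{\z}|\le\|\D^{-1}\dot{\s}\|\,\|\D\dot{\z}\|\le C_2^2 n\mu_k$ by Lemma~\ref{main1}; dividing by $n$ yields the first bound in (\ref{rela1}). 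For the mixed terms I would pair the Lemma~\ref{main1} bound $\|\D\dot{\z}\|\le C_2\sqrt{n\mu_k}$ (resp.\ $\|\D^{-1}\dot{\s}\|\le C_2\sqrt{n\mu_k}$) with the Lemma~\ref{extension} bound $\|\D^{-1}\ddot{\s}\|,\ \|\D\ddot{\z}\|\le C_3 n\mu_k^{1/2}$, obtaining $|\ddot{\s}^{\Tr}\dot{\z}|\le (C_3 n\mu_k^{1/2})(C_2\sqrt{n\mu_k})=C_2C_3 n^{3/2}\mu_k$ and likewise $|\dot{\s}^{\Tr}\ddot{\z}|\le C_2C_3 n^{3/2}\mu_k$; dividing by $n$ gives the remaining two bounds in (\ref{rela1}). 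The coordinatewise estimates (\ref{rela2}) follow from the same scalings together with $|u_iv_i|\le\|\D^{-1}\u\|\,\|\D\v\|$: taking $(\u,\v)=(\dot{\s},\dot{\z})$ gives $|\dot{s}_i\dot{z}_i|\le C_2^2 n\mu_k$; taking $(\ddot{\s},\dot{\z})$ and $(\dot{\s},\ddot{\z})$ gives $|\ddot{s}_i\dot{z}_i|,\ |\dot{s}_i\ddot{z}_i|\le (C_3 n\mu_k^{1/2})(C_2\sqrt{n\mu_k})=C_2C_3 n^{3/2}\mu_k$; and taking $(\ddot{\s},\ddot{\z})$ gives $|\ddot{s}_i\ddot{z}_i|\le C_3^2 n^2\mu_k$, in each case invoking the norm bounds of Lemmas~\ref{main1} and~\ref{extension}.

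I do not expect any real obstacle: the statement is pure bookkeeping on top of the two preceding lemmas. The only points needing care are (i) keeping the two halves of the $\D$-splitting consistently matched so that the cancellation $\D^{-1}\D=\I$ reproduces exactly $\dot{s}_i\dot{z}_i$, $\ddot{s}_i\dot{z}_i$, and so on, rather than some mismatched pair; and (ii) tracking the powers of $n$ — a first-order direction contributes a factor $\sqrt{n}$ and a second-order direction a factor $n$ — so that each right-hand side in (\ref{rela1})–(\ref{rela2}) carries the advertised power of $n$.
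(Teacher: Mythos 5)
Your proposal is correct and is exactly the argument the paper intends: the paper gives no written proof, stating only that the lemma ``follows directly from Lemmas \ref{main1} and \ref{extension},'' and your $\D$-scaling plus Cauchy--Schwarz (and the componentwise bound $|u_iv_i|\le\|\D^{-1}\u\|\,\|\D\v\|$) is the standard way to make that precise, with the powers of $n$ tracked correctly. The only cosmetic mismatch is that $\s,\z\in\R^p$ while the bounds are stated in terms of $n$, but that inconsistency is already present in the paper's own statements of Lemmas \ref{main1} and \ref{extension} and is not introduced by you.
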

%\begin{proof}
%The first relation of (\ref{rela1}) is given as follows.
%\begin{eqnarray}
%\frac{| \dot{\s}^{\T} \dot{s} |}{n} = \frac{| (\D^{-1} \dot{\s})^{\T} 
%(\D \dot{\z}) |}{n}
%\le \frac{ \| \D^{-1}  \dot{\s} \| \cdot  \| \D \dot{\z}\|}{n}  \le  C_2^2 \mu_k.
%\end{eqnarray}
%Similarly, we have
%\begin{eqnarray}
%\frac{| \ddot{\s}^{\T} \dot{\z} |}{n} = \frac{| (\D^{-1} \ddot{\s})^{\T} 
%(\D \dot{\z}) |}{n}
%\le \frac{ \| \D^{-1}  \ddot{\s} \| \cdot  \| \D \dot{\z}\|}{n}  
%\le  C_2C_3 \sqrt{n}\mu_k,
%\end{eqnarray}
%and
%\begin{eqnarray}
%\frac{| \dot{\s}^{\T} \ddot{\z} |}{n} 
%= \frac{| (\D^{-1} \dot{\s})^{\T} (\D \ddot{\z}) |}{n}
%\le \frac{ \| \D^{-1}  \dot{\s} \| \cdot  \| \D \ddot{\z}\|}{n} 
% \le  C_2C_3 \sqrt{n}\mu_k.
%\end{eqnarray}
%The first relation of (\ref{rela2}) is given as follows.
%\begin{eqnarray}
%| \dot{s}_i \dot{z}_i | = | D_{ii}^{-1} \dot{s}_i D_{ii} \dot{z}_i |
%\le | D_{ii}^{-1} \dot{s}_i | \cdot | D_{ii} \dot{z}_i |
%\le \| \D^{-1}  \dot{\s} \| \cdot  \| \D \dot{\z}\| \le  C_2^2 n \mu_k.
%\end{eqnarray}
%Similar arguments can be used for the rest inequalities of (\ref{rela2}).
%\hfill \qed
%\end{proof}

%Now we are ready to show that there exists a constant 
%$\kappa_0={\mathcal O}({n}^{\frac{3}{2}})$ such that for every 
%iteration, for some $\sigma_k \in [\sigma_{\min},\sigma_{\max}]$ and
%all $\sin(\alpha_k) \in (0,\frac{1}{\kappa_0}]$, all conditions 
%in Step 5 of Algorithm \ref{mainAlgo1} hold.
Lemmas \ref{main1}, \ref{extension}, and \ref{inequalities} will
be used in finding the lower bound of $\alpha_k$.

\begin{lemma} There is a positive constant $C_4$ independent of 
$m$ and $n$, and an $\bar{\alpha}$ defined by 
$\sin(\bar{\alpha}) \ge \frac{C_4}{\sqrt{n}}$ such that
for $\forall k \ge 0$ and $\sin(\alpha_k) \in (0,\sin(\bar{\alpha})]$,
\begin{equation}
(s_i^{k+1},z_i^{k+1}):=(s_i(\sigma_k,\alpha_k), z_i(\sigma_k,\alpha_k) ) 
\ge (\phi_k, \psi_k) >0
\label{posity}
\end{equation}
holds.
\label{positiveCond}
\end{lemma}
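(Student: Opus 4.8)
The plan is to prove Lemma \ref{positiveCond} by showing that the two sufficient conditions (\ref{posi3}) and (\ref{posi4}) hold for all $\sin(\alpha_k)$ below a threshold of order $1/\sqrt{n}$. Recall from (\ref{posi1})--(\ref{posi3}) that $s_i^{k+1} \ge \phi_k$ is guaranteed once
\[
(1-\rho)s_i^k - \dot{s}_i\sin(\alpha_k) + \ddot{s}_i(1-\cos(\alpha_k)) \ge 0,
\]
and symmetrically for $z_i^{k+1}$ with $\dot{z}_i, \ddot{z}_i$. First I would replace $\ddot{s}_i = (\p_{\s}\sigma_k + \q_{\s})_i$ and bound everything componentwise. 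Since $\ddot{\v}$ depends on $\sigma_k \in [0,1]$, I would take the worst case over $\sigma_k$, using $|\ddot{s}_i| \le |(\p_{\s})_i| + |(\q_{\s})_i|$.

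The key estimates come from Lemma \ref{extension}: writing $s_i^k = D_{ii}^k \sqrt{s_i^k z_i^k}$ and using $s_i^k z_i^k \ge \theta\mu_k$ together with $\|\D^{-1}\dot{\s}\| \le C_2\sqrt{n\mu_k}$, $\|\D^{-1}\p_{\s}\| \le \sqrt{n/\theta}\,\mu_k^{0.5}$, and $\|\D^{-1}\q_{\s}\| \le (2C_2^2/\sqrt{\theta})\,n\mu_k^{0.5}$, I would get pointwise bounds
\[
|\dot{s}_i| \le \|\D\|_{\infty}\,\|\D^{-1}\dot{\s}\| , \qquad
|\ddot{s}_i| \le \|\D\|_{\infty}\,(\|\D^{-1}\p_{\s}\| + \|\D^{-1}\q_{\s}\|),
\]
and similarly for the $z$-components with $\D^{-1}$ replaced by $\D$. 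The factor $\|\D\|_{\infty}$ (resp. $\|\D^{-1}\|_{\infty}$) is controlled by Lemma \ref{cond1}: $D_{ii}^k \le (C_1/\nu_k)\sqrt{n\mu_k}$, and then $\nu_k$ is bounded below away from zero on the relevant terms via Lemma \ref{betaGt0} together with the definitions (\ref{betak})--(\ref{beta}), which give $s_i^k \ge \beta\nu_k$ and $z_i^k \ge \beta\nu_k$ with $\beta \ge \min\{\underline{s}_0,\underline{z}_0,1\}$. Combining these, $|\dot{s}_i|$ and $|\ddot{s}_i|$ are $O(\sqrt{n})\cdot$(a quantity comparable to $s_i^k$ or to $\sqrt{\mu_k}$); crucially the lower bound $(1-\rho)s_i^k$ on the left is of the same order in $s_i^k$.

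Next I would use the sinusoidal identity (\ref{subineq}), namely $0 \le 1-\cos(\alpha_k) \le \sin^2(\alpha_k) \le \sin(\alpha_k)$, so the quadratic-in-$\alpha$ term $\ddot{s}_i(1-\cos(\alpha_k))$ is dominated by $|\ddot{s}_i|\sin^2(\alpha_k)$ and the linear term is $|\dot{s}_i|\sin(\alpha_k)$. The inequality to enforce becomes
\[
(1-\rho)s_i^k \ge |\dot{s}_i|\sin(\alpha_k) + |\ddot{s}_i|\sin^2(\alpha_k),
\]
and since the right side is increasing in $\sin(\alpha_k)$ on the range of interest, it suffices to choose $\sin(\bar\alpha)$ small enough that the right side at $\sin(\alpha_k) = \sin(\bar\alpha)$ is at most the left side. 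Plugging in $|\dot{s}_i| = O(\sqrt{n}\,s_i^k$-scale$)$ shows $\sin(\bar\alpha)$ of order $1/\sqrt{n}$ handles the linear term, and the quadratic term with its extra $\sin(\bar\alpha) = O(1/\sqrt{n})$ factor contributes only lower order. Carrying the constants through yields $\sin(\bar\alpha) \ge C_4/\sqrt{n}$ for a suitable $C_4$ depending only on $\rho, \theta, C_1, C_2, C_3, \beta$, none of which depend on $n$ or $m$. The same argument applied to (\ref{posi4}) gives the bound for $z_i^{k+1}$, and taking the smaller of the two thresholds proves (\ref{posity}).

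The main obstacle I anticipate is making the dependence on $s_i^k$ (versus $\sqrt{\mu_k}$ or $\nu_k$) line up cleanly: the lower bound is $(1-\rho)s_i^k$ while the natural bounds from Lemma \ref{extension} are in terms of $\mu_k^{0.5}$ scaled by $D_{ii}^k$ or $(D_{ii}^k)^{-1}$, so one must carefully use $s_i^k = D_{ii}^k(s_i^k z_i^k)^{1/2} \ge D_{ii}^k(\theta\mu_k)^{1/2}$ in the right direction and invoke Lemma \ref{cond1} to cancel the $D_{ii}^k$ factors, absorbing the residual $\nu_k$ via Lemma \ref{betaGt0}. This bookkeeping — rather than any deep idea — is where the proof needs care, and it mirrors the corresponding argument in \cite{yang18}.
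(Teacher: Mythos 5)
Your proposal is correct and follows essentially the same route the paper intends (the paper omits the details, citing \cite{yang18} together with (\ref{posi1}), (\ref{phi}) and Lemmas \ref{main1}, \ref{inequalities}, \ref{sincos}): reduce to the sufficient conditions (\ref{posi3})--(\ref{posi4}), bound $|\dot{s}_i|$ and $|\ddot{s}_i|$ componentwise by constant multiples of $\sqrt{n}\,s_i^k$ and $n\,s_i^k$ using $s_i^k \ge D_{ii}^k\sqrt{\theta\mu_k}$ with Lemmas \ref{main1} and \ref{extension}, and use $1-\cos(\alpha)\le\sin^2(\alpha)$ to absorb both terms into $(1-\rho)s_i^k$ once $\sin(\alpha_k)\le C_4/\sqrt{n}$. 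One caution: the detour through Lemma \ref{cond1} and Lemma \ref{betaGt0} to control $\|\D\|_{\infty}$ is unnecessary and, taken alone, would leave an uncontrolled ratio $\nu_k^2/\mu_k$; the direct cancellation $|\dot{s}_i|/s_i^k \le \|\D^{-1}\dot{\s}\|/\sqrt{\theta\mu_k}$ that you identify at the end is the step that actually closes the argument.
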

\begin{proof}
%From (\ref{posi1}) and (\ref{phi}), a conservative estimation can be obtained by 
%\begin{eqnarray}
%s_i(\sigma_k,\alpha_k) = s_i^k-\dot{s}_i \sin(\alpha_k) 
%+\ddot{s}_i(1-\cos(\alpha_k)) \ge \rho s_i^k
%\nonumber
%\end{eqnarray}
%which is equivalent to 
%\[
%s_i^k(1-\rho)-\dot{s}_i \sin(\alpha_k) +\ddot{s}_i(1-\cos(\alpha_k)) \ge 0.
%\]
%Multiplying $D_{ii}^{-1}$ to this inequality and using Lemmas \ref{main1}, 
%\ref{inequalities}, and \ref{sincos}, we have 
%\begin{eqnarray}
%(s_i^kz_i^k)^{0.5}(1-\rho)-D_{ii}^{-1} \dot{s}_i \sin(\alpha_k) 
%+D_{ii}^{-1} \ddot{z}_i(1-\cos(\alpha_k)) 
%\nonumber \\
%\ge  \sqrt{\mu_k } \left(\sqrt{\theta} (1-\rho)-C_2\sqrt{n}\sin(\alpha_k) -C_3n \sin^2(\alpha_k) \right).
%\nonumber
%\end{eqnarray}
%Clearly, the last expression is greater than zero for all 
%$ \sin(\alpha_k) \le \frac{C_4}{\sqrt{n}} \le \sin(\bar{\alpha})$, 
%where $C_4 = \frac{\sqrt{\theta} (1-\rho)}{2\max\{C_2, \sqrt{C_3}\}}$. 
%This proves $s_i^{k+1} \ge \phi_k >0$. Similarly, we have $z_i^{k+1} \ge \psi_k>0$.
Using  (\ref{posi1}), (\ref{phi}), and Lemmas \ref{main1}, 
\ref{inequalities}, \ref{sincos}, the proof is straightforward 
and similar to the proof used in \cite{yang18}.
\hfill \qed
\end{proof}

\begin{lemma} 
There is a positive constant $C_5$ independent of $n$ and $m$,
and an $\hat{\alpha}$ defined by 
$\sin(\hat{\alpha}) \ge \frac{C_5}{n^{\frac{1}{4}}}$ such that
for $\forall k \ge 0$ and $\sin(\alpha) \in (0,\sin(\hat{\alpha})]$, the following relation
\begin{equation}
\mu_k (\sigma_k,\alpha_k) \le \mu_k \left( 1 - \frac{\sin(\alpha_k)}{4} \right)
\le \mu_k \left( 1 - \frac{C_5}{4n^{\frac{1}{4}}} \right)
\label{polynoty}
\end{equation}
\label{objDec}
\end{lemma}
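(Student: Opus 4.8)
The plan is to start from the exact expression for $\mu_{k+1}=\mu(\sigma_k,\alpha_k)$ provided by Proposition~\ref{simple2}, namely
\[
\mu(\sigma_k,\alpha_k)=\frac{1}{p}\bigl[a_u(\alpha_k)\sigma_k+b_u(\alpha_k)\bigr],
\]
and to bound the two coefficients using the estimates already collected. From the formula for $b_u(\alpha_k)$ one has $b_u(\alpha_k)=p\mu_k(1-\sin(\alpha_k))-\bigl[\dot{\z}^{\T}\dot{\s}(1-\cos(\alpha_k))^2+(\dot{\s}^{\T}\q_{\z}+\dot{\z}^{\T}\q_{\s})\sin(\alpha_k)(1-\cos(\alpha_k))\bigr]$; invoking Lemma~\ref{inequalities} to control $|\dot{\s}^{\T}\dot{\z}|\le C_2^2 n\mu_k$ together with the Cauchy--Schwarz-type bounds $|\dot{\s}^{\T}\q_{\z}|\le\|\D^{-1}\dot{\s}\|\,\|\D\q_{\z}\|$ and $|\dot{\z}^{\T}\q_{\s}|\le\|\D\dot{\z}\|\,\|\D^{-1}\q_{\s}\|$ controlled through Lemmas~\ref{main1} and~\ref{extension}, every extra term beyond $p\mu_k(1-\sin(\alpha_k))$ is $O(n^2\mu_k)\cdot(1-\cos(\alpha_k))$, which by Lemma~\ref{sincos} is $O(n^2\mu_k\sin^2(\alpha_k))$. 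Similarly $a_u(\alpha_k)=p\mu_k(1-\cos(\alpha_k))-(\dot{\z}^{\T}\p_{\s}+\dot{\s}^{\T}\p_{\z})\sin(\alpha_k)(1-\cos(\alpha_k))$ is bounded in absolute value by $O(n^{1.5}\mu_k)\cdot(1-\cos(\alpha_k))=O(n^{1.5}\mu_k\sin^2(\alpha_k))$ after using the $\p$-bounds of Lemma~\ref{extension}.

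Next I would combine these: since $\sigma_k\in[0,1]$,
\[
\mu(\sigma_k,\alpha_k)\le\mu_k(1-\sin(\alpha_k))+\frac{1}{p}|a_u(\alpha_k)|+\frac{1}{p}\bigl|b_u(\alpha_k)-p\mu_k(1-\sin(\alpha_k))\bigr|\le\mu_k(1-\sin(\alpha_k))+C\,\mu_k\,n^2\sin^2(\alpha_k)
\]
for a constant $C$ independent of $n$ and $m$ (absorbing $C_2,C_3,1/\sqrt{\theta}$, etc.). Rewriting the right-hand side as $\mu_k\bigl(1-\sin(\alpha_k)+Cn^2\sin^2(\alpha_k)\bigr)=\mu_k\bigl(1-\sin(\alpha_k)(1-Cn^2\sin(\alpha_k))\bigr)$, I want the bracket $1-Cn^2\sin(\alpha_k)$ to be at least $\tfrac14$, i.e. $\sin(\alpha_k)\le\tfrac{3}{4Cn^2}$. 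That would only give $\sin(\hat\alpha)\sim n^{-2}$, which is too weak; the sharper route is to exploit the extra factor $\sin(\alpha_k)$ in the error term more carefully. Writing $\mu(\sigma_k,\alpha_k)\le\mu_k\bigl(1-\sin(\alpha_k)+Cn^2\sin^2(\alpha_k)\bigr)$ and choosing $\sin(\hat\alpha)$ so that $Cn^2\sin^2(\hat\alpha)\le\tfrac34\sin(\hat\alpha)$, i.e. $\sin(\hat\alpha)=\tfrac{3}{4Cn^2}$, still gives the wrong power; the stated bound $\sin(\hat\alpha)\ge C_5/n^{1/4}$ forces the error term to actually be $O(\mu_k\sqrt n\,\sin^2(\alpha_k))$ rather than $O(n^2)$, so I would re-examine which terms genuinely dominate — the $\dot{\s}^{\T}\dot{\z}(1-\cos\alpha)^2$ term is $O(n\mu_k\sin^4\alpha)$, and the mixed terms with $\q$ carry the $1-\cos\alpha$ once, giving $O(n\mu_k\sin^3\alpha)$ after one more application of Lemma~\ref{sincos}, so that the genuinely worst term is the $a_u$ contribution $O(n^{1.5}\mu_k\sin^2\alpha)$. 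Then $\mu(\sigma_k,\alpha_k)\le\mu_k(1-\sin(\alpha_k)+C'n^{1.5}\sin^2(\alpha_k))$, and demanding $C'n^{1.5}\sin(\alpha_k)\le\tfrac34$ gives $\sin(\hat\alpha)\sim n^{-1.5}$, still short of $n^{-1/4}$; therefore the decisive observation must be that $\sigma_k$ is not free but is chosen by the simultaneous selection of Appendix~\ref{sec:selection} to be $O(1/\sqrt n)$ or smaller so that $a_u(\alpha_k)\sigma_k$ picks up an extra $n^{-1/2}$, reducing the worst term to $O(n\mu_k\sin^2\alpha)$ and then to $O(\mu_k\sqrt n\,\sin^2\alpha)$ via $1-\cos\alpha\le\sin^2\alpha$. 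With $\sin(\hat\alpha)=C_5/n^{1/4}$ the error term $\sqrt n\,\sin^2(\hat\alpha)=\sqrt n\cdot C_5^2/\sqrt n=C_5^2$ is a constant, which we make $\le\tfrac34\sin(\alpha_k)\cdot n^{1/4}/C_5$...

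Concretely, once the error term is pinned at $O(\mu_k\sqrt n\,\sin^2(\alpha_k))\le\tfrac34\mu_k\sin(\alpha_k)$ for all $\sin(\alpha_k)\le C_5/n^{1/4}$ (which is exactly the defining property of $C_5$), we conclude $\mu(\sigma_k,\alpha_k)\le\mu_k(1-\sin(\alpha_k)+\tfrac34\sin(\alpha_k))=\mu_k(1-\tfrac14\sin(\alpha_k))$, and since on the admissible interval $\sin(\alpha_k)$ can be taken equal to $\sin(\hat\alpha)\ge C_5/n^{1/4}$, the second inequality $\mu_k(1-\tfrac14\sin(\alpha_k))\le\mu_k(1-C_5/(4n^{1/4}))$ follows. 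I expect the main obstacle to be the bookkeeping that tracks exactly which power of $n$ each term in $a_u,b_u$ carries — in particular justifying that the $\sigma_k$-dependent term $a_u(\alpha_k)\sigma_k$ does not dominate, which requires either the a priori $\sigma_k=O(n^{-1/2})$ bound from the selection procedure or a direct argument that $a_u(\alpha_k)\le p\mu_k(1-\cos\alpha_k)$ up to lower-order corrections so that its contribution is absorbed into $O(\mu_k\sqrt n\sin^2\alpha_k)$. The rest is a routine estimate combining Lemmas~\ref{main1}, \ref{extension}, \ref{inequalities}, and \ref{sincos} exactly as in the cited reference \cite{yang18}, and I would present it in that compressed style.
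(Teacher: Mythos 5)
Your overall strategy---start from the exact expression in Proposition \ref{simple2} and bound $a_u$ and $b_u$ via Lemmas \ref{main1}, \ref{extension}, \ref{inequalities} and \ref{sincos}---is exactly the intended route (the paper compresses its own proof to a citation of these results and of \cite{yang18}). The difficulty is that your power-of-$n$ bookkeeping never closes, and the patches you reach for are not available. First, you repeatedly lose the $\frac{1}{p}$ normalization in $\mu_{k+1}=\frac{1}{p}[a_u\sigma_k+b_u]$: with $p$ comparable to $n$ this division removes one full power of $n$ from every term, which is why your intermediate estimates $O(n^2\mu_k\sin^2\alpha)$ and $O(n^{1.5}\mu_k\sin^2\alpha)$ come out too large. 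Second, the rescue you then propose---that the selection procedure forces $\sigma_k=O(n^{-1/2})$---is not true: Step 5 of Algorithm \ref{mainAlgo1} only requires $\sigma_k\in[0,1]$, and no such bound is established anywhere. It is also unnecessary, because $\frac{1}{p}\,p\mu_k(1-\cos\alpha)\sigma_k\le\mu_k\sin^2\alpha$ already holds for $\sigma_k\le 1$, so the $\sigma_k$-dependent term is harmless once $\sin\alpha$ is below an absolute constant. Third, your final claim that the error is ``pinned at $O(\mu_k\sqrt{n}\sin^2(\alpha_k))$'' cannot yield the lemma: $\sqrt{n}\sin^2\alpha\le\frac{3}{4}\sin\alpha$ forces $\sin\alpha=O(n^{-1/2})$, not $O(n^{-1/4})$, and indeed your own computation at $\sin\hat{\alpha}=C_5 n^{-1/4}$ produces a constant $C_5^2$ that you cannot absorb; the argument then stops mid-inequality.

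The missing observation is that the binding correction term carries three powers of $\sin\alpha$, not two. After dividing by $p$, the dangerous term in $b_u$ is $\frac{1}{p}|\dot{\s}^{\T}\q_{\z}+\dot{\z}^{\T}\q_{\s}|\sin\alpha(1-\cos\alpha)$; Cauchy--Schwarz in the $\D$-scaled norms together with Lemmas \ref{main1} and \ref{extension} gives $|\dot{\s}^{\T}\q_{\z}|\le\|\D^{-1}\dot{\s}\|\,\|\D\q_{\z}\|=O(n^{3/2}\mu_k)$, so this term is $O(\sqrt{n}\,\mu_k\sin^3\alpha)$ by (\ref{subineq}). All other corrections ($\mu_k\sin^2\alpha$ from the centering term, $O(\mu_k\sin^4\alpha)$ from $\dot{\z}^{\T}\dot{\s}(1-\cos\alpha)^2$, $O(\mu_k\sin^3\alpha)$ from the $\p$-cross terms in $a_u$) are dominated once $\sin\alpha$ is below a fixed constant. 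Requiring $O(\sqrt{n}\sin^3\alpha)\le\frac{3}{8}\sin\alpha$, i.e.\ $\sqrt{n}\sin^2\alpha$ bounded by a constant, is precisely what produces $\sin\hat{\alpha}\ge C_5 n^{-1/4}$ with $C_5$ independent of $n$ and $m$; then $\mu_{k+1}\le\mu_k(1-\frac{1}{4}\sin\alpha_k)$ and the second inequality follows by taking $\sin\alpha_k=\sin\hat{\alpha}$. Your proposal never isolates this $\sin^3\alpha$ structure, so as written it does not establish the stated $n^{-1/4}$ threshold.
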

holds.
\begin{proof}
Using Proposition \ref{simple2}, Lemmas \ref{sincos} and \ref{inequalities}
and the similar idea used in \cite{yang18}, 
we can easily prove the result. 
%we have
%\begin{eqnarray}
%\mu (\sigma_k, \alpha_k) & = & \mu_k (1- \sin(\alpha_k)) 
%+\sigma_k \mu_k (1-\cos(\alpha_k)) 
%-  \frac{ \dot{\s}^{\T}\dot{\z} }{n} (1-\cos(\alpha_k))^2
%\nonumber \\
%& & - \frac{ \dot{\s}^{\T} \ddot{\z}  
%    + \dot{\z}^{\T} \ddot{\s} }{n}\sin(\alpha_k) (1-\cos(\alpha_k))
%\nonumber \\
%& \le & \mu_k \left[
%1-\sin(\alpha_k) + \sigma_k \sin^2(\alpha_k) \right] 
%\nonumber \\
%& & + \left[ \frac{\lvert \dot{\s}^{\T} \dot{\z}\lvert }{n} \sin^4(\alpha_k)
%+ \left(\frac{\lvert \dot{\s}^{\T} \ddot{\z}\lvert }{n} 
%+ \frac{\lvert \ddot{\s}^{\T} \dot{\z}\lvert }{n} \right) \sin^3(\alpha_k)
%\right]
%\nonumber \\
%& \le & \mu_k \left[
%1-\sin(\alpha_k) + \sigma_k \sin^2(\alpha_k) + C_2^2 \sin^4(\alpha_k)+2C_2C_3 \sqrt{n} \sin^3(\alpha_k) \right]
%\nonumber \\
%& = & \mu_k \left[
%1-\sin(\alpha_k) \left( 1 - \sigma_k \sin(\alpha_k) - C_2^2 \sin^3(\alpha_k)-2C_2C_3 \sqrt{n} \sin^2(\alpha_k)
%\right) \right].
%\nonumber 
%\end{eqnarray}
%Let 
%\[
%C_5=\frac{1}{5 \max \{ \sigma_k, C_2^2, 2C_2C_3 \}}.
%\]
%then, for all 
%$ \sin(\alpha_k) \in (0, \sin(\hat{\alpha})]$ 
%and $\sigma_{\min} \le  \sigma_k \le \sigma_{\max}$, 
%inequality (\ref{polynoty}) holds. This completes the proof.
\hfill\qed
\end{proof}

\begin{lemma} 
There is a positive constant $C_6$ independent of $n$ and $m$, an
$\check{\alpha}$ defined by $\sin(\check{\alpha}) \ge \frac{C_6}{n^{\frac{3}{2}}}$ 
such that if $s_i^{k}z_i^{k} \ge \theta \mu_{k}$ holds, then for $\forall k \ge 0$,
$\forall i \in \{ 1, \ldots, n\}$, and $\sin(\alpha) \in (0,\sin(\check{\alpha})]$, the following relation
\begin{equation}
s_i^{k+1}z_i^{k+1} \ge \theta \mu_{k+1}
\label{polycompl}
\end{equation}
\label{barrier}
\end{lemma}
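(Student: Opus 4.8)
The plan is to read off~(\ref{polycompl}) by confronting the componentwise lower bound on $s_i^{k+1}z_i^{k+1}$ furnished by Proposition~\ref{thirdBound} with the closed form of $\mu_{k+1}$ furnished by Proposition~\ref{simple2}. Write $s=\sin(\alpha_k)$ and $c=1-\cos(\alpha_k)$. From Proposition~\ref{thirdBound}, the hypothesis $s_i^kz_i^k\ge\theta\mu_k$, and $1-s\ge0$, one gets $s_i^{k+1}z_i^{k+1}\ge\theta\mu_k(1-s)+\sigma_k\mu_k c-(\ddot{s}_i^k\dot{z}_i^k+\dot{s}_i^k\ddot{z}_i^k)sc+(\ddot{s}_i^k\ddot{z}_i^k-\dot{z}_i^k\dot{s}_i^k)c^2$, while $\theta$ times~(\ref{usigmaAlpha}) gives $\theta\mu_{k+1}=\theta\mu_k(1-s)+\theta\sigma_k\mu_k c-\frac{\theta}{p}(\dot{\z}^{\T}\p_{\s}+\dot{\s}^{\T}\p_{\z})sc\,\sigma_k-\frac{\theta}{p}\dot{\z}^{\T}\dot{\s}\,c^2-\frac{\theta}{p}(\dot{\s}^{\T}\q_{\z}+\dot{\z}^{\T}\q_{\s})sc$. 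The crucial point is that subtracting the second from the first cancels the whole $\theta\mu_k(1-s)$ term and most of the $\sigma_k\mu_k c$ term (since $0<\theta<1$), so that $s_i^{k+1}z_i^{k+1}-\theta\mu_{k+1}$ is bounded below by the nonnegative centering gain $(1-\theta)\sigma_k\mu_k c$ plus a fixed list of bilinear error terms, each carrying a factor $sc$ or $c^2$. (Replacing the exact $\mu_{k+1}$ by the cruder bound $\mu_{k+1}\le\mu_k(1-s/4)$ of Lemma~\ref{objDec} would be too lossy here, so Proposition~\ref{simple2} is precisely what is needed.)

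The next step is to bound each error term by $\mu_k$ times a power of $n$ times $sc$ or $c^2$. Lemma~\ref{inequalities} supplies $|\dot{s}_i^k\dot{z}_i^k|=O(n\mu_k)$, $|\ddot{s}_i^k\dot{z}_i^k|,|\dot{s}_i^k\ddot{z}_i^k|=O(n^{3/2}\mu_k)$, and $|\ddot{s}_i^k\ddot{z}_i^k|=O(n^2\mu_k)$; Cauchy--Schwarz together with Lemmas~\ref{main1} and \ref{extension} (and again $s_i^kz_i^k\ge\theta\mu_k$) gives $|\dot{\z}^{\T}\p_{\s}|,|\dot{\s}^{\T}\p_{\z}|=O(n\mu_k)$, $|\dot{\z}^{\T}\q_{\s}|,|\dot{\s}^{\T}\q_{\z}|=O(n^{3/2}\mu_k)$, and $|\dot{\s}^{\T}\dot{\z}|=O(n\mu_k)$. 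Lemma~\ref{sincos}, in the form $\tfrac12 s^2\le c\le s^2$, then converts $sc\le s^3$ and $c^2\le s^4$, so the total error is $O(n^{3/2}\mu_k s^3+n^2\mu_k s^4)$, whereas the centering gain obeys $(1-\theta)\sigma_k\mu_k c\ge\tfrac{1-\theta}{2}\sigma_k\mu_k s^2$. Restricting $\sin(\alpha_k)$ to $(0,C_6/n^{3/2}]$ and setting $\sin(\check{\alpha})=C_6/n^{3/2}$ makes $n^{3/2}s\le C_6$ and $n^2s^2\le C_6^2$, so the total error is at most $O(C_6)\,\mu_k s^2$; choosing the constant $C_6$ small enough then makes it no larger than the centering gain, giving $s_i^{k+1}z_i^{k+1}-\theta\mu_{k+1}\ge0$, which is~(\ref{polycompl}). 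This is the same line of argument as in \cite{yang18}, and together with Lemma~\ref{positiveCond} it shows that~(\ref{cond4}) is propagated by the iteration.

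The step I expect to be the main obstacle is the bookkeeping around the centering parameter. Since $\ddot{\v}=\p\sigma_k+\q$, one has $\ddot{s}_i^k=(\p_{\s})_i\sigma_k+(\q_{\s})_i$ and $\ddot{z}_i^k=(\p_{\z})_i\sigma_k+(\q_{\z})_i$, so the error terms themselves carry hidden $\sigma_k$-dependence, and one must check that for \emph{every} admissible $\sigma_k\in[0,1]$ selected in Step~5 of Algorithm~\ref{mainAlgo1} the nonnegative centering gain really does dominate the sign-indefinite remainder; in particular the analysis relies on $\sigma_k$ staying bounded below by a positive constant, since as $\sigma_k\to0$ the gain vanishes while the $O(n^{3/2}\mu_k s^3)$ error does not. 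The clean route is to expand the products in powers of $\sigma_k$, collect coefficients, verify that none grows faster than $n^2\mu_k$, and use the sign-definite quantities of Lemma~\ref{cross} (the $\p$- and $\q$-self products, and $\ddot{\s}^{\T}\ddot{\z}\ge0$) to trim the $c^2$-terms; the worst surviving contribution is the $n^{3/2}$ one coming from $\ddot{s}_i^k\dot{z}_i^k$, and it is exactly this exponent that pins the threshold at $\sin(\check{\alpha})\ge C_6/n^{3/2}$. Once all the per-term bounds are in place, the conclusion is a one-line comparison.
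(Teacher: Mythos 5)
Your proposal is correct and follows essentially the same route as the paper, whose proof is only sketched but cites precisely the ingredients you use: the lower bound (\ref{thirdBound1}), the expression for $\mu_{k+1}$ in Proposition \ref{simple2}, the bilinear estimates of Lemma \ref{lemma5}, and the sinusoidal inequalities of Lemma \ref{sincos}, combined exactly as in \cite{yang18}. The subtlety you flag about the centering gain vanishing as $\sigma_k\to 0$ is real, but it is inherited from the paper's own (omitted) argument rather than introduced by your proof.
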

holds.
\begin{proof}
Using (\ref{thirdBound1}), Proposition \ref{simple2}, and Lemma \ref{lemma5}
and the similar idea used in \cite{yang18}, 
we can easily prove the result. 
%we have
%\begin{eqnarray}
%& & s_i^{k+1} z_i^{k+1} - \theta \mu_{k+1} \nonumber \\
%& \ge & \sigma_k \mu_k (1-\theta)  (1-\cos(\alpha_k))  \nonumber \\
%&  & 
%-\left[ \ddot{s}_i^k \dot{z}_i^k+\dot{s}_i^k \ddot{z}_i^k - 
%\frac{ \theta ( \dot{\s}^{\T} \ddot{\z}  + \dot{\z}^{\T} \ddot{\s}) }{n} \right]
% \sin(\alpha_k) (1-\cos(\alpha_k))
% \nonumber \\ &  & 
%+ \left[ \ddot{s}_i^k \ddot{z}_i^k-\dot{s}_i^k \dot{z}_i^k 
%+\frac{ \theta (\dot{\s}^{\T}\dot{\z} ) }{n} \right]
%(1-\cos(\alpha_k))^2.
%\end{eqnarray}
%Therefore, if 
%\begin{eqnarray}
%\sigma_k \mu_k (1-\theta) & - &
%\left[ \ddot{s}_i^k \dot{z}_i^k+\dot{s}_i^k \ddot{z}_i^k - 
%\frac{ \theta ( \dot{\s}^{\T} \ddot{\z}  + \dot{\z}^{\T} \ddot{\s}) }{n} \right]
% \sin(\alpha_k)  \nonumber \\
%& + & \left[ \ddot{s}_i^k \ddot{z}_i^k-\dot{s}_i^k \dot{z}_i^k 
%+\frac{ \theta (\dot{\s}^{\T}\dot{\z} ) }{n} \right]
%(1-\cos(\alpha_k)) \ge 0,
%\label{condition}
%\end{eqnarray}
%then
%\[
% s_i^{k+1} z_i^{k+1} \ge \theta \mu_{k+1}.
%\]
%The inequality (\ref{condition}) holds if
%\begin{eqnarray}
%\sigma_k \mu_k (1-\theta) 
%& - & \left( |\ddot{s}_i^k \dot{z}_i^k|+|\dot{s}_i^k \ddot{z}_i^k|
%+\Big| \frac{ \theta ( \dot{\s}^{\T} \ddot{\z}  
%+ \dot{\z}^{\T} \ddot{\s}) }{n} \Big| \right) \sin(\alpha_k)
%\nonumber \\
%& - & \left( |\ddot{s}_i^k \ddot{z}_i^k|+|\dot{s}_i^k \dot{z}_i^k |
%+\Big| \frac{ \theta (\dot{\s}^{\T}\dot{\z} ) }{n} \Big| \right)
% \sin^2(\alpha_k) \ge 0. \nonumber
%\end{eqnarray}
%Using Lemma \ref{lemma5}, we can easily find some $\check{\alpha}$ 
%defined by $\sin(\check{\alpha}) \ge \frac{C_6}{n^{\frac{3}{2}}}$ 
%to meet the above inequality.
\hfill\qed
\end{proof}

The following theorem given in \cite{wright97} has been used to
establish the polynomial bound for almost all interior-point algorithms.

\begin{theorem}[\cite{wright97}]
Let $\epsilon \in (0,1)$ be given. Suppose that an algorithm generates a sequence of iterations 
$\{ \chi_k \}$ that satisfies
\begin{equation}
\chi_{k+1} \le \left( 1 - \frac{\delta}{n^{\omega}} \right) \chi_k, 
\hspace{0.1in} k=0, 1, 2, \ldots,
\end{equation}
for some positive constants $\delta$ and $\omega$. Then there 
exists an index $K$ with
\[
K={\mathcal O}(n^{\omega}\log({\chi_0}/{\epsilon}))
\]
such that 
\[
\chi_k \le \epsilon \hspace{0.1in} {\rm for} \hspace{0.1in} \forall k \ge K.
\]
\hfill \qed
\label{white97}
\end{theorem}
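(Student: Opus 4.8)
The plan is to unroll the contraction inequality and then pass to logarithms; this is the classical bookkeeping argument (as in \cite{wright97}) that turns a linear rate of decrease into a polynomial iteration count. First I would observe that the hypothesis implicitly carries $0 < \delta/n^{\omega} < 1$ and $\chi_k \ge 0$ for all $k$, since otherwise the stated bound is vacuous; these serve as the only standing assumptions. A one-line induction on $k$ then gives
\[
\chi_k \le \left( 1 - \frac{\delta}{n^{\omega}} \right)^{k} \chi_0, \hspace{0.1in} k = 0, 1, 2, \ldots.
\]

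Next I would take logarithms of this inequality and apply the elementary estimate $\log(1+t) \le t$, valid for $t > -1$, with $t = -\delta/n^{\omega}$, obtaining
\[
\log \chi_k \le k \log\left( 1 - \frac{\delta}{n^{\omega}} \right) + \log \chi_0 \le -\frac{k\delta}{n^{\omega}} + \log \chi_0.
\]
To force $\chi_k \le \epsilon$ it therefore suffices that $-k\delta/n^{\omega} + \log \chi_0 \le \log \epsilon$, i.e.\ $k \ge (n^{\omega}/\delta)\log(\chi_0/\epsilon)$. Hence the choice
\[
K = \left\lceil \frac{n^{\omega}}{\delta} \log\frac{\chi_0}{\epsilon} \right\rceil = {\mathcal O}\left( n^{\omega} \log(\chi_0/\epsilon) \right)
\]
guarantees $\chi_K \le \epsilon$ (and $K = 0$ works trivially when $\chi_0 \le \epsilon$).

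Finally, to upgrade ``$\chi_K \le \epsilon$'' to ``$\chi_k \le \epsilon$ for all $k \ge K$'', I would invoke monotonicity: since $0 < 1 - \delta/n^{\omega} < 1$ and the iterates are nonnegative, the recursion yields $\chi_{k+1} \le \chi_k$ for every $k$, so once the sequence falls below $\epsilon$ it stays there. There is no genuine obstacle in this argument; the only point demanding a little care is ensuring $1 - \delta/n^{\omega}$ is strictly positive, so that both the logarithm and the bound $\log(1+t) \le t$ are legitimate. I would also note that the argument does not care which quantity plays the role of $\chi_k$, which is precisely why this theorem is applicable --- for instance with $\omega = 3/2$, $\chi_k = \mu_k$, and $\delta$ supplied by Lemmas \ref{objDec}--\ref{barrier} --- to conclude the announced ${\mathcal O}(n^{1.5}\log(1/\epsilon))$ complexity of Algorithm \ref{mainAlgo1}.
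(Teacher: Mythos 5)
Your argument is correct and is exactly the standard proof of this result (Theorem 3.2 in \cite{wright97}), which the paper itself does not reproduce but simply cites: unroll the recursion, apply $\log(1-t)\le -t$, and use monotonicity of the iterates to get the ``for all $k\ge K$'' conclusion. The only points needing care --- nonnegativity of $\chi_k$ and $0<1-\delta/n^{\omega}<1$ --- are ones you identify explicitly, so there is no gap.
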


The main result of the paper immediately follows from Lemmas \ref{basic}, 
\ref{positiveCond}, \ref{objDec}, \ref{barrier}, and Theorem \ref{white97}.
%of \cite{wright97}.

\begin{theorem}
Algorithms~\ref{mainAlgo1} is a polynomial algorithm with
polynomial complexity bound of 
${\mathcal O}({n}^{\frac{3}{2}} \max \{ \log({(\s^0)^{\T}\z^0}/{\epsilon}), 
\log({\r_C^0}/{\epsilon}),\log({\r_B^0}/{\epsilon}),\log({\r_I^0}/{\epsilon}) \} )$.
\end{theorem}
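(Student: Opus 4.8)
The plan is to assemble the final complexity result from the pieces already in place, treating it as a direct corollary of the preceding lemmas together with the abstract convergence engine of Theorem~\ref{white97}. First I would fix $\rho\in(0,1)$ and recall from Lemmas~\ref{positiveCond}, \ref{objDec}, and \ref{barrier} that there exist positive constants $C_4$, $C_5$, $C_6$, independent of $n$ and $m$, and corresponding angles $\bar\alpha$, $\hat\alpha$, $\check\alpha$ with $\sin(\bar\alpha)\ge C_4/\sqrt n$, $\sin(\hat\alpha)\ge C_5/n^{1/4}$, $\sin(\check\alpha)\ge C_6/n^{3/2}$, such that whenever $\sin(\alpha_k)$ is taken in the intersection of the three admissible intervals, the iterate $\v^{k+1}=\v^k\AN{\sigma_k,\alpha_k}$ simultaneously satisfies positivity $(\s^{k+1},\z^{k+1})>\0$, the centrality condition $\S^{k+1}\z^{k+1}\ge\theta\mu_{k+1}\e$, and the duality-measure decrease $\mu_{k+1}\le\mu_k(1-\sin(\alpha_k)/4)$. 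The governing angle is the smallest of the three lower bounds, so I would set $C_4':=\min\{C_4,C_5,C_6\}$ and observe $\sin(\alpha_k)\ge C_4'/n^{3/2}$ is feasible for every $k$, since $n^{3/2}$ dominates $n^{1/2}$ and $n^{1/4}$. This shows the step sizes generated by Step~5 of Algorithm~\ref{mainAlgo1} are bounded below and away from zero, hence Assumption (A4) and $\nu_k>0$ are consistent with the run, and Propositions~\ref{basic}, \ref{secondBound}, \ref{simple2}, \ref{thirdBound} all apply.

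Next I would track the four residual/gap quantities in parallel. By Proposition~\ref{basic}, $\r_C^{k}=\r_C^0\nu_{k-1}$, $\r_E^{k}=\r_E^0\nu_{k-1}$, $\r_I^{k}=\r_I^0\nu_{k-1}$ with $\nu_k=\prod_{j=0}^{k-1}(1-\sin(\alpha^j))$, and since each factor satisfies $1-\sin(\alpha_j)\le 1-C_4'/n^{3/2}$, we get $\nu_k\le(1-C_4'/n^{3/2})^{k}$; thus each of $\|\r_C^k\|$, $\|\r_E^k\|$, $\|\r_I^k\|$ contracts geometrically with ratio $1-C_4'/n^{3/2}$. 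For the duality gap, Lemma~\ref{objDec} gives $\mu_{k+1}\le(1-\sin(\alpha_k)/4)\mu_k\le(1-C_5/(4n^{1/4}))\mu_k$, which again is at least as strong as a contraction by $1-C_4'/n^{3/2}$ after possibly shrinking $C_4'$; so I would apply Theorem~\ref{white97} four times, once to each of $\chi_k\in\{\|\r_C^k\|,\|\r_E^k\|,\|\r_I^k\|,p\mu_k\}$ (equivalently $\s^{k\T}\z^k$), with $\omega=3/2$ and a common $\delta$ equal to the least of the four constants. Each application yields an index $K_i={\mathcal O}(n^{3/2}\log(\chi^i_0/\epsilon))$ after which $\chi^i_k\le\epsilon$.

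Finally I would take $K=\max_i K_i$, which is ${\mathcal O}\bigl(n^{3/2}\max\{\log((\s^0)^\T\z^0/\epsilon),\log(\r_C^0/\epsilon),\log(\r_E^0/\epsilon),\log(\r_I^0/\epsilon)\}\bigr)$ — matching the stated bound (the $\r_B^0$ in the statement being the equality-constraint residual $\r_E^0$) — and note that at any such $k$ the stopping test $\|\g(\v^k)\|\le\epsilon$ in Step~1 is met because $\|\g(\v^k)\|$ is controlled componentwise by $\|\r_C^k\|$, $\|\r_E^k\|$, $\|\r_I^k\|$, $\|\w^k-\z^k\|=0$ (Lemma~\ref{eqwz}), and $\|\Z^k\s^k\|\le\sqrt n\,\mu_k$ (using the centrality bound and $\s^{k\T}\z^k=p\mu_k$). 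Hence the algorithm terminates within the claimed number of iterations, each of which costs a polynomial amount of work (three linear solves with a shared factorization of the same Jacobian, by the remark following~(\ref{secondOrderQ})), so the overall bound is polynomial.

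The main obstacle, and the only place needing genuine care, is the bookkeeping that lets a single $\delta$ and a single $\omega=3/2$ serve all four sequences at once: the residuals contract at rate $1-O(n^{-3/2})$ while the gap contracts at the faster rate $1-O(n^{-1/4})$, so one must take the worst (largest) exponent $\omega=3/2$ and the worst constant, and verify that the faster-decaying gap still obeys the weaker recursion — which it trivially does. A secondary subtlety is confirming that the step size actually chosen in Step~5 can be taken to lie in the common admissible window for all $k$ simultaneously (not merely for each $k$ in isolation); this is exactly what Lemmas~\ref{positiveCond}--\ref{barrier} deliver, since their lower bounds $C_4/\sqrt n$, $C_5/n^{1/4}$, $C_6/n^{3/2}$ are uniform in $k$, and Lemma~\ref{betaGt0} guarantees the relevant quantities stay bounded away from zero throughout, so Assumption (A3$'$) is self-consistent along the run.
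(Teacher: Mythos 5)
Your proposal is correct and follows essentially the same route the paper intends: the paper itself states that the theorem ``immediately follows from Lemmas \ref{basic}, \ref{positiveCond}, \ref{objDec}, \ref{barrier}, and Theorem \ref{white97}'' and then omits the details by deferring to \cite{yang18}, and what you have written is precisely that assembly --- take the binding step-size lower bound $\sin(\alpha_k)\ge C/n^{3/2}$ from the three lemmas, feed the resulting geometric contractions of $\|\r_C^k\|$, $\|\r_E^k\|$, $\|\r_I^k\|$, and $\mu_k$ into Theorem \ref{white97} with $\omega=3/2$, and take the maximum of the four resulting iteration counts. Your identification of $\r_B^0$ as a typo for $\r_E^0$ is also right; the only cosmetic slip is the bound $\|\Z^k\s^k\|\le\sqrt{n}\,\mu_k$ (it should be $\|\Z^k\s^k\|\le p\mu_k$, since $\sum_i s_i^k z_i^k=p\mu_k$ with nonnegative terms), which only changes a constant inside the logarithm and does not affect the stated complexity.
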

\begin{proof}
The proof is similar to the one used in \cite{yang18} and therefore omitted.
\end{proof}

\section{Implementation and numerical test}\label{sec:test}

In this section, we briefly discuss a Matlab implementation of the
proposed algorithm and provide some preliminary test results.

\subsection{Matlab implementation}

Algorithm \ref{mainAlgo1} is  implemented as a Matlab function:
\newline \centerline
{\tt [x,obj,kk,infe]=arcConvex(AE,bE,AI,bI,f,v0,d)}
\newline
where AE and bE are the input matrix and vector for equality constraints
\footnote{If there is no equality constraint in the problem, then the
inputs of AE and bE are [ ] and [ ] respectively.}, 
AI and bI are the input matrix and vector for inequality constraints,
$f$ is the objective function of the convex nonlinear optimization problem,
which calls a function handle created in a separate file; $\x$ is the output
which returns the optimal solution, {\it obj} is the output which returns the 
optimal value of the convex nonlinear optimization problem, {\it kk} 
is the iteration number which is used to find the optimal solution, 
and {\it infe} is the norm of $\| \A_E \x - \b_E \|$ which should be small
when the program terminates.

In Algorithm \ref{mainAlgo1}, Step 1 involves the calculation of the
gradient of $\g(\v^k)$, and Step 2 involves the calculation of the 
gradient of $\nabla_{\x} L(\v^k)$ and the Hessian 
$\H=\nabla_{\x}^2 L(\v^k)$. To avoid manipulating analytical formulas
for every individual problem, which can be tedious and  
error prone, we adopted a piece of code used in \cite{yang15} that 
implements the automatic differentiation method discussed in \cite{nw06}. 
The optimal section of $\sigma$ and $\alpha$ is implemented exactly
as described in the Algorithms \ref{bisectionSearch} and \ref{Step5}.

\subsection{Preliminary numerical test}

The implemented Matlab code is tested for a few problems
that were found from different sources. We made no effort to
select the initial points for these problems. All problems have
a similar linear constraint set but different objective functions.

\vspace{0.03in}
\noindent
{\bf Example 1} \cite{kumar21}: This problem was posted in Researchgate
and a solution was solicited. The objective function is a two dimensional
logarithm function.
\begin{eqnarray} \nonumber 
& \min & -[(a_1 \log (x_1) - x_1 + b_1) + (a_2  \log (x_2) - x_2 + b_2)]
\\ \nonumber 
& s.t. & x_1+x_2 \le 10  
\\ \label{prob1}
& & \ell_1 \le x_1 \le u_1
\\ \nonumber 
& &  \ell_2 \le x_2 \le u_2,
\end{eqnarray}
where we set $a_1=5$, $b_1=7$, $a_2=7$, $b_2=8$, $\ell_1=1$, 
$\ell_2=1$, $u_1=10$, and $u_2=10$. Starting from initial point 
$\x^0=(5, 5)$, $\w^0=\z^0= (100, 100, 100, 100, 100)$, and
$\s^0= (0.01, 0.01, 0.01, 0.01, 0.01)$, after $68$ iterations,
we get $\x=(1,1)$ and the optimal value is $-13$.

\vspace{0.03in}
The rest examples are created based on \cite[pages 71-73]{bv14}.

\vspace{0.03in}
\noindent
{\bf Example 2} The objective function of this problem is a 
two dimensional exponential function.
\begin{eqnarray} \nonumber 
& \min & (a_1 e^{x_1} + b_1) + (a_2  e^{x_2} + b_2)
\\ \nonumber 
& s.t. & x_1+x_2 \le 10  
\\ \label{prob2}
& & \ell_1 \le x_1 \le u_1
\\ \nonumber 
& &  \ell_2 \le x_2 \le u_2,
\end{eqnarray}
where we set $a_1=5$, $b_1=7$, $a_2=7$, $b_2=8$, $\ell_1=2$, 
$\ell_2=1$, $u_1=10$, and $u_2=10$. Starting from initial point 
$\x^0=(5, 5)$, $\w^0=\z^0= (100, 100, 100, 100, 100)$, and
$\s^0= (0.01, 0.01, 0.01, 0.01, 0.01)$, after $66$ iterations,
we get $\x=(2,1)$ and the optimal value is $70.9733$.

\vspace{0.03in}
\noindent
{\bf Example 3} The objective function of this problem is a 
two dimensional power function of $\x^{\a}$ with the first power
greater than $1$ and the second power smaller than $0$.
\begin{eqnarray} \nonumber 
& \min & (a_1 x_1^3 + b_1) + (a_2  \frac{1}{x_2} + b_2)]
\\ \nonumber 
& s.t. & x_1+x_2 \le 10  
\\ \label{prob3}
& & \ell_1 \le x_1 \le u_1
\\ \nonumber 
& &  \ell_2 \le x_2 \le u_2,
\end{eqnarray}
where we set $a_1=5$, $b_1=7$, $a_2=7$, $b_2=8$, $\ell_1=1$, 
$\ell_2=2$, $u_1=10$, and $u_2=10$. Starting from initial point 
$\x^0=(5, 5)$, $\w^0=\z^0= (100, 100, 100, 100, 100)$, and
$\s^0= (0.01, 0.01, 0.01, 0.01, 0.01)$, after $69$ iterations,
we get $\x=(1,2)$ and the optimal value is $23.5000$.

\vspace{0.03in}
\noindent
{\bf Example 4} The objective function of this problem is a 
two dimensional negative entropy function.
\begin{eqnarray} \nonumber 
& \min & (a_1 x_1\log (x_1) + b_1) + (a_2  x_2 \log (x_2) + b_2)
\\ \nonumber 
& s.t. & x_1+x_2 \le 10  
\\ \label{prob4}
& & \ell_1 \le x_1 \le u_1
\\ \nonumber 
& &  \ell_2 \le x_2 \le u_2,
\end{eqnarray}
where we set $a_1=5$, $b_1=7$, $a_2=7$, $b_2=8$, $\ell_1=2$, 
$\ell_2=2$, $u_1=10$, and $u_2=10$. Starting from initial point 
$\x^0=(5, 5)$, $\w^0=\z^0= (100, 100, 100, 100, 100)$, and
$\s^0= (0.01, 0.01, 0.01, 0.01, 0.01)$, after $69$ iterations,
we get $\x=(2,2)$ and the optimal value is $31.6355$.

\vspace{0.03in}
\noindent
{\bf Example 5} The objective function of this problem is a 
two dimensional quadratic-over-linear function.
\begin{eqnarray} \nonumber 
& \min & \frac{(a_1 x_1)^2}{a_2 x_2} 
\\ \nonumber 
& s.t. & x_1+x_2 \le 10  
\\ \label{prob5}
& & \ell_1 \le x_1 \le u_1
\\ \nonumber 
& &  \ell_2 \le x_2 \le u_2,
\end{eqnarray}
where we set $a_1=5$, $a_2=7$,  $\ell_1=1$, 
$\ell_2=3$, $u_1=10$, and $u_2=10$. Starting from initial point 
$\x^0=(5, 5)$, $\w^0=\z^0= (100, 100, 100, 100, 100)$, and
$\s^0= (0.01, 0.01, 0.01, 0.01, 0.01)$, after $57$ iterations,
we get $\x=(4.9271,5.0595)$ and the optimal value is $17.1360$.

\vspace{0.03in}
\noindent
{\bf Example 6} The objective function of this problem
is a two dimensional log-sum-exponential function.
\begin{eqnarray} \nonumber 
& \min & \log(a_1 e^{x_1}+ a_2 e^{x_2})
\\ \nonumber 
& s.t. & x_1+x_2 \le 10  
\\ \label{prob6}
& & \ell_1 \le x_1 \le u_1
\\ \nonumber 
& &  \ell_2 \le x_2 \le u_2,
\end{eqnarray}
where we set $a_1=5$, $a_2=7$, $\ell_1=3$, 
$\ell_2=1$, $u_1=10$, and $u_2=10$. Starting from initial point 
$\x^0=(5, 5)$, $\w^0=\z^0= (100, 100, 100, 100, 100)$, and
$\s^0= (0.01, 0.01, 0.01, 0.01, 0.01)$, after $56$ iterations,
we get $\x=(4.9924,4.9924)$ and the optimal value is $7.4773$.

\vspace{0.03in}
\noindent
{\bf Example 7} The objective function of this problem is 
a two dimensional geometric mean function.
\begin{eqnarray} \nonumber 
& \min & ( x_1 * x_2)^{1/2}
\\ \nonumber 
& s.t. & x_1+x_2 \le 10  
\\ \label{prob7}
& & \ell_1 \le x_1 \le u_1
\\ \nonumber 
& &  \ell_2 \le x_2 \le u_2,
\end{eqnarray}
where we set  $\ell_1=2$, 
$\ell_2=3$, $u_1=10$, and $u_2=10$. Starting from initial point 
$\x^0=(5, 5)$, $\w^0=\z^0= (100, 100, 100, 100, 100)$, and
$\s^0= (0.01, 0.01, 0.01, 0.01, 0.01)$, after $59$ iterations,
we get $\x=(2.0006,7.9767)$ and the optimal value is $3.9948$.

\vspace{0.03in}
\noindent
{\bf Example 8} The objective function of this problem is a two 
dimensional log-determinant function for positive definite matrix.
\begin{eqnarray} \nonumber 
& \min & -\log \det \left( \left[ \begin{array} {cc} 
x_1 &  x_2 \\ x_2 & x_3 \end{array} \right] \right)
\\ \nonumber 
& s.t. & x_1+x_2 \le 10  
\\ \nonumber 
&   & x_2+x_3 \le 10 
\\ \label{prob8}
& & \ell_1 \le x_1 \le u_1
\\ \nonumber 
& &  \ell_2 \le x_2 \le u_2,
\\ \nonumber 
& &  \ell_3 \le x_3 \le u_3,
\end{eqnarray}
where we set  $\ell_1=5$, $\ell_2=1$, $\ell_3=5$, $u_1=10$, 
$u_2=3$, and $u_3=10$. Starting from initial point $\x^0=(6, 2, 6)$,
$\w^0=\z^0= (100, 100, 100, 100, 100, 100, 100, 100)$, and
$\s^0= (0.01, 0.01, 0.01, 0.01, 0.01, 0.01, 0.01, 0.01)$, after $44$ iterations,
we get $\x=(5,3,5)$ and the optimal value is $-2.7726$.

\section{Conclusions}\label{sec:conclusion}

In this paper, we proposed an infeasible interior-point arc-search algorithm
for convex optimization problem with linear equality and inequality constraints.
Many application problems can be formulated as this optimization problem. 
We showed that this algorithm is convergent with a nice polynomial iteration
bound. To have a good performance, we provided analytic formulas for the
the arc-search, and developed an efficient algorithm to dynamically select
centering parameter and the step size at the same time in 
Appendix A. In the future, we may consider the general convex 
programming problem with convex monlinear inequality constraints.

\section{acknowledgements}
This author thanks the anonymous referees for their very detailed and 
constructive comments. The quality of the paper has been significantly 
improved through addressing these thoughtful comments.

%
% \section*{Conflict of interest}
%
% The authors declare that they have no conflict of interest.

\appendix

\section{Selection of the centering parameter $\sigma_k$ and step size $\alpha_k$}
\label{sec:selection}

Although the method of selecting $\alpha_k$ described in Sections 
\ref{sec:algorithm} and \ref{sec:convergence} assures that the 
algorithm converges in polynomial iteration, but this selection is very 
conservative. A better method is to simultaneously select centering
parameter $\sigma_k$ and step size $\alpha_k$ to maximize
the step size in every iteration.
The merit of this holistic strategy is proved in theory \cite{yang21}, 
and has been demonstrated in computational experiments 
\cite{yang17,yang18}. The same strategy is proposed in Step 5
of Algorithm \ref{mainAlgo1}, but there is no details provided there. 
In this appendix, we discuss how this strategy is implemented.
Although the formulas in this appendix are similar to the ones in
\cite{yang18}, they are different. To avoid the confusion and
implementation errors, we would like to list them in this appendix.
For the sake of completeness, we also provide the proofs even
though they follow the same ideas of \cite{yang18}.

Let the current iterate be $\v^k=(\x^k,\y^k,\w^k,\s^k,\z^k)$, 
$(\dot{\x},\dot{\y},\dot{\w},\dot{\s},\dot{\z})$ be computed 
by solving (\ref{firstOrder}), 
$(\p_{\x},\p_{\y},\p_{\w},\p_{\s},\p_{\z})$ be computed
by solving (\ref{secondOrderP}) and 
$(\q_{\x},\q_{\y},\q_{\w},\q_{\s},\q_{\z})$
be computed by solving (\ref{secondOrderQ}), $\phi_k$ and $\psi_k$
be computed by using (\ref{ijmath}) and (\ref{phikpsi}).
An intuition based on Propositions \ref{basic} and \ref{simple2} is that
the step size $\alpha_k$ should be chosen as large as possible 
provided that Condition (C4), (\ref{posi1}) and (\ref{posi2}) hold.
Given $\v^k=(\x^k,\y^k,\w^k,\s^k,\z^k)$, 
$(\dot{\x},\dot{\y},\dot{\w},\dot{\s},\dot{\z})$,
$(\p_{\x},\p_{\y},\p_{\w},\p_{\s},\p_{\z})$, 
$(\q_{\x},\q_{\y},\q_{\w},\q_{\s},\q_{\z})$,
$\phi_k$ and $\psi_k$, similar to the derivation of \cite{yang17},
the largest $\tilde{\alpha}$ that meet conditions (\ref{posi1}) and (\ref{posi2})
can be expressed as a function of $\sigma_k$. For each 
$i \in \lbrace 1,\ldots, n \rbrace$, given $\sigma$, we can select the largest
$\alpha_{s_i}$ such that for any $\alpha \in [0, \alpha_{s_i}]$, 
the $i$th inequality of (\ref{posi1}) holds, and the largest 
$\alpha_{z_i}$ such that for any $\alpha \in [0, \alpha_{z_i}]$ 
the $i$th inequality of (\ref{posi2}) holds. We then define 
\begin{eqnarray}
{\alpha^s}=\displaystyle\min_{i \in \lbrace 1,\ldots, n \rbrace}
\lbrace \alpha_{s_i}\rbrace, \label{alphaS}
\\
{\alpha^z}=\displaystyle\min_{i \in \lbrace 1,\ldots, n \rbrace}
\lbrace \alpha_{z_i} \rbrace,  \label{alphaZ}
\\
{\tilde{\alpha}}=\min \lbrace \alpha^s, \alpha^z \rbrace,
\label{alpha}
\end{eqnarray}
where $\alpha_{s_i}$ and $\alpha_{z_i}$ can be obtained, 
using a similar argument as in \cite{yang17}, in 
analytical forms represented by $\phi_k$, $\dot{s}_i$, 
$\ddot{s}_i=p_{s_i}\sigma+q_{s_i}$, $\psi_k$, $\dot{z}_i$, 
and $\ddot{z}_i=p_{z_i}\sigma+q_{z_i}$. 
First, from (\ref{posi1}), we have
\begin{equation}
s_i +\ddot{s}_i - \phi_k
\ge   \dot{s}_i\sin(\alpha)+\ddot{s}_i\cos(\alpha).
\label{alphai}
\end{equation}

\vspace{0.1in}

\noindent{\it Case 1a ($\dot{s}_i=0$ and $p_{s_i}\sigma+q_{s_i} \ne 0$)}:

In this case, if $\ddot{s}_i \ge -(s_i-\phi_k)$ and  
$\alpha \in [0, \frac{\pi}{2}]$, then $s_i(\alpha) \ge \phi_k$ follows from 
(\ref{posi1}). If $\ddot{s}_i \le -(s_i-\phi_k)$ or 
$s_i +\ddot{s}_i - \phi_k \le 0$, to meet (\ref{alphai}),
we must have 
$\cos(\alpha) \ge \frac{x_i +\ddot{x}_i-\phi_k}{\ddot{x}_i}$, or,
$\alpha \le \cos^{-1}\left( \frac{x_i +\ddot{x}_i-\phi_k}
{\ddot{x}_i} \right)$. Therefore,
\begin{equation}
\alpha_{s_i}(\sigma) = \left\{
\begin{array}{ll}
\frac{\pi}{2} & \quad \mbox{if $s_i +(p_{s_i}\sigma+q_{s_i})-\phi_k \ge 0 $} \\
\cos^{-1}\left( \frac{s_i -\phi_k +p_{s_i}\sigma+q_{s_i}}
{p_{s_i}\sigma+q_{s_i}} \right) & \quad 
\mbox{if $s_i +(p_{s_i}\sigma+q_{s_i})-\phi_k \le 0 $}.
\end{array}
\right.
\label{case1a}
\end{equation}
\noindent{\it Case 2a ($p_{s_i}\sigma+q_{s_i}=0$ and $\dot{s}_i\ne 0$)}:

In this case, if $\dot{s}_i \le s_i -\phi_k$ and $\alpha \in [0, \frac{\pi}{2}]$, 
then $s_i(\alpha) \ge  \phi_k$ follows from (\ref{posi1}).
If $\dot{s}_i \ge s_i -\phi_k$,  to meet (\ref{alphai}),
we must have 
$\sin(\alpha) \le \frac{s_i  -\phi_k}{\dot{s}_i}$, or
$\alpha \le \sin^{-1}\left( \frac{s_i  -\phi_k}
{\dot{s}_i} \right)$. Therefore,
\begin{equation}
\alpha_{s_i}(\sigma)  = \left\{
\begin{array}{ll}
\frac{\pi}{2} & \quad \mbox{if $\dot{s}_i \le s_i -\phi_k $} \\
\sin^{-1}\left( \frac{s_i-\phi_k }
{\dot{s}_i} \right) & \quad \mbox{if $\dot{s}_i \ge s_i -\phi_k $}
\end{array}
\right.
\label{case2a}
\end{equation}
\noindent{\it Case 3a ($\dot{s}_i>0$ and $p_{s_i}\sigma+q_{s_i}>0$)}:

Let $\dot{s}_i=\sqrt{\dot{s}_i^2+\ddot{s}_i^2}\cos(\beta)$, and
$\ddot{s}_i=\sqrt{\dot{s}_i^2+\ddot{s}_i^2}\sin(\beta)$, (\ref{alphai})
can be rewritten as 
\begin{equation}
s_i + \ddot{s}_i  -\phi_k \ge \sqrt{\dot{s}_i^2+\ddot{s}_i^2}
\sin(\alpha + \beta),
\label{alphai1}
\end{equation}
where 
\begin{equation}
\beta = \sin^{-1} \left( \frac{\ddot{s}_i }
{\sqrt{\dot{s}_i^2+\ddot{s}_i^2}} \right)
= \sin^{-1}\left( \frac{p_{s_i}\sigma+q_{s_i} } 
{\sqrt{\dot{s}_i^2+(p_{s_i}\sigma+q_{s_i})^2}} \right).
\label{beta1}
\end{equation}
If $\ddot{s}_i + s_i  -\phi_k\ge \sqrt{\dot{s}_i^2+\ddot{s}_i^2}$ 
and $\alpha \in [0, \frac{\pi}{2}]$, then $s_i(\alpha) \ge  \phi_k$ 
follows from (\ref{posi1}).
If $\ddot{s}_i + s_i -\phi_k \le \sqrt{\dot{s}_i^2+\ddot{s}_i^2}$,  
to meet (\ref{alphai1}), we must have 
$\sin(\alpha + \beta) \le \frac{s_i + \ddot{s}_i -\phi_k}
{\sqrt{\dot{s}_i^2+\ddot{s}_i^2}}$, or
$\alpha + \beta \le \sin^{-1}\left( \frac{s_i + \ddot{s}_i  -\phi_k}
{\sqrt{\dot{s}_i^2+\ddot{s}_i^2}} \right)$. Therefore,
\begin{equation}
\alpha_{s_i}(\sigma)  = \left\{
\begin{array}{ll}
\frac{\pi}{2} & \quad \mbox{if $s_i-\phi_k + p_{s_i}\sigma+q_{s_i} \ge 
\sqrt{\dot{s}_i^2+(p_{s_i}\sigma+q_{s_i})^2}$} \\
\sin^{-1}\left( \frac{s_i -\phi_k + p_{s_i}\sigma+q_{s_i} }
{\sqrt{\dot{s}_i^2+(p_{s_i}\sigma+q_{s_i})^2}} \right) 
- \beta & \quad 
\mbox{if $s_i -\phi_k + p_{s_i}\sigma+q_{s_i} \le 
\sqrt{\dot{s}_i^2+(p_{s_i}\sigma+q_{s_i})^2}$}
\end{array}
\right.
\label{case3a}
\end{equation}
\noindent{\it Case 4a ($\dot{s}_i>0$ and $p_{s_i}\sigma+q_{s_i}<0$)}:

Let $\dot{s}_i=\sqrt{\dot{s}_i^2+\ddot{s}_i^2}\cos(\beta)$, and
$\ddot{s}_i=-\sqrt{\dot{s}_i^2+\ddot{s}_i^2}\sin(\beta)$, (\ref{alphai})
can be rewritten as 
\begin{equation}
s_i + \ddot{s}_i -\phi_k  \ge \sqrt{\dot{s}_i^2+\ddot{s}_i^2}
\sin(\alpha - \beta),
\label{alphai2}
\end{equation}
where 
\begin{equation}
\beta = \sin^{-1} \left( \frac{-\ddot{s}_i }
{\sqrt{\dot{s}_i^2+\ddot{s}_i^2}} \right)
= \sin^{-1}\left( \frac{-(p_{s_i}\sigma+q_{s_i} )}
{\sqrt{\dot{s}_i^2+(p_{s_i}\sigma+q_{s_i})^2}} \right).
\label{beta2}
\end{equation}
If $\ddot{s}_i + s_i -\phi_k \ge \sqrt{\dot{s}_i^2+\ddot{s}_i^2}$ and 
$\alpha \in [0, \frac{\pi}{2}]$, then $s_i(\alpha) \ge \phi_k$ 
follows from (\ref{posi1}).
If $\ddot{s}_i + s_i -\phi_k \le \sqrt{\dot{s}_i^2+\ddot{s}_i^2}$,  
to meet (\ref{alphai2}), we must have 
$\sin(\alpha - \beta) \le \frac{s_i + \ddot{s}_i}
{\sqrt{\dot{s}_i^2+\ddot{s}_i^2}}$, or
$\alpha - \beta \le \sin^{-1}\left( \frac{s_i + \ddot{s}_i }
{\sqrt{\dot{s}_i^2+\ddot{s}_i^2}} \right)$. Therefore,
\begin{equation}
\alpha_{s_i}(\sigma)  = \left\{
\begin{array}{ll}
\frac{\pi}{2} & \quad \mbox{if $s_i -\phi_k + p_{s_i}\sigma+q_{s_i} \ge 
\sqrt{\dot{s}_i^2+(p_{s_i}\sigma+q_{s_i})^2}$} \\
\sin^{-1}\left( \frac{s_i-\phi_k + p_{s_i}\sigma+q_{s_i} }
{\sqrt{\dot{s}_i^2+(p_{s_i}\sigma+q_{s_i})^2}} \right) 
+ \beta & \quad 
\mbox{if $s_i-\phi_k + p_{s_i}\sigma+q_{s_i} \le 
\sqrt{\dot{s}_i^2+(p_{s_i}\sigma+q_{s_i})^2}$}
\end{array}
\right.
\label{case4a}
\end{equation}
\noindent{\it Case 5a ($\dot{s}_i<0$ and $p_{s_i}\sigma+q_{s_i}<0$)}:

Let $\dot{s}_i=-\sqrt{\dot{s}_i^2+\ddot{s}_i^2}\cos(\beta)$, and
$\ddot{s}_i=-\sqrt{\dot{s}_i^2+\ddot{s}_i^2}\sin(\beta)$, (\ref{alphai})
can be rewritten as 
\begin{equation}
s_i + \ddot{s}_i  -\phi_k \ge -\sqrt{\dot{s}_i^2+\ddot{s}_i^2}
\sin(\alpha +\beta),
\label{alphai4}
\end{equation}
where 
\begin{equation}
\beta = \sin^{-1} \left( \frac{-\ddot{s}_i }
{\sqrt{\dot{s}_i^2+\ddot{s}_i^2}} \right)
= \sin^{-1}\left( \frac{-(p_{s_i}\sigma+q_{s_i}) }
{\sqrt{\dot{s}_i^2+(p_{s_i}\sigma+q_{s_i})^2}} \right).
\label{beta3}
\end{equation}
If $\ddot{s}_i + s_i -\phi_k \ge 0$ and 
$\alpha \in [0, \frac{\pi}{2}]$, then $s_i(\alpha) \ge  \phi_k $ 
follows from (\ref{posi1}). If $\ddot{s}_i + s_i -\phi_k \le 0$,  
to meet (\ref{alphai4}), we must have 
$\sin(\alpha + \beta) \ge \frac{-(s_i + \ddot{s}_i -\phi_k)}
{\sqrt{\dot{s}_i^2+\ddot{s}_i^2}}$, or
$\alpha + \beta \le \pi - \sin^{-1} \left( \frac{-(s_i + 
\ddot{s}_i -\phi_k)}{\sqrt{\dot{s}_i^2+\ddot{s}_i^2}} \right)$. 
Therefore,
\begin{equation}
\alpha_{s_i}(\sigma)  = \left\{
\begin{array}{ll}
\frac{\pi}{2} & \quad \mbox{if $s_i-\phi_k + p_{s_i}\sigma+q_{s_i}\ge 0$} \\
\pi - \sin^{-1} \left( \frac{-(s_i -\phi_k + p_{s_i}\sigma+q_{s_i}) }
{\sqrt{\dot{s}_i^2+(p_{s_i}\sigma+q_{s_i})^2}} \right) 
- \beta 
& \quad \mbox{if $s_i-\phi_k + p_{s_i}\sigma+q_{s_i} \le 0$}
\end{array}
\right.
\label{case5a}
\end{equation}
\noindent{\it Case 6a ($\dot{s}_i<0$ and $p_{s_i}\sigma+q_{s_i}>0$)}:

\begin{equation}
\alpha_{s_i}(\sigma)  = \frac{\pi}{2}.
\end{equation}
\noindent{\it Case 7a ($\dot{s}_i=0$ and $p_{s_i}\sigma+q_{s_i}=0$)}:

\begin{equation}
\alpha_{s_i}(\sigma)  = \frac{\pi}{2}.
\end{equation}

Using the same idea, we can obtain the similar formulas for 
$\alpha_{z_i}(\sigma)$.

\vspace{0.05in}
\noindent{\it Case 1b ($\dot{z}_i=0$, $p_{z_i}\sigma+q_{z_i} \ne 0$)}:

\begin{equation}
\alpha_{z_i}(\sigma)  = \left\{
\begin{array}{ll}
\frac{\pi}{2} & \quad \mbox{if $z_i-\psi_k +p_{z_i}\sigma+q_{z_i} \ge 0 $} \\
\cos^{-1}\left( \frac{z_i -\psi_k +p_{z_i}\sigma+q_{z_i}}
{p_{z_i}\sigma+q_{z_i}} \right) & \quad 
\mbox{if $z_i-\psi_k +p_{z_i}\sigma+q_{z_i} \le 0 $}.
\end{array}
\right.
\label{case1b}
\end{equation}
\noindent{\it Case 2b ($p_{z_i}\sigma+q_{z_i}=0$ and $\dot{z}_i \ne 0$)}:

\begin{equation}
\alpha_{z_i}(\sigma)  = \left\{
\begin{array}{ll}
\frac{\pi}{2} & \quad \mbox{if $\dot{z}_i \le z_i -\psi_k $} \\
\sin^{-1}\left( \frac{z_i -\psi_k }
{\dot{z}_i} \right) & \quad \mbox{if $\dot{z}_i \ge z_i -\psi_k $}
\end{array}
\right.
\label{case2b}
\end{equation}
\noindent{\it Case 3b ($\dot{z}_i>0$ and $p_{z_i}\sigma+q_{z_i}>0$)}:

Let 
\begin{equation}
\beta = \sin^{-1}\left( \frac{p_{z_i}\sigma+q_{z_i}} 
{\sqrt{\dot{z}_i^2+(p_{z_i}\sigma+q_{z_i})^2}} \right).
\label{beta1b}
\end{equation}
\begin{equation}
\alpha_{z_i}(\sigma)  = \left\{
\begin{array}{ll}
\frac{\pi}{2} & \quad \mbox{if $z_i -\psi_k + p_{z_i}\sigma+q_{z_i} \ge 
\sqrt{\dot{z}_i^2+(p_{z_i}\sigma+q_{z_i})^2}$} \\
\sin^{-1}\left( \frac{z_i -\psi_k + p_{z_i}\sigma+q_{z_i} }
{\sqrt{\dot{z}_i^2+(p_{z_i}\sigma+q_{z_i})^2}} \right)
- \beta & \quad 
\mbox{if $z_i -\psi_k + p_{z_i}\sigma+q_{z_i} < 
\sqrt{\dot{z}_i^2+(p_{z_i}\sigma+q_{z_i})^2}$}
\end{array}
\right.
\label{case3b}
\end{equation}
\noindent{\it Case 4b ($\dot{z}_i>0$ and $p_{z_i}\sigma+q_{z_i}<0$)}:

Let 
\begin{equation}
\beta = \sin^{-1}\left( \frac{-(p_{z_i}\sigma+q_{z_i})}
{\sqrt{\dot{z}_i^2+(p_{z_i}\sigma+q_{z_i})^2}} \right).
\label{beta2b}
\end{equation}
\begin{equation}
\alpha_{z_i}(\sigma)  = \left\{
\begin{array}{ll}
\frac{\pi}{2} & \quad \mbox{if $z_i -\psi_k + p_{z_i}\sigma+q_{z_i} \ge 
\sqrt{\dot{z}_i^2+(p_{z_i}\sigma+q_{z_i})^2}$} \\
\sin^{-1}\left( \frac{z_i -\psi_k + p_{z_i}\sigma+q_{z_i}}
{\sqrt{\dot{z}_i^2+(p_{z_i}\sigma+q_{z_i})^2}} \right) 
+ \beta & \quad 
\mbox{if $z_i-\psi_k + p_{z_i}\sigma+q_{z_i} \le 
\sqrt{\dot{z}_i^2+(p_{z_i}\sigma+q_{z_i})^2}$}
\end{array}
\right.
\label{case4b}
\end{equation}
\noindent{\it Case 5b ($\dot{z}_i<0$ and $p_{z_i}\sigma+q_{z_i}<0$)}:

Let 
\begin{equation}
\beta = \sin^{-1}\left( \frac{-(p_{z_i}\sigma+q_{z_i}) }
{\sqrt{\dot{z}_i^2+(p_{z_i}\sigma+q_{z_i})^2}} \right).
\label{beta3b}
\end{equation}
\begin{equation}
\alpha_{z_i}(\sigma)  = \left\{
\begin{array}{ll}
\frac{\pi}{2} & \quad \mbox{if $z_i-\psi_k + p_{z_i}\sigma+q_{z_i} \ge 0$} \\
\pi - \sin^{-1} \left( \frac{-(z_i -\psi_k + p_{z_i}\sigma+q_{z_i}) }
{\sqrt{\dot{z}_i^2+(p_{z_i}\sigma+q_{z_i})^2}} \right) 
- \beta & \quad \mbox{if $z_i -\psi_k + p_{z_i}\sigma+q_{z_i} \le 0$}
\end{array}
\right.
\label{case5b}
\end{equation}
\noindent{\it Case 6b ($\dot{z}_i<0$ and $p_{z_i}\sigma+q_{z_i}>0$)}:

\begin{equation}
\alpha_{z_i}(\sigma)  = \frac{\pi}{2}.
\end{equation}
\noindent{\it Case 7b ($\dot{z}_i=0$ and $p_{z_i}\sigma+q_{z_i}=0$)}:

\begin{equation}
\alpha_{z_i}(\sigma)  = \frac{\pi}{2}.
\label{case7b}
\end{equation}

Using this analytic formulas, our strategy to reduce the duality gap 
is to simultaneously select $\alpha_k$ and $\sigma_k$ by an 
iterative method similar to the idea of \cite{yang18}.
This is implemented as follows: in every iteration $k$, given fixed
$\phi_k$, $\psi_k$, $\dot{\s}$, $\dot{\z}$, $\p_{\s}$, $\p_{\z}$, 
$\q_{\s}$ and $\q_{\z}$, several different values of $\sigma$ are 
tried to find the best $\sigma_k$ for the maximum of $\tilde{\alpha}$.
Therefore, we will find a $\sigma_k$ which maximizes the step size
$\tilde{\alpha}$, i.e.,
\begin{equation}
\displaystyle\max_{\sigma \in [\sigma_{\min},\sigma_{\max}]} 
\hspace{0.05in}
\displaystyle\min_{i \in \{ 1, \ldots,n\} }
\{ \alpha_{s_i}(\sigma), \alpha_{z_i}(\sigma) \},
\label{maxmin}
\end{equation}
where $0 \le \sigma_{\min} < \sigma_{\max} \le 1$,
$\alpha_{s_i}(\sigma)$ and $\alpha_{z_i}(\sigma)$ are calculated 
using (\ref{case1a})-(\ref{case7b}) for 
$\sigma \in [\sigma_{\min},\sigma_{\max}]$. 
Problem (\ref{maxmin}) has no regularity conditions involving derivatives. 
Golden section search for variable $\sigma$ \cite{ekefer53} 
seems to be an appropriate method for solving this 
problem. Noting the fact from (\ref{posi1}) that 
$\alpha_{s_i}({\sigma})$ is a monotonic increasing function of 
$\sigma$ if $p_{s_i}>0$ and $\alpha_{s_i}({\sigma})$ is a 
monotonic decreasing function of $\sigma$ if $p_{s_i}<0$ 
(and similar properties hold for $\alpha_{z_i}(\sigma)$),
we can use the condition
\begin{equation}
\min \{ \displaystyle \min_{\{i \in p_{s_i}<0\}} \alpha_{s_i}(\sigma),
\displaystyle \min_{\{i \in p_{z_i}<0\}} \alpha_{z_i}(\sigma) \}
>
\min \{ \displaystyle \min_{\{i \in p_{s_i}>0\}} \alpha_{s_i}(\sigma),
\displaystyle \min_{\{i \in p_{z_i}>0\}} \alpha_{z_i}(\sigma) \},
\label{determ}
\end{equation} 
and the following bisection search for variable $\sigma$ to solve (\ref{maxmin}).

\begin{algorithm}  {\bf (bisection search devised for solving (\ref{maxmin})
)} \newline
\upshape 
Data: $(\dot{x},\dot{s})$, $(p_x, p_s )$, $(q_x, q_s)$, $(x^k,s^k)$,
$\phi_k$, and $\psi_k$. {\ } \\
Parameter: $\epsilon \in (0,1)$, $\sigma_{lb}=\sigma_{\min}$,
$\sigma_{ub}=\sigma_{\max} \le 1$. {\ } \\
{\bf for} iteration $k=0,1,2,\ldots$
\begin{itemize}
\item[] Step 0: If $\sigma_{ub}-\sigma_{lb} \le \epsilon$, set
%$\sigma_k=\sigma$ and 
$\alpha=\displaystyle\min_{i \in \{ 1, \ldots,n\} }
\{ \alpha_{x_i}(\sigma), \alpha_{s_i}(\sigma) \}$, stop.
\item[] Step 1: Set $\sigma=\sigma_{lb}+0.5(\sigma_{ub}-\sigma_{lb})$.
\item[] Step 2: Calculate $\alpha_{x_i}(\sigma)$ and 
$\alpha_{s_i}(\sigma)$ using (\ref{case1a})-(\ref{case7b}).
\item[] Step 3: If (\ref{determ}) holds, set $\sigma_{lb}=\sigma$, 
otherwise, set $\sigma_{ub}=\sigma$.
\item[] Step 4: Set $k+1 \rightarrow k$. Go back to Step 1.
\end{itemize}
{\bf end (for)} 
\hfill \qed
\label{bisectionSearch}
\end{algorithm}

This algorithm reduces interval length by $0.5$ in every iteration while 
golden section method reduces interval length by $0.618$. The bisection
is more efficient.

In view of Proposition \ref{simple2}, if 
$(\dot{\s}^{\T} {\p}_{\z}+\dot{\z}^{\T} {\p}_{\s}) < 0$,
to minimize $\mu_{k+1}$, we should select
$\sigma_k =0$. Therefore, Problem (\ref{maxmin}) is reduced to
solve a much simpler problem
\begin{equation}
\tilde{\alpha}= \min_{\alpha} b_u(\alpha).
\label{smaller0}
\end{equation}
This is a one-dimensional unconstrained optimization problem
that can be solved by many existing methods, such as golden
section method.
Given $\tilde{\alpha}$, we still need to find the largest
$\alpha_k \in (0, \tilde{\alpha}]$ such that Condition (C4) holds. 
We summarize the algorithm described above as follows:

\begin{algorithm}  {\bf (bisection search devised for Step 5 of Algorithm \ref{mainAlgo1})
)} \newline
\upshape 
Data: $(\dot{\x},\dot{\s})$, $(\p_{\x}, \p_{\s} )$, $(\q_{\x}, \q_{\s})$, 
$(\x^k,\s^k)$, $\phi_k$, and $\psi_k$. {\ } \\
Parameter: $\epsilon \in (0,1)$. {\ } \\
\indent Step 1: If $(\dot{\s}^{\T} {\p}_{\z}+\dot{\z}^{\T} {\p}_{\s}) < 0$,
set $\sigma_k =0$, solve (\ref{smaller0}) to get $\tilde{\alpha}$. \\
\indent Step 2: Otherwise, call Algorithm \ref{bisectionSearch} to 
get $\tilde{\alpha}$. \\
\indent Step 3: Find the largest $\alpha_k \in (0, \tilde{\alpha}]$
such that Condition (C4) holds.
\hfill \qed
\label{Step5}
\end{algorithm}

\end{document}